\documentclass[review]{siamart220329}   

\usepackage{siampaper}

\usepackage[hyperpageref]{backref}
\renewcommand*{\backref}[1]{}
\renewcommand*{\backrefalt}[4]{%
\ifcase #1 %
No citations.%
\or
\marginpar{\tiny cited on p. #2}%
\else
\marginpar{\tiny cited on pp. #2}%
\fi
}

\makeatletter
\AtBeginDocument{
  \def\refstepcounter@optarg[#1]#2{%
    \cref@old@refstepcounter{#2}%
    \if\relax\detokenize{#1}\relax
    \else
    \cref@constructprefix{#2}{\cref@result}%
    \@ifundefined{cref@#1@alias}%
    {\def\@tempa{#1}}%
    {\def\@tempa{\csname cref@#1@alias\endcsname}}%
    \protected@edef\cref@currentlabel{%
      [\@tempa][\arabic{#2}][\cref@result]%
    \csname p@#2\endcsname\csname the#2\endcsname}%
\fi}}
\makeatother

\crefname{algorithm}{Algorithm}{Algorithms}
\Crefname{algorithm}{Algorithm}{Algorithms}

\newcommand{\PCG}{^{\text{\tiny PCG}}}
\newcommand{\BFGS}{^{\text{\tiny BFGS}}}
\newcommand{\DFP}{^{\text{\tiny DFP}}}
\newcommand{\SRI}{^{\text{\tiny SR1}}}
\newcommand{\FOM}{^{\text{\tiny FOM}}}
\newcommand{\DIOM}{^{\text{\tiny DIOM}}}

\newcommand{\cahiernumber}{26}  

\title{
  Quasi-Newton and Krylov Methods for the Solution of Nonconvex Trust-Region Subproblems%
}

\author{
  Johann Bourhis%
  \thanks{%
    National Institute of Applied Sciences of Rennes, France.
    E-mail: \mailto{johann.bourhis@insa-rennes.fr}
  }
  \and
  Oihan Cordelier%
  \thanks{%
    GERAD and Department of Mathematics and Industrial Engineering,
    \'Ecole Polytechnique, Montr\'eal, QC, Canada.
    E-mail: \mailto{oihan.cordelier@polymtl.ca}.
  }
  \and
  Jean-Pierre Dussault%
  \thanks{%
    Department of Mathematics,
    Universit\'e de Sherbrooke, QC, Canada.
    E-mail: \mailto{jean-Pierre.Dussault@USherbrooke.ca}.
    Research partially supported by an NSERC Discovery Grant.
  }
  \and
  Oussama~Mouhtal%
  \thanks{%
    GERAD and Department of Mathematics and Industrial Engineering,
    \'Ecole Polytechnique, Montr\'eal, QC, Canada.
    E-mail: \mailto{oussama-2.mouhtal@polymtl.ca}.
  }
  \and
  Dominique Orban%
  \thanks{%
    GERAD and Department of Mathematics and Industrial Engineering,
    \'Ecole Polytechnique, Montr\'eal, QC, Canada.
    E-mail: \mailto{dominique.orban@gerad.ca}.
    Research partially supported by an NSERC Discovery Grant.
  }
}
\date{\today}

\begin{document}

\maketitle

\thispagestyle{firstpage}
\pagestyle{myheadings}

\begin{abstract}
  We study the solution of symmetric positive-definite linear systems by way of families of full- and limited-memory methods.
  Our contributions are threefold.
  We first derive new relationships between the conjugate-gradient method (CG) and quasi-Newton methods of the Broyden class that refine existing results, and clarify when those methods generate the same iterates and enjoy quadratic termination.
  We extend this perspective to the limited-memory BFGS (LBFGS) method.
  Next, we examine how DIOM, a limited-memory variant of the full orthogonalization Krylov method (FOM), is akin to LBFGS in that it provides a memory lever that is critical in practical performance.
  Finally, we generalize LBFGS and DIOM to the computation of trust-region steps for unconstrained, potentially nonconvex, optimization.
  We report numerical experience on positive-definite linear systems and unconstrained optimization problems.
  The results show that memory is a key algorithmic lever: LBFGS and DIOM are consistently more robust than CG and often achieve comparable accuracy with fewer Hessian-vector products.
  They emerge as viable alternatives to CG when high accuracy is desirable or when operations with the Hessian are at a premium.
  The limited-memory SR1 (LSR1) method can be competitive in full-memory form, but its limited-memory variant suffers from discarded curvature information.
\end{abstract}

\begin{keywords}
  Broyden Class,
  Quasi-Newton Method,
  Conjugate-Gradient Method,
  CG,
  Truncated Conjugate-Gradient Method,
  Limited-Memory quasi-Newton Method,
  LBFGS,
  LSR1,
  Full Orthogonalization Method,
  FOM,
  Direct Incomplete Orthogonalization Method,
  DIOM,
  Truncated Direct Incomplete Orthogonalization Method,
  Trust-Region Algorithm
\end{keywords}

\begin{AMS}
  15A06,  
  65F10,  
  65F22,  
  65F25,  
  90C06,  
  90C20,  
  90C30,  
  90C53,  
\end{AMS}

\section{Introduction}
\phantomsection

We consider the unconstrained optimization problem
\begin{equation}
  \label{P}
  \minimize{z \in \R^n} f(z),
\end{equation}
where $f$ : $\R^n \rightarrow \R$ is twice continuously differentiable.
Newton's method globalized by way of a line-search or trust-region mechanism is among the most widely used techniques to identify a stationary point of~\eqref{P}, and computes a step by approximately minimizing a quadratic model of \(f\) about the current iterate.
A popular way to compute a trust-region step is to employ the truncated conjugate-gradient (CG) method of \citet{steihaug-1983}---a variant of the original method of \citet{hestenes-stiefel-1952} with safeguards to take the trust region and potential negative curvature into account.
A similar procedure, also based on CG, exists for line-search methods \citep{dembo-steihaug-1983}.
One way to derive CG is as a three-term recurrence method based on the \citet{lanczos-1950} process.
CG may suffer from loss of orthogonality, and, though theory predicts that it terminates in at most \(n\) iterations, may require many more to converge in practice.
We exploit relationships between CG and quasi-Newton methods of the \citet{broyden-1970} class for optimization, on the one hand, and between CG and Krylov methods based on the \citet{arnoldi-1951} process on the other, to devise iterative methods that are less sensitive to loss of orthogonality.

Specifically, we investigate the use of quasi-Newton, limited-memory quasi-Newton methods and limited-memory Krylov methods to approximately minimize the quadratic
\begin{equation}\label{quadratic}
  q(x) = \tfrac{1}{2} x^T A x - b^T x + c,
\end{equation}
where \(A = A^T \in \R^{n\times n}\), $b \in \R^{n}$ and $c\in \R$.
When $A$ is positive definite (PD), $q$ is strictly convex and~\eqref{quadratic} admits a unique solution.
CG computes the minimum of a strictly convex quadratic function in at most $n$ steps in exact artithmetic \citep{hestenes-stiefel-1952}, a property called quadratic termination.
We provide new relationships between CG and quasi-Newton methods of the Broyden class applied to the minimization of a strictly convex quadratic that generalize the well-known statement of \citet{broyden-1970}: Broyden-class methods generate the same iterates as CG whenever the quasi-Newton update is positive definite and consequently enjoy quadratic termination.

We focus our numerical study on two specific methods: BFGS, one of the most widely-used Broyden-class quasi-Newton methods, and the full-orthogonalization method (FOM), which is similar in spirit to CG, but is based on the \citet{arnoldi-1951} process.
Both have large memory requirements.
Thus, for practical purposes, we investigate limited-memory variants.
We establish that the limited-memory BFGS method (LBFGS) also generates the same iterates as CG in exact arithmetic when applied to a strictly convex quadratic, as does the direct incomplete orthogonalization method (DIOM), a limited-memory variant of FOM.

We report numerical results on ill-conditioned PD systems that show the advantage of LBFGS and DIOM over CG in the presence of loss of orthogonality. We compare LSR1 with CG on the same systems and highlight the severe limitations of LSR1 when memory is low.

We generalize our findings to potentially indefinite systems by devising truncated variants of LBFGS and DIOM, i.e., variants that account for a trust-region constraint; a constraint that confines the iterates to a Euclidean ball centered at the origin.
We establish that the truncated variants are appropriate to compute a step as part of a trust-region method for unconstrained, potentially nonconvex, optimization.

Our experiments on nonlinear optimization problems, namely data-assimilation and binary classification problems, further highlight the benefits of limited-memory variants. In particular, LBFGS and DIOM often require fewer Hessian-vector products than CG and, also reduce overall runtime.

\subsection*{Related research}

\citet{dixon-1972} showed that, for any differentiable \(f: \R^n \to \R\), any two methods in the Broyden class generate parallel descent directions when using a \emph{perfect}\footnote{A line search is perfect if it is exact and guaranteed to locate the nearest local minimum along the search direction.} line search, provided that the Hessian approximation remains nonsingular.
Consequently, they all generate the same iterates.

If, in addition, the objective is a strictly convex quadratic, the results of \citet{dixon-1972} can be strengthened in several ways.
We summarize a few of them relevant here. 
\citet{broyden-1970} showed that these quasi-Newton methods generate the same iterates as the conjugate gradient method of \citet{hestenes-stiefel-1952}.
\citet[\S\(2\)]{nazareth-1979} showed that the BFGS method with exact line search not only generates the same iterates as CG, but also the same search directions and step lengths.
\citet[Proposition~\(1\)]{forsgren-odland-2018} provide necessary and sufficient conditions for a quasi-Newton method with exact line search, not necessarily of the Broyden type and not necessarily based on the secant equation, to generate search directions parallel to those of CG.

Attention also turned to limited-memory methods.
\citet{shanno-1978} showed that, under an exact line search, nonlinear CG can be interpreted as a memoryless BFGS update in the sense that if the Hessian approximation is reset to the identity at each iteration, both methods generate the same iterates.
His results specialize to the case of a strictly convex quadratic with exact line search, and show that the CG iterates coincide with those of the memoryless BFGS method.
\citet{ek-forsgren-2021} introduce a class of limited-memory quasi-Newton Hessian approximations that generate search directions parallel to those of CG and BFGS.
However, their results do not appear to cover the limited-memory BFGS method.

\subsection*{Contributions}

Our main message is that numerous quasi-Newton methods and certain Krylov methods coincide with CG in exact, but not in floating-point, arithmetic.
Using those methods, it is possible to increase the robustness and efficiency of CG in exchange for extra storage.
Specifically,

\begin{enumerate}
  \item As long as the CG iterations are well defined, and a Broyden method does not encounter zero curvature and does not generate a singular approximation, we provide a recurrence for the coefficient of proportionality between its search direction and that of CG (\Cref{theorem-1}).
  The result includes methods that generate potentially indefinite approximations, even if \(A\) is positive definite, and, in particular, the symmetric rank~\(1\) method.
  The result generalizes one of \citet{broyden-1970}, who restricted the Broyden parameter to nonnegative values.
  \item A characterization of the coefficients of proportionality and step lengths in methods of the \emph{convex} Broyden class as a function of the Broyden parameter (\Cref{prop:optimal-step-lenghts}).
  This result shows how step lengths generated by a method in the convex class behave in an orderly manner, whereas those generated by a method that is not in the convex class, such as SR1, can behave erratically.
  \item As long as the CG iterations are well defined, the directions and step lengths generated by the limited-memory BFGS method (LBFGS) with \emph{any} memory coincide with those generated by CG.
  This result was previously thought to hold only for memory \(1\) \citep{shanno-1978}.
  \item We describe how the FOM Krylov method based on the \citet{arnoldi-1951} process also coincides with CG when applied to~\eqref{quadratic} for as long as the CG iterates are well defined.
  FOM has a limited-memory variant named DIOM, which also coincide with CG for \emph{any} value of the memory parameter.
  \item We describe how nonpositive curvature can be detected efficiently in LBFGS and DIOM so those methods can be used to solve trust-region subproblems for potentially nonconvex optimization.
  \item We provide accompanying implementations of all the methods described.
\end{enumerate}

\subsection*{Notation}

Matrices are denoted by uppercase Latin letters, vectors by lowercase Latin letters and scalars by Greek letters.
By convention, all vectors are column vectors.
Throughout, \(x_k\) denotes an iterate of a method used to minimize~\eqref{quadratic}, $s_k := x_{k+1} - x_k$ is the difference between two consecutive iterates, $g_k := \nabla q(x_k) = A x_k - b$, and $y_k := g_{k+1} - g_k = A s_k.$ to denote the difference between two consecutive gradients.
If \(A = A^T \in \R^{n \times n}\), we write \(A \succ 0\) to indicate that \(A\) is positive  definite (PD).
We use the shorthand SPD for \emph{symmetric and positive definite}.
The $i$th component of a vector $z \in \R^n$, for $i=1,\ldots,n$, is denoted $(z)_i$.
All results below are derived under the assumption of exact arithmetic.

\section{Background}

\subsection{Exact line search methods for quadratic functions}
\phantomsection  

If $d_k$ is a descent direction for $q$ in~\eqref{quadratic} from $x_k$, i.e., $g_k^T d_k < 0$, performing an exact line search consists in finding a step length
\begin{equation}\label{linesearch}
  \alpha_k \in \argmin{\alpha \geq 0} \ q(x_k + \alpha d_k).
\end{equation}
The current iterate $x_k$ is subsequently updated according to
\[
    s_k = \alpha_k d_k, \quad x_{k+1} = x_k + s_k.
\]
Since $q$ is quadratic~\eqref{quadratic}, if $A$ is SPD, the unique solution to~\eqref{linesearch} is
\begin{equation}\label{exact-linesearch}
  \alpha_k = -g_k^T d_k / d_k^T A d_k.
\end{equation}
Note that if $A$ is negative definite,~\eqref{exact-linesearch} yields the global maximum of $q$ from $x_k$ in the direction $d_k$, while if $A$ is indefinite and $d_k^T A d_k \neq 0$,~\eqref{exact-linesearch} yields a stationary point.

\subsection{Methods of the Broyden class}

Secant quasi-Newton methods generate descent directions by maintaining an approximation \(H_k = H_k^T\) such that  \(H_k^{-1} \approx \nabla^2 q(x_k) = A\) that is updated to satisfy the secant equation
\begin{equation}
  \label{eq:secant}
  H_{k+1} y_k = s_k.
\end{equation}
Whenever \(s_k^T y_k \neq 0\) and \(y_k^T H_k y_k \neq 0\), the Broyden family of updates is given by
\begin{subequations}\label{broyden-update}
  \label{broyden-family}
  \begin{align}
    H_{k+1} & = H_k + \frac{s_k s_k^T}{s_k^T y_k} - \frac{H_k y_k y_k^T H_k}{y_k^T H_k y_k} + \phi_k \, (y_k^T H_k y_k) (v_k v_k^T),
    \\
    v_k & = \frac{s_k}{s_k^T y_k} - \frac{H_k y_k}{y_k^T H_k y_k},
    \label{vk}
  \end{align}
\end{subequations}
where $\phi_k \in \R$ is a parameter~\citep{denis-more-1977}.  
If we let $\rho_k := 1 / s_k^T y_k$, the BFGS \citep{broyden1970convergence, fletcher1970new, goldfarb1970family, shanno1970conditioning}
and the DFP \citep{davidon1991variable, fletcher1963rapidly}
formulae result, respectively, from $\phi_k = 1$ and $\phi_k = 0$:
\begin{align}\label{BFGS}
  H_{k+1}\BFGS &= (I-\rho_k s_k y_k^T)H_k(I-\rho_k y_k s_k^T) + \rho_k s_k s_k^T,\\
  H_{k+1}\DFP &= H_k - \frac{H_k y_k y_k^T H_k}{y_k^T H_k y_k} + \rho_k s_k s_k^T.
\end{align}
Assuming that \((s_k - H_k y_k)^T y_k \neq 0\), the choice
\begin{equation}\label{phi-SR1}
  \phi_k\SRI = \frac{y_k^T s_k}{(s_k - H_k y_k)^T y_k},
\end{equation}
yields the SR1 update~\citep{dennis1974characterization}
\begin{equation}\label{SR1}
  H_{k+1}\SRI = H_k + \frac{(s_k - H_k y_k)(s_k - H_k y_k)^T}{(s_k - H_k y_k)^T y_k}.
\end{equation}

To avoid a proliferation of superscripts, we write \(H_{k+1}\) without explicit dependency on \(\phi_k\) unless we refer to \(H_{k+1}\BFGS\), \(H_{k+1}\DFP\) or \(H_{k+1}\SRI\).
However, it will be necessary to keep track of \(\phi_k\) when we talk about search directions.
Hence, given an approximation \(H_k\) and a value of \(\phi_{k-1}\), the quasi-Newton search direction is
\begin{equation}\label{direction}
  d_k^{\phi_{k-1}} = -H_k g_k.
\end{equation}

The following result states that~\eqref{broyden-family} is almost always nonsingular.

\begin{proposition}[\protect{\citealp[Lemma~4.2]{denis-more-1977}}]\label{proposition-1}
  Assume that $H_k$ is nonsingular, $s_k^T y_k \neq 0$, and $y_k^T H_k y_k \neq 0$.
  Let $B_k = H_k^{-1}$.
  Then, $H_{k+1}$ defined by~\eqref{broyden-family} is singular if and only if $\phi_k = \phi_k^c$ where
  \begin{equation}\label{phikc}
    \phi_k^c = \frac{(y_k^T s_k)^2}{(y_k^T s_k)^2 - (y_k^T H_k y_k)(s_k^T B_k s_k)}.
  \end{equation}
\end{proposition}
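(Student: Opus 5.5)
The plan is to write \(H_{k+1}\) as the BFGS matrix plus a rank-one correction along \(v_k\), and then apply the matrix determinant lemma. Expanding \(\phi_k = 1 + (\phi_k - 1)\) in~\eqref{broyden-family} gives
\[
  H_{k+1} = H_{k+1}\BFGS + (\phi_k - 1)\,(y_k^T H_k y_k)\, v_k v_k^T .
\]
This uses the standard identity that \(H_{k+1}\BFGS\) equals the DFP matrix plus \((y_k^T H_k y_k) v_k v_k^T\); I will verify it by direct expansion of~\eqref{BFGS}.

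\textbf{Step 1: \(H_{k+1}\BFGS\) is nonsingular.} Suppose \(H_{k+1}\BFGS z = 0\). Taking the inner product with \(y_k\) and using \(y_k^T(I - \rho_k s_k y_k^T) = 0\) leaves \(\rho_k (s_k^T y_k)(s_k^T z) = s_k^T z = 0\). Then \((I - \rho_k y_k s_k^T) z = z\), so \(H_k z = 0\), and hence \(z = 0\).

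\textbf{Step 2: an explicit inverse.} I will write down the inverse explicitly as the dual (DFP-type) update of \(B_k\):
\[
  B_{k+1}\BFGS = B_k - \frac{B_k s_k s_k^T B_k}{s_k^T B_k s_k} + \rho_k y_k y_k^T .
\]
This is only valid if \(s_k^T B_k s_k \neq 0\). If \(s_k^T B_k s_k = 0\), I will instead use the formula in the form \(\phi_k^c = (y_k^T s_k)^2/((y_k^T s_k)^2 - 0) = 1\), and check that then \(H_{k+1}\BFGS\) is the singular one. I will treat this as a degenerate case to be checked separately, or handled by continuity. Checking that \(B_{k+1}\BFGS H_{k+1}\BFGS = I\) is routine.

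\textbf{Step 3: reduce to a scalar condition.} By the matrix determinant lemma, \(H_{k+1}\) is singular exactly when
\[
  1 + (\phi_k - 1)(y_k^T H_k y_k)\, v_k^T B_{k+1}\BFGS v_k = 0 .
\]
So everything reduces to computing \(v_k^T B_{k+1}\BFGS v_k\).

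\textbf{Step 4: compute \(v_k^T B_{k+1}\BFGS v_k\).} Write \(\sigma = s_k^T y_k\), \(\gamma = y_k^T H_k y_k\), \(\beta = s_k^T B_k s_k\), so that \(v_k = s_k/\sigma - H_k y_k/\gamma\). The pieces needed are:
\begin{itemize}
  \item \(B_k v_k = B_k s_k/\sigma - y_k/\gamma\);
  \item \(s_k^T B_k v_k = \beta/\sigma - \sigma/\gamma\);
  \item \(y_k^T v_k = 1 - 1 = 0\), so the \(\rho_k y_k y_k^T\) term drops out;
  \item \(v_k^T B_k v_k = \beta/\sigma^2 - 2/\gamma + 1/\gamma = \beta/\sigma^2 - 1/\gamma\).
\end{itemize}
Hence
\[
  v_k^T B_{k+1}\BFGS v_k = \frac{\beta}{\sigma^2} - \frac1\gamma - \frac{(\beta/\sigma - \sigma/\gamma)^2}{\beta} = \frac{\beta\gamma - \sigma^2}{\sigma^2\gamma}\cdot\frac{\sigma^2}{\beta\gamma}\cdot(\ldots).
\]
I will simplify this carefully; I expect it to give
\[
  v_k^T B_{k+1}\BFGS v_k = \frac{\beta\gamma - \sigma^2}{\sigma^2 \gamma}\cdot\Bigl(1 - \frac{\beta\gamma - \sigma^2}{\beta\gamma}\Bigr) = \frac{\beta\gamma - \sigma^2}{\beta\gamma^2}.
\]

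\textbf{Step 5: conclude.} Substituting into the condition of Step 3 gives
\[
  1 + (\phi_k - 1)\,\frac{\beta\gamma - \sigma^2}{\beta\gamma} = 0 .
\]
Solving for \(\phi_k\):
\[
  \phi_k = 1 - \frac{\beta\gamma}{\beta\gamma - \sigma^2} = \frac{\sigma^2}{\sigma^2 - \beta\gamma},
\]
which is exactly~\eqref{phikc}. If \(\beta\gamma = \sigma^2\), the condition can never hold and \(\phi_k^c\) is undefined, consistent with "no singular member" in that case.

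The main obstacle is the algebra in Step 4 together with the degenerate case \(\beta = 0\). As a fallback that avoids inverting \(H_{k+1}\BFGS\), I would work directly on \(\operatorname{span}\{s_k, H_k y_k\}\). There \(H_{k+1}\) acts as \(H_k\) plus a rank-two correction, and its determinant equals \(\det H_k\) times the determinant of a \(2\times 2\) matrix in \(\sigma,\gamma,\beta\). That \(2\times 2\) determinant should be proportional to \(\sigma^2 - \phi_k(\sigma^2 - \beta\gamma)\), which is linear in \(\phi_k\). This form handles all cases uniformly.
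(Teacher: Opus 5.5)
\textbf{Gap in Step~1 and the degenerate case.} The paper gives no proof of this statement; it only cites Dennis and Mor\'e. Your argument therefore has to stand on its own, and its main route breaks at Step~1. After obtaining $s_k^T z = 0$, you have $H_{k+1}\BFGS z = (I - \rho_k s_k y_k^T) H_k z = 0$. But $I - \rho_k s_k y_k^T$ is singular, since it annihilates $s_k$. So you may only conclude $H_k z = c\, s_k$, i.e., $z = c\, B_k s_k$, and then $0 = s_k^T z = c\,\beta$. This forces $z = 0$ only when $\beta \neq 0$.

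Since $H_k$ is merely assumed nonsingular, $\beta = 0$ can occur with $s_k \neq 0$. In that case $B_k s_k$ is a genuine null vector of $H_{k+1}\BFGS$. Your own remark in Step~2 concedes this, which contradicts Step~1. Hence Steps~1--5 prove the proposition only when $s_k^T B_k s_k \neq 0$. On that set your algebra is correct, including $v_k^T B_{k+1}\BFGS v_k = (\beta\gamma - \sigma^2)/(\beta\gamma^2)$.

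The continuity patch for $\beta = 0$ does not work as stated. A limiting argument can show that $H_{k+1}$ is singular at $\phi_k = 1$. It cannot show that $H_{k+1}$ stays nonsingular for every $\phi_k \neq 1$ unless you already have an explicit formula for $\det H_{k+1}$.

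\textbf{How to close it.} Your fallback does close the argument; I checked the computation. With $U = [\, s_k \;\; H_k y_k \,]$ one has $H_{k+1} = H_k + U C U^T$ for a $2 \times 2$ matrix $C$ that is affine in $\phi_k$, and $U^T B_k U = \begin{bmatrix} \beta & \sigma \\ \sigma & \gamma \end{bmatrix}$. Sylvester's identity then gives
\[
  \det H_{k+1} = \det H_k \, \frac{\sigma^2 - \phi_k (\sigma^2 - \beta\gamma)}{\sigma \gamma}.
\]
This covers every case at once, including $\beta = 0$ (where $\phi_k^c = 1$) and $\beta\gamma = \sigma^2$ (where no member of the family is singular).

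An even shorter repair is to expand about DFP instead of BFGS: $H_{k+1} = H_{k+1}\DFP + \phi_k \gamma \, v_k v_k^T$. Under the stated hypotheses $H_{k+1}\DFP$ is always nonsingular, with inverse $(I - \rho_k y_k s_k^T) B_k (I - \rho_k s_k y_k^T) + \rho_k y_k y_k^T$, which needs only $\sigma \neq 0$. Because $y_k^T v_k = 0$, this gives $v_k^T (H_{k+1}\DFP)^{-1} v_k = v_k^T B_k v_k = \beta/\sigma^2 - 1/\gamma$. The matrix determinant lemma then yields the condition $1 + \phi_k(\beta\gamma - \sigma^2)/\sigma^2 = 0$, i.e., $\phi_k = \phi_k^c$, with no degenerate case to treat separately.
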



\begin{proposition}[\protect{\citealp[p.~151]{nocedal-wright-2006}}]\label{proposition-2}
  Let $H_k = H_k^T \succ 0$ and $s_k^T y_k > 0$.
  If $\phi_k > \phi_k^c$, then $H_{k+1} = H_{k+1}^T \succ 0$.
  If $\phi_k < \phi_k^c$, $H_{k+1}$ is nonsingular but may be indefinite.
\end{proposition}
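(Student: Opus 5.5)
The plan is to reduce the claim to a one-parameter family in \(\phi_k\). Writing the update~\eqref{broyden-family} as \(H_{k+1}(\phi_k) = H_{k+1}\DFP + \phi_k\,(y_k^T H_k y_k)\, v_k v_k^T\), the dependence on \(\phi_k\) is a rank-one perturbation of the DFP matrix.

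\textbf{Step 1: DFP is SPD.} I first establish \(H_{k+1}\DFP \succ 0\) under the hypotheses \(H_k \succ 0\) and \(s_k^T y_k > 0\). For any \(z\), the part \(H_k - H_k y_k y_k^T H_k/(y_k^T H_k y_k)\) gives \(z^T H_k z - (z^T H_k y_k)^2/(y_k^T H_k y_k) \ge 0\). This follows from Cauchy--Schwarz in the \(H_k\)-inner product, with equality only when \(z \parallel y_k\). In that equality case, the term \(\rho_k (s_k^T z)^2\) is strictly positive, because \(s_k^T y_k > 0\). Hence \(H_{k+1}\DFP \succ 0\), and \(\phi_k^c < 0\) follows, since \(\phi_k^c\) is the unique singular value of the parameter by \Cref{proposition-1}.

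\textbf{Step 2: eigenvalue continuity.} I then study \(\phi \mapsto H_{k+1}(\phi)\) as a continuous symmetric family that is monotone nondecreasing in the Loewner order, because \(y_k^T H_k y_k > 0\) and \(v_k v_k^T \succeq 0\). Its eigenvalues are continuous and nondecreasing in \(\phi\). By \Cref{proposition-1}, \(H_{k+1}(\phi)\) is singular only at \(\phi = \phi_k^c\). The hypotheses of that proposition hold: \(H_k\) is nonsingular, and \(y_k^T H_k y_k > 0\) because \(H_k \succ 0\) and \(y_k = A s_k \neq 0\). So no eigenvalue can cross zero on \((\phi_k^c, \infty)\). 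Since \(0 \in (\phi_k^c,\infty)\) and \(H_{k+1}(0)\) is SPD, every member of that interval is SPD.

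\textbf{Step 3: the interval \(\phi_k < \phi_k^c\).} Nonsingularity is again immediate from \Cref{proposition-1}. To show indefiniteness can actually occur, I use interlacing for rank-one updates. As \(\phi\) decreases through \(\phi_k^c\), exactly one eigenvalue passes through zero. By interlacing, at most one eigenvalue can change sign, so it becomes negative and \(H_{k+1}\) is indefinite for every \(\phi < \phi_k^c\). This is consistent with the statement's "may be" wording. Alternatively, the determinant formula \(\det(M + \tau v v^T) = \det M\,(1 + \tau v^T M^{-1} v)\) is linear in \(\phi\) and changes sign at \(\phi_k^c\).

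The main obstacle is the careful handling of the sign of \(\phi_k^c\) and the degenerate case \(v_k = 0\). If \(v_k = 0\), the family is constant and \(\phi_k^c\) is effectively \(-\infty\). Cauchy--Schwarz in the form \((y_k^T s_k)^2 \le (y_k^T H_k y_k)(s_k^T B_k s_k)\) shows that the denominator of~\eqref{phikc} is nonpositive. That denominator is zero exactly when \(s_k \parallel H_k y_k\), which corresponds to \(v_k = 0\). This case I treat separately.
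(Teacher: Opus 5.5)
Your proof is correct, and it takes a different route from the paper, which offers no argument here and simply cites Nocedal and Wright. Their version of the result is stated for the direct update \(B_{k+1}\) under the opposite convention (\(\phi_k = 0\) gives BFGS). There, positive definiteness on the restricted class comes from writing the update as a convex combination of the BFGS and DFP updates. The threshold comes from the Cauchy--Schwarz bound \(\mu_k \ge 1\), where \(\mu_k = (y_k^T H_k y_k)(s_k^T B_k s_k)/(y_k^T s_k)^2\) and \(\phi_k^c = 1/(1-\mu_k)\). The translation to the inverse-form parametrization used in the paper is left implicit.

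You work directly in the paper's convention. You anchor the Loewner-nondecreasing rank-one family \(H_{k+1}(\phi) = H_{k+1}\DFP + \phi\,(y_k^T H_k y_k)\,v_k v_k^T\) at the SPD matrix \(H_{k+1}\DFP\), then combine continuity of eigenvalues with \Cref{proposition-1}, and finish with rank-one interlacing. This gives a self-contained proof with a sharper conclusion: for every \(\phi_k < \phi_k^c\), \(H_{k+1}\) has exactly one negative eigenvalue and so is always indefinite, not merely possibly so. You also treat the degenerate case \(v_k = 0\) explicitly.

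A few points to tighten:
\begin{itemize}
  \item Uniqueness of the singular parameter only yields \(\phi_k^c \neq 0\). Step~2 needs the strict inequality \(\phi_k^c < 0\) to place \(0\) inside \((\phi_k^c, \infty)\). That inequality comes either from your closing Cauchy--Schwarz remark or from \(H_{k+1}(\phi) \succeq H_{k+1}\DFP \succ 0\) for \(\phi \ge 0\), so move that argument ahead of Step~2.
  \item The fact \(y_k \neq 0\) follows from \(s_k^T y_k > 0\) alone; \(A\) does not appear in the statement and is not needed.
  \item Concluding ``indefinite'' rather than ``negative definite'' requires \(n \ge 2\). This holds automatically, because \(v_k \neq 0\) means \(s_k\) is not parallel to \(H_k y_k\), which is impossible when \(n = 1\).
\end{itemize}

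Finally, you can remove the dependence on \Cref{proposition-1}, which the paper also only cites. A direct computation gives \((y_k^T H_k y_k)\, v_k^T (H_{k+1}\DFP)^{-1} v_k = \mu_k - 1\). Your determinant identity then reads \(\det H_{k+1}(\phi) = \det H_{k+1}\DFP\,\bigl(1 + \phi(\mu_k - 1)\bigr)\). This exhibits the unique root \(\phi_k^c = 1/(1-\mu_k) < 0\) and the sign change in one line.
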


It is necessary to initialize all quasi-Newton methods with an approximation \(H_0 = H_0^T\) at iteration \(k = 0\).
If the update used has hereditary positive definiteness, such as the updates of \Cref{proposition-2} with $\phi_k > \phi_k^c$, it is common to pick \(H_0 \succ 0\), and typically as a multiple of the identity.

\subsection{The preconditioned conjugate gradient method}

Whenever \(A \succ 0\) in~\eqref{quadratic}, CG generates iterates by performing an exact line search along $A$-conjugate descent directions $\{d_k\PCG\}$, i.e., such that \(d_i^T A d_j = 0\) for \(i \neq j\).
If a preconditioner $H_0 = H_0^T \succ 0$ is available, the $k$th preconditioned CG (PCG) descent direction is
\begin{equation}\label{CG}
  d_0\PCG = -H_0 g_0, \qquad
  d_k\PCG = -H_0 g_k + \dfrac{g_k^T H_0 g_k}{g_{k-1}^T H_0 g_{k-1}} d_{k-1}\PCG \quad (k \geq 1).
\end{equation}

Thus, the $k$th iterate of PCG is generated by performing an exact line search along these directions. The algorithm below describe this method.

\begin{algorithm}[ht]
  \caption{PCG}
  \label{Algo:PCG}
  \begin{algorithmic}[1]
    \State $g_0 = \nabla q(x_0)$, $z_0 = -H_0 g_0$, ${\rho}_0 = -g_0^T z_0$, $d_0\PCG = z_0$
    \For{$k = 0, 1, \ldots$}
    \State ${\alpha}_k\PCG = {\rho}_k / (d_k\PCG)^T A d_k\PCG$
    \State $x_{k+1}\PCG = x_k\PCG + {\alpha}_k\PCG d_k\PCG$
    \State $g_{k+1} = g_k + {\alpha}_k\PCG A d_k\PCG$ \Comment{\(= \nabla q(x_{k+1})\)}
    \State $z_{k+1} = -H_0 g_{k+1}$
    \State ${\rho}_{k+1}\PCG = -g_{k+1}^T z_{k+1}$
    \State ${\beta}_k = {\rho}_{k+1} / {\rho}_k$
    \State $d_{k+1}\PCG = z_{k+1} + {\beta}_{k+1} d_k\PCG$
    \EndFor
  \end{algorithmic}
\end{algorithm}

CG is a special case of PCG with $H_0 = I$. In the following, we present the main results for PCG, which will be used throughout this paper. We assume that $q$ in~\eqref{quadratic} is strictly convex and that $H_0 = H_0^T \succ 0$.

\begin{proposition}
  Let $k$ be such that $g_k \neq 0$.
  The gradients calculated by PCG satisfy,
  \begin{subequations}
  \begin{align}\label{gradortho}
    g_k^T H_0 g_i & = 0 \quad (i < k), \\
  \label{direcorhto}
    g_k^T d_i\PCG & = 0 \quad (i < k).
  \end{align}
  \end{subequations}
\end{proposition}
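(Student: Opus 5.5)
We prove both identities simultaneously by strong induction on \(k\), using three ingredients of \Cref{Algo:PCG}: the exact line search, the gradient recurrence \(g_{k+1} = g_k + \alpha_k\PCG A d_k\PCG\), and the direction recurrence~\eqref{CG}. Alongside~\eqref{gradortho} and~\eqref{direcorhto} we carry the auxiliary \(A\)-conjugacy statement \((d_k\PCG)^T A d_i\PCG = 0\) for \(i < k\). We also use throughout that, while \(g_k \neq 0\), \(\rho_k = g_k^T H_0 g_k > 0\) because \(H_0 \succ 0\). The identity \(-g_k^T d_k\PCG = g_k^T H_0 g_k\) will follow from~\eqref{direcorhto} at index \(k\). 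Hence \(d_k\PCG\) is a nonzero descent direction, the denominator \((d_k\PCG)^T A d_k\PCG\) is positive since \(A \succ 0\), and \(\alpha_k\PCG\) is well defined and equals the exact step~\eqref{exact-linesearch}.

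\emph{Base case.} For \(k = 1\), exactness of the line search gives \(g_1^T d_0\PCG = 0\). Since \(d_0\PCG = -H_0 g_0\), this is also \(g_1^T H_0 g_0 = 0\).

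\emph{Inductive step.} Assume the three statements hold up to index \(k\). We establish them at \(k+1\) in the following order.
\begin{enumerate}
  \item \emph{Last direction.} Exactness of the line search gives \(g_{k+1}^T d_k\PCG = 0\).
  \item \emph{Earlier directions.} For \(i < k\), write \(g_{k+1}^T d_i\PCG = g_k^T d_i\PCG + \alpha_k\PCG (d_k\PCG)^T A d_i\PCG\). Both terms vanish, by the induction hypotheses on~\eqref{direcorhto} and on conjugacy respectively.
  \item \emph{Gradient orthogonality.} Use~\eqref{CG} to express \(H_0 g_i = -d_i\PCG + \beta_{i-1} d_{i-1}\PCG\), with the convention \(d_{-1}\PCG = 0\). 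Then \(g_{k+1}^T H_0 g_i\) is a combination of terms \(g_{k+1}^T d_j\PCG\) with \(j \le k\), all of which vanish by the previous two steps.
  \item \emph{Conjugacy.} Compute \((d_{k+1}\PCG)^T A d_i\PCG = -g_{k+1}^T H_0 A d_i\PCG + \beta_k (d_k\PCG)^T A d_i\PCG\). Replace \(A d_i\PCG\) by \((g_{i+1} - g_i)/\alpha_i\PCG\).
\end{enumerate}

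In the conjugacy computation, when \(i < k\) both terms vanish, the first by the gradient orthogonality just proved and the second by the induction hypothesis. When \(i = k\), the first term reduces to \(-g_{k+1}^T H_0 g_{k+1}/\alpha_k\PCG\), because \(g_{k+1}^T H_0 g_k = 0\). The second term equals \(\beta_k (d_k\PCG)^T A d_k\PCG = \beta_k \rho_k / \alpha_k\PCG = \rho_{k+1}/\alpha_k\PCG\). The two terms therefore cancel.

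This cancellation is the main obstacle. It needs the precise form of \(\beta_k\) and of \(\alpha_k\PCG\), and the identity \(-g_k^T d_k\PCG = \rho_k\). That identity in turn follows from~\eqref{CG} and \(g_k^T d_{k-1}\PCG = 0\), which is~\eqref{direcorhto} at index \(k\), available from the induction.
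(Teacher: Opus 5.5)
Your proof is correct, but it takes a different route from the paper's. The paper does not carry out an induction. It observes that PCG with preconditioner $H_0$ is plain CG applied to $H_0^{1/2} A H_0^{1/2} \hat{x} = H_0^{1/2} b$, with $x = H_0^{1/2} \hat{x}$. The transformed gradients and directions are then $H_0^{1/2} g_k$ and $H_0^{-1/2} d_k\PCG$, so the classical Hestenes--Stiefel orthogonality relations translate directly into \eqref{gradortho} and \eqref{direcorhto}.

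The paper's route buys brevity, at the cost of outsourcing the argument and inheriting the hypotheses of the cited theorem, here the strictly convex setting stated just before the proposition. Your simultaneous induction on direction orthogonality, gradient orthogonality and $A$-conjugacy has three advantages:
\begin{itemize}
  \item It is self-contained.
  \item It proves the paper's next proposition (conjugacy) along the way.
  \item It shows that $A \succ 0$ and $H_0 \succ 0$ are used only to guarantee $(d_k\PCG)^T A d_k\PCG \neq 0$ and $g_k^T H_0 g_k \neq 0$, with symmetry of $A$ doing the rest.
\end{itemize}
The last point matters because the paper later invokes \eqref{gradortho} and \eqref{direcorhto} in \Cref{theorem-1} and \Cref{theorem-12} with $A$ not necessarily PD. Your version justifies that usage more transparently than the reduction to the classical theorem.

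One small repair is needed. Your base case at $k=1$ establishes only the two orthogonality relations. However, the inductive step from $k=1$ to $k=2$, in its ``earlier directions'' item, already needs $(d_1\PCG)^T A d_0\PCG = 0$. Start the induction at $k=0$ instead, where all three statements are vacuous. Your inductive step, including the $i = k = 0$ cancellation $-g_1^T H_0 g_1/\alpha_0\PCG + \beta_0 g_0^T H_0 g_0/\alpha_0\PCG = 0$, then delivers all three statements at $k=1$.
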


\begin{proof}
  The equivalence between applying PCG to the first-order optimality system $A\,x=b$ and applying CG to the system $H_0^{1/2} A H_0^{1/2} x = H_0^{1/2} b$~\citep[Chap.~11]{GoVa13} allows us to invoke the standard results of~\citep[Theorem~5:1]{hestenes-stiefel-1952}.
\end{proof}

\begin{proposition}
  The directions generated by PCG with preconditioner $H_0 = H_0^T \succ 0$ are $A$-conjugate.
  Consequently, PCG finds the unique minimum of $q$ in at most $n$ steps.
\end{proposition}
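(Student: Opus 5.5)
We prove $A$-conjugacy by strong induction on $k$, and read off finite termination at the end. The induction hypothesis at step $k$ is that $(d_i\PCG)^T A d_j\PCG = 0$ for all $i \neq j \le k$, and we show that $d_{k+1}\PCG$ is $A$-conjugate to each $d_j\PCG$ with $j \le k$. Throughout we assume $g_{k+1} \neq 0$; otherwise PCG has already terminated at the minimizer. The tools are the orthogonality relations \eqref{gradortho} and \eqref{direcorhto}, together with the update $g_{j+1} = g_j + \alpha_j\PCG A d_j\PCG$. Here $\alpha_j\PCG > 0$ is well defined because $A \succ 0$ and $\rho_j = g_j^T H_0 g_j > 0$ whenever $g_j \neq 0$.

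Fix $j < k$. From the update we have $A d_j\PCG = (g_{j+1} - g_j)/\alpha_j\PCG$, and from \eqref{CG} we have $d_{k+1}\PCG = -H_0 g_{k+1} + \beta_k d_k\PCG$. Then
\[
  (d_{k+1}\PCG)^T A d_j\PCG = -\frac{g_{k+1}^T H_0 (g_{j+1} - g_j)}{\alpha_j\PCG} + \beta_k (d_k\PCG)^T A d_j\PCG .
\]
The first term vanishes by \eqref{gradortho}, since $j+1 \le k < k+1$. The second term vanishes by the induction hypothesis.

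The case $j = k$ is the main computational step. Using $A d_k\PCG = (g_{k+1} - g_k)/\alpha_k\PCG$ and \eqref{gradortho} gives
\[
  (d_{k+1}\PCG)^T A d_k\PCG = \frac{-g_{k+1}^T H_0 g_{k+1} + \beta_k (d_k\PCG)^T (g_{k+1} - g_k)}{\alpha_k\PCG}.
\]
We evaluate the terms in the numerator as follows.
\begin{itemize}
  \item $(d_k\PCG)^T g_{k+1} = 0$ by \eqref{direcorhto}.
  \item Expanding $d_k\PCG = -H_0 g_k + \beta_{k-1} d_{k-1}\PCG$ and applying \eqref{direcorhto} gives $-(d_k\PCG)^T g_k = g_k^T H_0 g_k = \rho_k$.
  \item Hence $\beta_k \rho_k = \rho_{k+1} = g_{k+1}^T H_0 g_{k+1}$, so the numerator vanishes.
\end{itemize}
The same identity $-g_k^T d_k\PCG = \rho_k$ shows that $\alpha_k\PCG$ coincides with the exact step \eqref{exact-linesearch}. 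The base case $k = 0$ is simply the $j = k$ computation with $d_0\PCG = -H_0 g_0$.

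For termination, observe that nonzero $A$-conjugate vectors are linearly independent when $A \succ 0$. So at most $n$ of them can be nonzero; each $d_k\PCG$ is nonzero while $g_k \neq 0$, because $g_k^T d_k\PCG = -\rho_k < 0$. Moreover, $g_k$ is $H_0$-orthogonal to $g_0, \dots, g_{k-1}$ by \eqref{gradortho}, so $g_0, \dots, g_n$ cannot all be nonzero in $\R^n$. Hence some $g_k = 0$ with $k \le n$, and $x_k$ is the unique minimizer of $q$.

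The only subtlety I anticipate is bookkeeping: \eqref{gradortho} and \eqref{direcorhto} must be invoked only at indices where the gradient is nonzero, which the termination convention guarantees.
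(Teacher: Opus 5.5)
Your proof is correct, but it follows a different route from the paper's. The paper proves the statement in one line. PCG with preconditioner $H_0$ is plain CG on the system with matrix $H_0^{1/2} A H_0^{1/2}$, with directions $H_0^{-1/2} d_k\PCG$. Since $(H_0^{-1/2} d_i\PCG)^T (H_0^{1/2} A H_0^{1/2}) (H_0^{-1/2} d_j\PCG) = (d_i\PCG)^T A d_j\PCG$, conjugacy in the transformed problem is exactly $A$-conjugacy of the $d_k\PCG$. Conjugacy and $n$-step termination are then quoted from Hestenes and Stiefel.

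You instead verify conjugacy directly in the original variables by induction. You use only the recurrence \eqref{CG}, the gradient update, and the relations \eqref{gradortho}--\eqref{direcorhto} from the preceding proposition. You then get termination from linear independence of nonzero conjugate directions. Your second argument, via $H_0$-orthogonality of the gradients, is an independent and equally valid route.

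What your version buys is transparency:
\begin{itemize}
\item it never needs $H_0^{1/2}$;
\item it shows exactly where $\beta_k = \rho_{k+1}/\rho_k$ makes the $j = k$ term cancel;
\item it makes explicit that the step $\rho_k / (d_k\PCG)^T A d_k\PCG$ in \Cref{Algo:PCG} is the exact line-search step \eqref{exact-linesearch}, which the paper leaves implicit.
\end{itemize}
The paper's version buys brevity and does not depend on the previous proposition.

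Note that your proof is only as self-contained as that proposition. In the usual direct proof, $g_{k+1}^T d_j\PCG = 0$ for $j < k$ is itself derived from conjugacy, since $g_{k+1} = g_{j+1} + \sum_{i=j+1}^{k} \alpha_i\PCG A d_i\PCG$. Orthogonality and conjugacy are therefore normally established by a single joint induction. Taking \eqref{gradortho}--\eqref{direcorhto} as given from the cited result, as the paper does, your argument has no circularity. A from-scratch proof would have to merge the two inductions.
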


\begin{proof}
  The same equivalence with standard CG used in the previous proof yields this result directly from~\citep[Theorem~5:2]{hestenes-stiefel-1952}.
\end{proof}

If $q$ is a nonconvex quadratic,~\eqref{exact-linesearch} corresponds to the step to a stationary point of \(q\) from \(x_k\) in the direction \(d_k\). 
If $d_k^T A d_k \neq 0$ for all \(k\), quadratic termination still takes place and CG finds the unique critical point of \(q\) in at most $n$ steps.
Such is the case if \(A\) is quasi-definite~\citep{vanderbei-1995} for certain linear terms \(b\)~\citep{orban-arioli-2017}.

\section{Relationship between the two methods}

In the next few sections, we require one or more of the following assumptions.

\begin{assumption}%
  \label{asm:q-strictly-convex}
  In~\eqref{quadratic}, \(A = A^T \succ 0\).
\end{assumption}

\begin{assumption}%
  \label{asm:broyden}
  Methods of the Broyden class are applied with exact line search and a given initial matrix \(H_0 = H_0^T \succ 0\).
\end{assumption}

\begin{assumption}%
  \label{asm:pcg}
  The PCG method is applied with the same preconditioner \(H_0 = H_0^T \succ 0\) as the initial matrix used in methods of the Broyden class.
\end{assumption}

The following result generalizes existing results by providing a recurrence formula for the proportionality factor between a search direction generated by a Broyden method and that generated by CG.
It expands and completes \citet[\S\(3\) and Theorem~\(1\)]{broyden-1970}, who restricts \(\phi \geq 0\).
The proof follows the pattern of \citet[\S2]{nazareth-1979}.

\begin{theorem}\label{theorem-1}
  Let \Cref{asm:broyden,asm:pcg} be satisfied, but \(A\) is not necessarily PD.
  Let $x_0$ be any starting point, and assume the PCG iterations \(0, \ldots, j\) are well defined.
  Assume that for \(k = 0, \ldots, j\), $\phi_k \neq \phi_k^c$ and
  \begin{align}
    s_k^T A s_k \neq  0,&\label{cond1}\\
    y_k^T H_k y_k \neq  0.&\label{cond2}
  \end{align}
  Then, as long as $g_k \neq 0$, there exists $\gamma_k^{\phi_{k-1}} \neq 0$ such that $d_k^{\phi_{k-1}} = \gamma_k^{\phi_{k-1}} d_k\PCG$, where $d_k\PCG$ is the $k$th conjugate gradient direction preconditioned with $H_0$.
  In addition,
  \begin{equation}\label{EQUATION}
    d_{k+1}^{\phi_k} =
    \frac{
      \gamma_k^{\phi_{k-1}} g_k^T H_0 g_k + \phi_k g_{k+1}^T H_0 g_{k+1}
    }{
      \gamma_k^{\phi_{k-1}} g_k^T H_0 g_k + g_{k+1}^T H_0 g_{k+1}
    }
    d_{k+1}\PCG = \gamma_{k+1}^{\phi_k} d_{k+1}\PCG.
  \end{equation}
  As a result, the iterates $s_k$ generated by any method in the Broyden family are independent of $\phi_k$ and coincide with the PCG iterates.
\end{theorem}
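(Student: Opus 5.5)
The proof is an induction on \(k\) that carries two statements at once. The first is the proportionality \(d_k^{\phi_{k-1}} = \gamma_k^{\phi_{k-1}} d_k\PCG\) with \(\gamma_k^{\phi_{k-1}} \neq 0\). The second is the auxiliary identity
\[
H_j g_i = H_0 g_i \qquad (0 \le j < i \le k),
\]
which says that the quasi-Newton matrices act like the preconditioner on later gradients.

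The base case is immediate: \(d_0^{\phi_{-1}} = -H_0 g_0 = d_0\PCG\), so \(\gamma_0 = 1\). Once proportionality holds up to \(k\), the exact line search~\eqref{exact-linesearch} is invariant under rescaling of the direction. Hence \(s_k = \alpha_k d_k^{\phi_{k-1}}\) equals the PCG step, and the Broyden iterates and gradients coincide with those of PCG. Condition~\eqref{cond1} ensures that \(d_k^T A d_k \neq 0\), so the step is well defined.

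Because the iterates coincide, I may use the PCG relations~\eqref{gradortho}--\eqref{direcorhto}:
\[
g_i^T H_0 g_j = 0, \qquad g_i^T d_j\PCG = 0 \qquad (j < i),
\]
and hence also \(g_i^T s_j = 0\). When \(A\) is not PD, I will not quote those propositions directly. Instead I will argue that their proofs are purely algebraic consequences of the recurrences in \Cref{Algo:PCG}, and so remain valid whenever the PCG iterations are well defined.

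To prove the auxiliary identity at level \(j+1\), I write out the update \(H_{j+1} g_i\) for \(i > j+1\). Each correction term is multiplied by one of the scalars \(s_j^T g_i\), \(y_j^T H_j g_i\) or \(v_j^T g_i\), so it suffices to show these vanish.
\begin{itemize}
\item \(s_j^T g_i = 0\) by exact line search combined with \eqref{direcorhto}.
\item By the inner induction, \(y_j^T H_j g_i = y_j^T H_0 g_i = (g_{j+1}-g_j)^T H_0 g_i\), and this is zero by \eqref{gradortho}.
\item \(v_j^T g_i = 0\) then follows from the first two.
\end{itemize}
Therefore \(H_{j+1} g_i = H_j g_i = H_0 g_i\). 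I expect this bookkeeping, nested inside the outer induction on \(k\), to be the most delicate part. In particular, the two inductions must be ordered so that the orthogonality relations for the gradients up to \(g_{k+1}\) are available before they are used.

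The main step is the direct computation of \(H_{k+1} g_{k+1}\) from~\eqref{broyden-family}. Write \(a = g_{k+1}^T H_0 g_{k+1}\) and \(c = g_k^T H_0 g_k\). The ingredients are as follows.
\begin{itemize}
\item \(s_k^T g_{k+1} = 0\).
\item \(H_k g_{k+1} = H_0 g_{k+1}\), by the auxiliary identity.
\item \(H_k y_k = H_0 g_{k+1} + \gamma_k d_k\PCG\), since \(-H_k g_k = \gamma_k d_k\PCG\).
\item \(y_k^T H_k g_{k+1} = a\), using \(g_k^T H_0 g_{k+1} = 0\).
\item \(y_k^T H_k y_k = a + \gamma_k c\), using \(y_k^T d_k\PCG = -g_k^T d_k\PCG = c\).
\item \(v_k^T g_{k+1} = -a/(a+\gamma_k c)\).
\end{itemize}
Here \(a + \gamma_k c \neq 0\) is exactly condition~\eqref{cond2}.

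Substituting these, \(H_{k+1} g_{k+1}\) becomes a linear combination of \(H_0 g_{k+1}\) and \(d_k\PCG\). I then compare it with \(d_{k+1}\PCG = -H_0 g_{k+1} + (a/c)\, d_k\PCG\) from~\eqref{CG}. The two vectors should be parallel, with coefficient \(\gamma_{k+1} = (\gamma_k c + \phi_k a)/(\gamma_k c + a)\), which is~\eqref{EQUATION}.

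Finally, \(\gamma_{k+1} \neq 0\) follows because \(\phi_k \neq \phi_k^c\). By \Cref{proposition-1}, \(H_{k+1}\) is then nonsingular. Since \(g_{k+1} \neq 0\), this gives \(d_{k+1}^{\phi_k} \neq 0\), so \(\gamma_{k+1} \neq 0\). This closes the induction, and the independence of the iterates from \(\phi_k\) follows from the line-search scaling argument above.
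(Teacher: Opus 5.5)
Your proposal is correct and follows essentially the same route as the paper: a joint induction on the proportionality $d_k^{\phi_{k-1}} = \gamma_k^{\phi_{k-1}} d_k\PCG$ and the identity $H_i g_k = H_0 g_k$, then a direct expansion of $H_{k+1} g_{k+1}$. That expansion uses $H_k y_k = H_0 g_{k+1} + \gamma_k^{\phi_{k-1}} d_k\PCG$ and $y_k^T H_k y_k = g_{k+1}^T H_0 g_{k+1} + \gamma_k^{\phi_{k-1}} g_k^T H_0 g_k$, and nonsingularity from \Cref{proposition-1} gives $\gamma_{k+1}^{\phi_k} \neq 0$. Your remark that the PCG orthogonality relations must be rederived algebraically from the recurrences, rather than quoted, when $A$ is indefinite addresses a point the paper passes over silently.
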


\begin{proof}
  We establish by induction that the vectors $d_k^{\phi_{k-1}}$ generated by the Broyden class methods are colinear to the vectors $d_k\PCG$ and that
  \begin{equation}\label{Higk}
    H_i g_k = H_0 g_k, ~i=0, ..., k-1.
  \end{equation}
  If the two vectors $d_k^{\phi_{k-1}}$ and $d_k\PCG$ are colinear, it follows that with an exact line search the two methods calculate the same iterates $s_k = \alpha_k\PCG d_k\PCG = \alpha_k^{\phi_k} d_k^{\phi_{k-1}}$.
  For $k=0$, $d_0^{\phi_k} = d_0\PCG = -H_0 g_0$ and hence, $s_0\PCG = s_0^{\phi_k}$.
  Assume that the induction hypothesis holds at iteration $k \geq 0$ and $g_{k+1} \neq 0$.
  Because \(\phi_k \neq \phi_k^c\), \Cref{proposition-2} implies that \(H_1, \ldots, H_k\) are nonsingular.
  Under the induction hypothesis, iterates $s_1, s_2, \ldots, s_k$ of all Broyden methods are the same of those generated by CG.
  Henceforth, $s_i^T g_{k+1} = 0$ by~\eqref{direcorhto} for $i=0,1,...,k$ and by~\eqref{vk} and~\eqref{Higk},
  \begin{equation}
    v_i^T g_{k+1} = \left(\frac{s_i^T}{s_i^T y_i} - \frac{y_i^T H_i}{y_i^T H_i y_i}\right) g_{k+1} = -\frac{y_i^T H_0 g_{k+1}}{y_i^T H_i y_i}.
  \end{equation}
  Then by~\eqref{broyden-family} and~\eqref{Higk},
  \begin{align}\label{Higk1}
    H_{i+1} g_{k+1} &= H_i g_{k+1} - \frac{H_i y_i y_i^T H_i g_{k+1}}{y_i^T H_i y_i} + \phi_i (y_i^T H_i y_i) v_i v_i^T g_{k+1}\\
    &= H_0 g_{k+1} - \frac{H_i y_i y_i^T H_0 g_{k+1}}{y_i^T H_i y_i} - \phi_i v_i y_i^T H_0 g_{k+1}.\notag
  \end{align}
  Furthermore, by the definition of \(y_k\) and~\eqref{gradortho}, $y_i^T H_0 g_{k+1} = 0$ for $i=0,1,...,k-1$. Then $H_i g_{k+1} = H_0 g_{k+1}$ for $i=0,1,...,k$, that proves~\eqref{Higk}.\\
  Still according to~\eqref{gradortho},
  \begin{equation}\label{ykgk}
    y_k^T H_0 g_{k+1} = g_{k+1}^T H_0 g_{k+1}.
  \end{equation}
  By induction hypothesis, there exists a scalar $\gamma_k^{\phi_{k-1}} \neq 0$ such that
  \begin{equation}\label{dkphi}
    d_k^{\phi_{k-1}} = \gamma_k^{\phi_{k-1}} d_k\PCG.
  \end{equation}
  According to~\eqref{direcorhto} and~\eqref{CG}, we have
  \begin{align}
    y_k^T d_k\PCG &= \left(g_{k+1}^T - g_k^T\right) \left(-H_0 g_k + \beta_k d_{k-1}\PCG\right) = g_k^T H_0 g_k,\label{ykdk}\\
    H_k y_k &= H_k \left(g_{k+1} - g_k\right) = H_0 g_{k+1} + d_k^{\phi_{k-1}} = H_0 g_{k+1} + \gamma_k^{\phi_{k-1}} d_k\PCG.
  \end{align}
  So,
  \begin{equation}\label{ykHkyk}
    y_k^T H_k y_k = g_{k+1}^T H_0 g_{k+1} + \gamma_k^{\phi_{k-1}} g_k^T H_0 g_k.
  \end{equation}
  By taking again~\eqref{Higk1} with $i = k$ and by~\eqref{vk} and~\eqref{ykgk},
  \begin{align}
    H_{k+1} g_{k+1} &= H_0 g_{k+1} - \frac{H_k y_k y_k^T H_0 g_{k+1}}{y_k^T H_k y_k} - \phi_k g_{k+1}^T H_0 g_{k+1} \left(\frac{s_k}{s_k^T y_k} - \frac{H_k y_k}{y_k^T H_k y_k}\right)\notag\\
    &= H_0 g_{k+1} - \frac{g_{k+1}^T H_0 g_{k+1} H_k y_k}{y_k^T H_k y_k} + \phi_k g_{k+1}^T H_0 g_{k+1}\left(\frac{H_k y_k}{y_k^T H_k y_k} - \frac{d_k\PCG}{y_k^T d_k\PCG}\right)\notag\\
    &= H_0 g_{k+1} \left(1 - \frac{g_{k+1}^T H_0 g_{k+1}}{y_k^T H_k y_k} + \frac{\phi_k g_{k+1}^T H_0 g_{k+1}}{y_k^T H_k y_k} \right)\\
    & ~~~ - d_k\PCG \left(\frac{ \gamma_k^{\phi_{k-1}} g_{k+1}^T H_0 g_{k+1}}{y_k^T H_k y_k} - \frac{\gamma_k^{\phi_{k-1}} \phi_k g_{k+1}^T H_0 g_{k+1}}{y_k^T H_k y_k} + \frac{\phi_k g_{k+1}^T H_0 g_{k+1}}{y_k^T d_k\PCG} \right)\notag\\
    &= \left(\frac{\gamma_k^{\phi_{k-1}} g_k^T H_0 g_k + \phi_k g_{k+1}^T H_0 g_{k+1} }{\gamma_k^{\phi_{k-1}} g_k^T H_0 g_k + g_{k+1}^T H_0 g_{k+1} } \right) \left(H_0 g_{k+1} - \frac{g_{k+1}^T H_0 g_{k+1}}{g_k^T H_0 g_k} d_k\PCG \right).\notag
  \end{align}
  So we have by~\eqref{direction} and~\eqref{CG} the relationship~\eqref{EQUATION}.
  Thanks to the assumptions~\eqref{cond1} and~\eqref{cond2}, $d_{k+1}\PCG$ and $d_{k+1}^{\phi_k}$ are well-defined.
  Finally, $\phi_k \neq \phi_ k^c$ brings by proposition \ref{proposition-1} that $H_{k+1} g_{k+1} \neq 0$ and the two directions are colinear.
  So, with an exact line search, we have $s_{k+1} = \alpha_{k+1}^{\phi_k} d_{k+1}^{\phi_k} = \alpha_{k+1}\PCG d_{k+1}\PCG$ under the assumption that $s_{k+1}^T A s_{k+1} \neq 0$.
\end{proof}

The following result generalizes \citep[Corollary~\(1\)]{broyden-1970} to \(\phi_k \neq \phi_k^c\).

\begin{corollary}%
  \label{cor:broyden-quadratic-termination}
  Let the assumptions of \Cref{theorem-1} be satisfied.
  Methods of the Broyden class inherit the conjugacy property as well as quadratic termination if \(j = n\).
  The secant equation
  \begin{equation}\label{vi}
    H_{k+1} y_i = s_i, \quad (i \leq j)
  \end{equation}
  holds for all previous iterates.
  It follows that if $n$ steps are performed and \(A\) is nonsingular, $H_n = A^{-1}$.
\end{corollary}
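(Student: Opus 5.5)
By \Cref{theorem-1}, every Broyden method satisfying its assumptions generates exactly the PCG steps, so $s_k = \alpha_k\PCG d_k\PCG$ for $k \le j$. Conjugacy of the $s_k$ therefore follows from the $A$-conjugacy of the PCG directions, and quadratic termination follows from the PCG termination result whenever $j = n$. The substantive part is the hereditary secant property~\eqref{vi}. I will prove it by induction on $k$, showing that $H_{k+1} y_i = s_i$ for all $i \le k$.

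The base case $i = k$ is the secant equation~\eqref{eq:secant}, which every member of~\eqref{broyden-family} satisfies. Indeed, $H_{k+1}y_k = H_k y_k + s_k - H_k y_k + \phi_k (y_k^T H_k y_k) v_k (v_k^T y_k)$, and $v_k^T y_k = 1 - 1 = 0$.

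For the inductive step, take $i < k$ and assume $H_k y_i = s_i$. Apply~\eqref{broyden-family} to $y_i$:
\[
H_{k+1} y_i = s_i + \frac{s_k (s_k^T y_i)}{s_k^T y_k} - \frac{H_k y_k (y_k^T s_i)}{y_k^T H_k y_k} + \phi_k (y_k^T H_k y_k) v_k \Bigl(\frac{s_k^T y_i}{s_k^T y_k} - \frac{y_k^T s_i}{y_k^T H_k y_k}\Bigr).
\]
Here I have used the symmetry of $H_k$ to write $y_k^T H_k y_i = y_k^T s_i$. Conjugacy gives $s_k^T y_i = s_k^T A s_i = 0$ and $y_k^T s_i = s_k^T A s_i = 0$, so all correction terms vanish and $H_{k+1} y_i = s_i$. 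This step relies on the inductive hypothesis $H_k y_i = s_i$ and on $H_k$ being symmetric, which every Broyden update preserves. It also needs the denominators $s_k^T y_k = s_k^T A s_k$ and $y_k^T H_k y_k$ to be nonzero, and these are exactly~\eqref{cond1} and~\eqref{cond2}. The nonsingularity condition $\phi_k \ne \phi_k^c$ is not used here. It enters only through \Cref{theorem-1}, which guarantees that the iterates are well defined and coincide with PCG.

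For the final claim, suppose $n$ steps are performed. Then $H_n [y_0,\dots,y_{n-1}] = [s_0,\dots,s_{n-1}]$, that is, $H_n A S = S$ with $S = [s_0,\dots,s_{n-1}]$. The $s_i$ are nonzero and mutually $A$-conjugate. When $A$ is positive definite they are linearly independent, so $S$ is invertible and $H_n A = I$, giving $H_n = A^{-1}$.

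The main obstacle is this independence of the $s_i$ when $A$ is only assumed nonsingular, as in \Cref{theorem-1}. I would argue it as follows. Suppose $\sum_i c_i s_i = 0$. Multiplying by $s_l^T A$ and using conjugacy leaves $c_l\, s_l^T A s_l = 0$, and since $s_l^T A s_l \ne 0$ by~\eqref{cond1}, we get $c_l = 0$ for each $l$. So the $s_i$ are independent, $S$ is invertible, and $H_n = A^{-1}$ holds in that case as well.
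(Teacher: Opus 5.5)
Your proof is correct. The paper gives this corollary no separate proof and treats it as an immediate consequence of \Cref{theorem-1}, in the spirit of Broyden's Corollary~1; your argument (conjugacy and termination inherited from PCG, the standard induction giving $H_{k+1} y_i = s_i$ for $i \le k$, and invertibility of $S$ from conjugacy together with~\eqref{cond1}) is exactly the reasoning the paper leaves implicit, and it also shows that $S^T A S$ is a nonsingular diagonal matrix, so nonsingularity of $A$ is in fact automatic under these hypotheses.
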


Clearly, assumptions on \(A\) in \Cref{theorem-1} and \Cref{cor:broyden-quadratic-termination} hold if \(A\) is SPD.

\begin{corollary}\label{theorem-2}
  Let \Cref{asm:q-strictly-convex,asm:broyden} be satisfied.
  A sufficient condition for~\eqref{cond2} to hold is that $\phi_k > \phi_k^c$.
  Thus,~\eqref{EQUATION} holds and quadratic termination occurs.
\end{corollary}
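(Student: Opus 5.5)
We argue by induction on \(k\), interleaving the conclusion of \Cref{theorem-1} with \Cref{proposition-2}. The induction hypothesis at step \(k\) is that \(H_k = H_k^T \succ 0\) and that the Broyden iterates \(s_0,\dots,s_{k-1}\) coincide with the PCG iterates. For \(k=0\) this holds because \(H_0 \succ 0\) by \Cref{asm:broyden}.

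For the inductive step, suppose the hypothesis holds at step \(k\) and \(g_k \neq 0\). We first check~\eqref{cond1}. Since \(H_k \succ 0\), the direction \(d_k^{\phi_{k-1}} = -H_k g_k\) is nonzero and is a descent direction, because \(g_k^T d_k^{\phi_{k-1}} = -g_k^T H_k g_k < 0\). \Cref{asm:q-strictly-convex} gives \(A \succ 0\), so \(d_k^T A d_k > 0\). The exact step length~\eqref{exact-linesearch} is then well defined and positive, which gives \(s_k \neq 0\) and \(s_k^T A s_k > 0\).

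It follows that \(s_k^T y_k = s_k^T A s_k > 0\). Moreover \(y_k = A s_k \neq 0\) because \(A\) is nonsingular, so \(y_k^T H_k y_k > 0\), which is~\eqref{cond2}. Both inequalities are strict, so the Broyden update~\eqref{broyden-family} is well defined. The hypotheses of \Cref{proposition-2} now hold: \(H_k \succ 0\) and \(s_k^T y_k > 0\). Together with \(\phi_k > \phi_k^c\), that proposition yields \(H_{k+1} \succ 0\), which closes the induction on positive definiteness.

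The same conditions give what \Cref{theorem-1} needs. Since \(\phi_k > \phi_k^c\), we have \(\phi_k \neq \phi_k^c\). PCG is well defined under \Cref{asm:q-strictly-convex,asm:pcg}, because \(d^T A d > 0\) for every nonzero PCG direction and \(g_k^T H_0 g_k > 0\) whenever \(g_k \neq 0\). The assumptions of \Cref{theorem-1} therefore hold at every iteration with \(g_k \neq 0\), so~\eqref{EQUATION} holds. The theorem also shows that the Broyden iterates coincide with the PCG iterates, which supplies the second half of the induction hypothesis.

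Quadratic termination then follows from \Cref{cor:broyden-quadratic-termination}. Alternatively, PCG reaches the minimizer in at most \(n\) steps (the earlier PCG proposition), and the Broyden iterates coincide with the PCG iterates.

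The main subtlety is the circularity: \Cref{theorem-1} assumes~\eqref{cond2} at all steps, while positive definiteness of \(H_k\) is needed to verify it. The interleaved induction resolves this, since at each step only \(H_k \succ 0\) is used to derive~\eqref{cond1} and~\eqref{cond2}, and these in turn give \(H_{k+1} \succ 0\). One should also note that \(\phi_k^c\) in~\eqref{phikc} is defined, because the denominator is nonpositive by Cauchy–Schwarz in the \(H_k\)-inner product. It is zero only when \(y_k\) is parallel to \(B_k s_k\). In that case \(H_{k+1}\) is PD for every \(\phi_k\), and the condition \(\phi_k > \phi_k^c\) should be read as vacuous.
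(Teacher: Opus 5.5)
Your proof is correct and follows the paper's route. Strict convexity gives~\eqref{cond1}, \Cref{proposition-2} with $H_0 \succ 0$ propagates $H_k \succ 0$ and hence~\eqref{cond2}, and \Cref{theorem-1} then yields~\eqref{EQUATION} and quadratic termination; you just make explicit the induction and the facts $s_k \neq 0$, $y_k \neq 0$, and $s_k^T y_k > 0$ that the paper's three-line proof leaves implicit.
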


\begin{proof}
  If~\eqref{quadratic} is strictly convex, $s_k^T A s_k > 0$ and~\eqref{cond1} holds for all $k$.
  As a result of \Cref{proposition-2} and because $H_0 \succ 0$, $H_k \succ 0$ for all $k$. Thus, $y_k^T H_k y_k > 0$ and~\eqref{cond2} holds for all $k$.
  The result follows from \Cref{theorem-1}.
\end{proof}

\begin{lemma}\label{lemma-9}
  With the notation of \Cref{theorem-1} we can write
  \begin{equation}\label{phi-k-c}
    \phi_k^c = -\gamma_k^{\phi_{k-1}} \frac{g_k^T H_0 g_k}{g_{k+1}^T H_0 g_{k+1}}.
  \end{equation}
\end{lemma}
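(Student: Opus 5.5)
We start from the definition of $\phi_k^c$ in \eqref{phikc},
\[
\phi_k^c = \frac{(y_k^T s_k)^2}{(y_k^T s_k)^2 - (y_k^T H_k y_k)(s_k^T B_k s_k)},
\]
and express each of its three ingredients through the PCG quantities $g_k^T H_0 g_k$ and $g_{k+1}^T H_0 g_{k+1}$, using the facts established in the proof of \Cref{theorem-1}. Since the iterates coincide with those of PCG, write $s_k = \alpha_k d_k^{\phi_{k-1}} = \alpha_k \gamma_k^{\phi_{k-1}} d_k\PCG$, where $\alpha_k$ is the Broyden step length.

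The first ingredient is $y_k^T s_k$. By \eqref{ykdk}, $y_k^T d_k\PCG = g_k^T H_0 g_k$, so
\[
y_k^T s_k = \alpha_k \gamma_k^{\phi_{k-1}}\, g_k^T H_0 g_k .
\]
The second ingredient, $y_k^T H_k y_k$, is given directly by \eqref{ykHkyk}: it equals $g_{k+1}^T H_0 g_{k+1} + \gamma_k^{\phi_{k-1}} g_k^T H_0 g_k$.

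The third ingredient, $s_k^T B_k s_k$, is the key step. We use $B_k d_k^{\phi_{k-1}} = B_k(-H_k g_k) = -g_k$, which gives
\[
s_k^T B_k s_k = \alpha_k^2\, (d_k^{\phi_{k-1}})^T B_k d_k^{\phi_{k-1}} = -\alpha_k^2\, g_k^T d_k^{\phi_{k-1}} = -\alpha_k^2 \gamma_k^{\phi_{k-1}}\, g_k^T d_k\PCG .
\]
From \eqref{CG} and \eqref{direcorhto}, $g_k^T d_k\PCG = -g_k^T H_0 g_k$. Hence
\[
s_k^T B_k s_k = \alpha_k^2 \gamma_k^{\phi_{k-1}}\, g_k^T H_0 g_k .
\]
This is where nonsingularity of $H_k$ (from $\phi_{k-1}\neq\phi_{k-1}^c$ and \Cref{proposition-1}) is needed, so that $B_k$ exists.

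Now substitute. Abbreviate $a = g_k^T H_0 g_k$, $c = g_{k+1}^T H_0 g_{k+1}$ and $\gamma = \gamma_k^{\phi_{k-1}}$. The numerator of \eqref{phikc} is $\alpha_k^2\gamma^2 a^2$. The denominator is
\[
\alpha_k^2\gamma^2 a^2 - (c+\gamma a)\,\alpha_k^2 \gamma a = -\alpha_k^2 \gamma a c .
\]
The factor $\alpha_k^2 \gamma a$ cancels, since $\alpha_k \neq 0$ (otherwise $s_k=0$, contradicting \eqref{cond1}), $\gamma \neq 0$, and $a>0$. This leaves
\[
\phi_k^c = -\gamma\, \frac{a}{c},
\]
which is \eqref{phi-k-c}; the division by $c$ is legitimate because $g_{k+1} \neq 0$.

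The main obstacle is the $s_k^T B_k s_k$ term, specifically recognizing the identity $B_k d_k = -g_k$ and invoking the orthogonality relation $g_k^T d_{k-1}\PCG = 0$. The remaining algebra is routine.

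As a consistency check, the value of $\phi_k^c$ obtained is exactly the one that makes the numerator of $\gamma_{k+1}^{\phi_k}$ in \eqref{EQUATION} vanish, which matches the singularity characterization in \Cref{proposition-1}.
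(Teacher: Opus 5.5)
Your proposal is correct and follows the paper's proof essentially step by step. Both proofs evaluate the three ingredients of \eqref{phikc} using \eqref{ykdk}, \eqref{ykHkyk}, and the identity $B_k d_k^{\phi_{k-1}} = -g_k$ together with $g_k^T d_k\PCG = -g_k^T H_0 g_k$. The only cosmetic difference is how the step length is handled: the paper removes it at the outset, because \eqref{phikc} is invariant under scaling of $s_k$ and $s_k$ can be replaced by $d_k^{\phi_{k-1}}$, whereas you carry $\alpha_k$ through and cancel $\alpha_k^2$ at the end.
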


\begin{proof}
  By~\eqref{phikc},
  \begin{equation*}
    \phi_k^c = \frac{(y_k^T s_k)^2}{(y_k^T s_k)^2 - (y_k^T H_k y_k)(s_k^T B_k s_k)} = \frac{(y_k^T d_k^{\phi_{k-1}})^2}{(y_k^T d_k^{\phi_{k-1}})^2 - (y_k^T H_k y_k)({d_k^{\phi_{k-1}}}^T B_k d_k^{\phi_{k-1}})}.
  \end{equation*}
  Furthermore, $B_k d_k^{\phi_{k-1}} = -g_k$. So, by~\eqref{dkphi},
  \begin{equation}
    {d_k^{\phi_{k-1}}}^T B_k d_k^{\phi_{k-1}} = - \gamma_k^{\phi_{k-1}} g_k^T d_k\PCG = \gamma_k^{\phi_{k-1}} g_k^T H_0 g_k.
  \end{equation}
  It follows from~\eqref{ykdk} and~\eqref{ykHkyk} that
  \begin{align*}
    \phi_k^c & = \frac{\left(\gamma_k^{\phi_{k-1}} g_k^T H_0 g_k\right)^2}{\left(\gamma_k^{\phi_{k-1}} g_k^T H_0 g_k\right)^2 - \left(\gamma_k^{\phi_{k-1}} g_k^T H_0 g_k + g_{k+1}^T H_0 g_{k+1} \right) \gamma_k^{\phi_{k-1}} g_k^T H_0 g_k}
    \\ & = -\frac{\gamma_k^{\phi_{k-1}} g_k^T H_0 g_k}{g_{k+1}^T H_0 g_{k+1}}.
  \end{align*}
\end{proof}

\begin{corollary}\label{corollary-9}
  Let \Cref{asm:q-strictly-convex,asm:broyden} be satisfied.
  Then, $\gamma_k^{\phi_{k-1}} > 0$ provided $\phi_k > \phi_k^c$.
\end{corollary}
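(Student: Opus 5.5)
We argue by induction on \(k\) that \(\gamma_k^{\phi_{k-1}} > 0\) for every \(k\) with \(g_k \neq 0\), using the recurrence~\eqref{EQUATION} together with the expression of \(\phi_k^c\) in \Cref{lemma-9}. For the base case, \(d_0^{\phi_{-1}} = -H_0 g_0 = d_0\PCG\), so \(\gamma_0 = 1 > 0\). Under \Cref{asm:q-strictly-convex,asm:broyden}, \Cref{theorem-2} guarantees that the hypotheses of \Cref{theorem-1} hold whenever \(\phi_k > \phi_k^c\). Hence~\eqref{EQUATION} is available at every step, and \(H_k \succ 0\) for all \(k\) by \Cref{proposition-2}.

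For the inductive step, suppose \(\gamma_k := \gamma_k^{\phi_{k-1}} > 0\) and write \(a := g_k^T H_0 g_k > 0\) and \(c := g_{k+1}^T H_0 g_{k+1} > 0\); both are positive because \(H_0 \succ 0\) and the gradients are nonzero. Then~\eqref{EQUATION} reads
\[
  \gamma_{k+1}^{\phi_k} = \frac{\gamma_k a + \phi_k c}{\gamma_k a + c}.
\]
The denominator is positive because \(\gamma_k > 0\); it also equals \(y_k^T H_k y_k > 0\) by~\eqref{ykHkyk}. By \Cref{lemma-9}, \(\phi_k^c = -\gamma_k a / c\), so \(\phi_k > \phi_k^c\) is equivalent to \(\phi_k c > -\gamma_k a\), that is, to \(\gamma_k a + \phi_k c > 0\). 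Therefore the numerator is positive as well, and \(\gamma_{k+1}^{\phi_k} > 0\).

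An alternative route, which avoids relying on the sign of the denominator, is direct. Since \(H_k \succ 0\), \(d_k^{\phi_{k-1}} = -H_k g_k\) is a descent direction, so \(g_k^T d_k^{\phi_{k-1}} < 0\). Combining this with \(g_k^T d_k\PCG = -g_k^T H_0 g_k < 0\) (from~\eqref{CG} and~\eqref{direcorhto}) and with \(d_k^{\phi_{k-1}} = \gamma_k d_k\PCG\) gives \(\gamma_k > 0\). Both arguments are short; the only delicate point is the index bookkeeping. The hypothesis is \(\phi_j > \phi_j^c\) for the earlier steps \(j < k\), which is what yields positive definiteness of \(H_k\), and \(\gamma_k\) must be indexed consistently with \Cref{lemma-9}. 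I would present the direct descent argument as the main proof and the recurrence computation as confirmation.
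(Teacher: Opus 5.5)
Your proposal is correct, and your recurrence computation is exactly the paper's proof: the denominator of~\eqref{EQUATION} is $y_k^T H_k y_k > 0$, \Cref{lemma-9} converts $\phi_k > \phi_k^c$ into positivity of the numerator, and the induction starts from $\gamma_0 = 1$.

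The argument you propose to lead with is a genuinely different route. Pairing $d_k^{\phi_{k-1}} = \gamma_k^{\phi_{k-1}} d_k\PCG$ with $g_k$ and using $g_k^T d_k\PCG = -g_k^T H_0 g_k$ yields the closed form
\[
  \gamma_k^{\phi_{k-1}} = \frac{g_k^T H_k g_k}{g_k^T H_0 g_k}.
\]
So the sign of $\gamma_k^{\phi_{k-1}}$ is just the sign of the curvature of $H_k$ along $g_k$. Positivity then follows from hereditary positive definiteness (\Cref{proposition-2} with $\phi_i > \phi_i^c$ for $i < k$), with no need for \Cref{lemma-9} or an induction on $\gamma$. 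This argument would apply unchanged to any update that keeps $H_k \succ 0$ and yields directions colinear with PCG.

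The paper's route gives in exchange a sharper one-step statement. By \Cref{lemma-9}, the numerator of~\eqref{EQUATION} equals $g_{k+1}^T H_0 g_{k+1}\,(\phi_k - \phi_k^c)$. Hence, once $\gamma_k^{\phi_{k-1}} > 0$, the sign of $\gamma_{k+1}^{\phi_k}$ is exactly the sign of $\phi_k - \phi_k^c$. This identifies $\phi_k^c$ as the threshold at which the Broyden direction reverses orientation relative to PCG, and this recurrence viewpoint is also the one reused in \Cref{prop:optimal-step-lenghts}.

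Your reading of the indices is the correct one: the hypothesis that matters concerns $\phi_j$ for $j < k$. It is cleaner than the indexing in the statement and in the paper's proof.
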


\begin{proof}
  By~\eqref{ykHkyk} and \Cref{proposition-2}, the denominator of $\gamma_k^{\phi_{k-1}}$ is positive because~\eqref{quadratic} is strictly convex, $H_0$ PD and $\phi_k > \phi_k^c$.
  If $\gamma_{k-1}^{\phi_k} > 0$,~\eqref{phi-k-c} implies $\gamma_k^{\phi_{k-1}} > 0$ if $\phi_k > \phi_k^c$. Finally, $\gamma_0^{\phi_k} = 1$.
  By induction, $\gamma_k^{\phi_{k-1}} > 0$ for all $k \geq 0$.
\end{proof}

\begin{corollary}%
  \label{corollary-10}
  Let \Cref{asm:q-strictly-convex,asm:broyden} be satisfied.
  Methods of the Broyden class with $\phi_k \geq 0$ are always well-defined.
\end{corollary}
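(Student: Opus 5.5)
We argue by induction on \(k\) that, when \(\phi_k \ge 0\) for all \(k\), every quantity appearing in the Broyden update~\eqref{broyden-family} is well defined and the hypotheses of \Cref{theorem-1} hold at every iteration. Specifically, we show that \(\gamma_k^{\phi_{k-1}} > 0\), that \(H_k \succ 0\), and that \(\phi_k > \phi_k^c\), for as long as \(g_k \neq 0\). Once \(g_k = 0\) the method stops, so nothing further needs to be defined.

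The key observation is \Cref{lemma-9}, which writes
\[
  \phi_k^c = -\gamma_k^{\phi_{k-1}} \, \frac{g_k^T H_0 g_k}{g_{k+1}^T H_0 g_{k+1}}.
\]
Under \Cref{asm:broyden} we have \(H_0 \succ 0\), so the ratio is positive whenever \(g_k \neq 0\) and \(g_{k+1} \neq 0\). Hence \(\gamma_k^{\phi_{k-1}} > 0\) implies \(\phi_k^c < 0 \le \phi_k\), so the strict inequality \(\phi_k > \phi_k^c\) follows automatically from \(\phi_k \ge 0\). (If \(g_{k+1} = 0\), the iteration has terminated and no update is needed.)

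The induction runs as follows. For the base case, \(\gamma_0 = 1 > 0\) and \(H_0 \succ 0\). For the inductive step, suppose \(H_k \succ 0\) and \(\gamma_k^{\phi_{k-1}} > 0\).
\begin{enumerate}
  \item Since \(H_k \succ 0\), the direction \(d_k = -H_k g_k\) is a descent direction. Because \(A \succ 0\), the exact step \(\alpha_k > 0\) is well defined.
  \item It follows that \(s_k^T y_k = s_k^T A s_k > 0\), which gives~\eqref{cond1}, and that \(y_k^T H_k y_k > 0\) since \(y_k = A s_k \neq 0\), which gives~\eqref{cond2}. So the update~\eqref{broyden-family} is well defined.
  \item By the observation above, \(\phi_k > \phi_k^c\). \Cref{proposition-2} then yields \(H_{k+1} \succ 0\), in particular nonsingular.
  \item The recurrence~\eqref{EQUATION} gives \(\gamma_{k+1}^{\phi_k}\) as a ratio whose numerator \(\gamma_k g_k^T H_0 g_k + \phi_k g_{k+1}^T H_0 g_{k+1}\) and denominator \(\gamma_k g_k^T H_0 g_k + g_{k+1}^T H_0 g_{k+1}\) are both strictly positive, because \(\gamma_k > 0\) and \(\phi_k \ge 0\). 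Hence \(\gamma_{k+1}^{\phi_k} > 0\). This also recovers \Cref{corollary-9} in this setting.
\end{enumerate}

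With these facts in hand, \Cref{theorem-1} (via \Cref{theorem-2}) applies at every step, so all iterations are well defined until \(g_k = 0\).

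The only delicate point is circularity. \Cref{lemma-9} uses \(\gamma_k\), which \Cref{theorem-1} defines under the very assumptions we are trying to verify. The induction must therefore be ordered carefully. At step \(k\), the theorem's hypotheses for indices \(0,\dots,k-1\) are already established, so \(\gamma_k\) exists. We then use it to certify \(\phi_k > \phi_k^c\), and only afterwards form \(H_{k+1}\).

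A second point to check is that \Cref{lemma-9} requires \(g_{k+1} \neq 0\) for the denominator. This holds exactly in the regime where an update is needed, since a zero gradient means the method has already terminated.
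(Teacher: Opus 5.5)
Your proof is correct and follows the paper's own route. Positivity of $\gamma_k^{\phi_{k-1}}$ (\Cref{corollary-9}) combined with \Cref{lemma-9} gives $\phi_k^c < 0 \le \phi_k$, and \Cref{theorem-2} (through \Cref{proposition-2}) then yields well-definedness. Your explicit ordering of the induction is a real improvement: you use $\gamma_k > 0$ from earlier steps to certify $\phi_k > \phi_k^c$ before forming $H_{k+1}$, and you read off $\gamma_{k+1} > 0$ directly from~\eqref{EQUATION}. This removes the apparent circularity in the paper's terse argument, which invokes \Cref{corollary-9} under the very hypothesis $\phi_k > \phi_k^c$ that it then deduces.
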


\begin{proof}
  On a strictly convex quadratic function, it was proved by \Cref{corollary-9} that if $\phi_k > \phi_k^c$, then $\gamma_k^{\phi_{k-1}} > 0$ at each iteration.
  Thus by~\eqref{phi-k-c}, $\phi_k^c < 0$.
  It follows by \Cref{theorem-2} that the Broyden class methods are well-defined when $\phi_k \geq 0$.
\end{proof}

From \Cref{theorem-1}, if two methods of the Broyden class are applied to minimize~\eqref{quadratic}, one with parameter \(\phi_k^{(1)}\) and the other with parameter \(\phi_k^{(2)}\), the search directions at iteration \(k + 1\) are related via
\begin{equation}%
  \label{eq:dphi1-dphi2}
  d_{k+1}^{\phi_k^{(1)}} = \frac{ \gamma_{k+1}^{\phi_k^{(1)}} }{ \gamma_{k+1}^{\phi_k^{(2)}} } \, d_{k+1}^{\phi_k^{(2)}}.
\end{equation}

The next two results examine special cases.

\begin{proposition}
  Let \Cref{asm:q-strictly-convex,asm:broyden} be satisfied.
  Let $d_k\BFGS$ and $d_k\DFP$ be the directions generated respectively by the BFGS and DFP methods. Let $\gamma_k\BFGS$ and $\gamma_k\DFP$ the coefficients such that $d_k\BFGS = \gamma_k\BFGS d_k\PCG$ and $d_k\DFP = \gamma_k\DFP d_k\PCG$.
  Then,
  \begin{equation}\label{dkBFGS}
    d_{k+1}\BFGS = d_{k+1}\PCG,
  \end{equation}
  i.e., $\gamma_k\BFGS = 1$ for all $k$.
  In addition,
  \begin{equation}\label{DFPdk}
    d_{k+1}\DFP =  \frac{\gamma_k\DFP g_k^T H_0 g_k}{\gamma_k\DFP g_k^T H_0 g_k + g_{k+1}^T H_0 g_{k+1}} d_{k+1}\PCG.
  \end{equation}
\end{proposition}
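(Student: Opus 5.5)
This proposition is a direct specialization of the recurrence~\eqref{EQUATION} in \Cref{theorem-1}, so the plan is to verify its hypotheses and then substitute \(\phi_k = 1\) and \(\phi_k = 0\). Under \Cref{asm:q-strictly-convex,asm:broyden} (with the PCG preconditioner equal to \(H_0\), as in \Cref{asm:pcg}), \Cref{corollary-10} guarantees that both BFGS and DFP are well defined because their parameters are nonnegative. More precisely, the proof of \Cref{corollary-10} shows \(\phi_k^c < 0\), hence \(\phi_k > \phi_k^c\) for both choices. \Cref{theorem-2} then supplies~\eqref{cond1} and~\eqref{cond2}, so \Cref{theorem-1} applies at every iteration with \(g_k \neq 0\).

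For BFGS I would argue by induction on \(k\) that \(\gamma_k\BFGS = 1\). The base case is \(d_0\BFGS = -H_0 g_0 = d_0\PCG\), so \(\gamma_0\BFGS = 1\). For the inductive step, set \(\phi_k = 1\) in~\eqref{EQUATION}. The numerator and denominator both become \(\gamma_k\BFGS g_k^T H_0 g_k + g_{k+1}^T H_0 g_{k+1}\). This common value is nonzero: by~\eqref{ykHkyk} it equals \(y_k^T H_k y_k\), which is positive since \(H_k \succ 0\) (\Cref{proposition-2}) and \(y_k = A s_k \neq 0\). The ratio is therefore exactly \(1\), giving \(d_{k+1}\BFGS = d_{k+1}\PCG\). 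In fact this step holds regardless of the value of \(\gamma_k\BFGS\), so the induction is only needed to identify \(\gamma_0\BFGS\).

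For DFP, set \(\phi_k = 0\) in~\eqref{EQUATION}. The term \(\phi_k g_{k+1}^T H_0 g_{k+1}\) in the numerator vanishes, which yields~\eqref{DFPdk} immediately. The denominator is again \(y_k^T H_k y_k > 0\), so the expression is well defined. By \Cref{corollary-9}, \(\gamma_k\DFP > 0\), so the coefficient in~\eqref{DFPdk} lies strictly between \(0\) and \(1\); this is a remark rather than something the statement requires.

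The only delicate point is confirming that the denominators are nonzero, that is, that the hypotheses of \Cref{theorem-1} hold for these specific \(\phi_k\). That is handled by \Cref{corollary-10} and~\eqref{ykHkyk}. Everything else is substitution.
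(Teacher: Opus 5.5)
Your proposal is correct and matches the paper's proof, which simply substitutes $\phi_k = 1$ and $\phi_k = 0$ into~\eqref{EQUATION}. Your extra checks, namely that the hypotheses of \Cref{theorem-1} hold via \Cref{corollary-10} and \Cref{theorem-2} and that the common denominator equals $y_k^T H_k y_k > 0$, are left implicit in the paper and make the argument more complete.
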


\begin{proof}
  The BFGS and DFP methods correspond to $\phi_k = 1$ and $\phi_k = 0$ in~\eqref{EQUATION}.
\end{proof}

\begin{proposition}
  Let \Cref{asm:q-strictly-convex,asm:broyden} be satisfied.
  Let $d_k\SRI$ be the direction generated by the SR1 method and let $\gamma_k\SRI$ be the coefficient such that $d_k\SRI = \gamma_k\SRI d_k\PCG$.
  If
  \begin{equation}
    \left(s_k - H_k y_k \right)^T y_k \neq 0,
  \end{equation}
  then $\phi_k\SRI$ is well-defined and~\eqref{phi-SR1} exists.
  In addition,
  \begin{equation}
    d_{k+1}\SRI = \frac{\left(\gamma_k\SRI - \alpha_k\PCG \right) g_k^T H_0 g_k}{\left(\gamma_k\SRI - \alpha_k\PCG \right) g_k^T H_0  g_k + g_{k+1}^T H_0 g_{k+1}} d_{k+1}\PCG.\label{SR1dk}
  \end{equation}
\end{proposition}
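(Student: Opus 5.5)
The plan is to specialize \eqref{EQUATION} of \Cref{theorem-1} to \(\phi_k = \phi_k\SRI\) from \eqref{phi-SR1}. The whole proof then reduces to expressing \(\phi_k\SRI\) through the quantities that appear in \eqref{EQUATION}, namely \(\gamma_k\SRI\), \(g_k^T H_0 g_k\), \(g_{k+1}^T H_0 g_{k+1}\) and \(\alpha_k\PCG\).

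First, the hypothesis \((s_k - H_k y_k)^T y_k \neq 0\) makes the denominator in \eqref{phi-SR1} nonzero, so \(\phi_k\SRI\) is well defined. Since \(y_k^T s_k \neq 0\) under \Cref{asm:q-strictly-convex}, we also have \(\phi_k\SRI \neq 0\). I will also need \(\phi_k\SRI \neq \phi_k^c\) and \eqref{cond2} in order to invoke \Cref{theorem-1}. I will treat these as standing assumptions inherited from that theorem, noting afterwards that the resulting coefficient is nonzero exactly when \(\phi_k\SRI \neq \phi_k^c\), which is consistent with \Cref{lemma-9}.

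Next, I compute the two inner products in \eqref{phi-SR1} using identities from the proof of \Cref{theorem-1}. Write \(s_k = \alpha_k\PCG d_k\PCG\), which is valid by the theorem. Then \eqref{ykdk} gives
\[
  y_k^T s_k = \alpha_k\PCG g_k^T H_0 g_k,
\]
and \eqref{ykHkyk} gives
\[
  y_k^T H_k y_k = g_{k+1}^T H_0 g_{k+1} + \gamma_k\SRI g_k^T H_0 g_k.
\]
Hence
\[
  (s_k - H_k y_k)^T y_k = (\alpha_k\PCG - \gamma_k\SRI)\, g_k^T H_0 g_k - g_{k+1}^T H_0 g_{k+1}.
\]
To lighten notation, set \(a = g_k^T H_0 g_k\), \(c = g_{k+1}^T H_0 g_{k+1}\) and \(\gamma = \gamma_k\SRI\). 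Then
\[
  \phi_k\SRI = \frac{\alpha_k\PCG a}{(\alpha_k\PCG - \gamma)a - c}.
\]

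Finally, I substitute into the coefficient of \eqref{EQUATION}:
\[
  \gamma_{k+1}\SRI = \frac{\gamma a + \phi_k\SRI c}{\gamma a + c}.
\]
Multiplying numerator and denominator by \(D = (\alpha_k\PCG - \gamma)a - c\) turns the numerator into
\[
  \gamma a\bigl[(\alpha_k\PCG - \gamma)a - c\bigr] + \alpha_k\PCG a c = a(\gamma a + c)(\alpha_k\PCG - \gamma).
\]
The factor \(\gamma a + c = y_k^T H_k y_k\), which is nonzero by \eqref{cond2}, then cancels, leaving
\[
  \gamma_{k+1}\SRI = \frac{(\alpha_k\PCG - \gamma)a}{(\alpha_k\PCG - \gamma)a - c}.
\]
Multiplying numerator and denominator by \(-1\) gives \eqref{SR1dk}. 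The main obstacle is only this algebraic cancellation together with the sign bookkeeping, and the factorization above is exactly what I would check carefully.
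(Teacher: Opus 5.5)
Your proposal is correct and follows the paper's proof essentially step for step. You express $y_k^T s_k$ and $y_k^T H_k y_k$ via \eqref{ykdk} and \eqref{ykHkyk}, rewrite $\phi_k\SRI$ in terms of $\gamma_k\SRI$, $\alpha_k\PCG$ and the two $H_0$-weighted norms, substitute into the coefficient of \eqref{EQUATION}, and cancel the common factor $\gamma_k\SRI g_k^T H_0 g_k + g_{k+1}^T H_0 g_{k+1}$. The only differences are cosmetic: you flip the sign at the end rather than before substituting, and you explicitly flag \eqref{cond2} and $\phi_k\SRI \neq \phi_k^c$ as hypotheses needed to invoke \Cref{theorem-1}, which the paper leaves implicit even though SR1 updates need not stay positive definite.
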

\begin{proof}
  The assumption $(s_k - H_k y_k)^T y_k \neq 0$ is precisely the condition under which the SR1 parameter~\eqref{phi-SR1} is well-defined, and hence the SR1 update belongs to the Broyden family with $\phi_k = \phi_k\SRI$.
  Since the iterates generated by the Broyden family coincide with those of PCG by \Cref{theorem-1}, we have \(s_k = \alpha_k\PCG d_k\PCG\).
  Using~\eqref{ykdk} and~\eqref{ykHkyk}, with $\gamma_k^{\phi_{k-1}} = \gamma_k\SRI$, gives
  \begin{align*}
    y_k^T s_k &= \alpha_k\PCG y_k^T d_k\PCG
    = \alpha_k\PCG g_k^T H_0 g_k,\\
    y_k^T H_k y_k &= \gamma_k\SRI g_k^T H_0 g_k + g_{k+1}^T H_0 g_{k+1}.
  \end{align*}
  Therefore,
  \begin{align*}
    \phi_k\SRI
    &= \frac{\alpha_k\PCG g_k^T H_0 g_k}
    {\left(\alpha_k\PCG - \gamma_k\SRI\right) g_k^T H_0 g_k - g_{k+1}^T H_0 g_{k+1}}\\
    &= - \frac{\alpha_k\PCG g_k^T H_0 g_k}
    {\left(\gamma_k\SRI - \alpha_k\PCG\right) g_k^T H_0 g_k + g_{k+1}^T H_0 g_{k+1}}.
  \end{align*}
  Substituting this expression for $\phi_k\SRI$ in the  numerator of~\eqref{EQUATION} yields
  \begin{multline*}
    \gamma_k\SRI g_k^T H_0 g_k + \phi_k\SRI g_{k+1}^T H_0 g_{k+1}
    \\ = \gamma_k\SRI g_k^T H_0 g_k
    - \frac{\alpha_k\PCG g_k^T H_0 g_k\, g_{k+1}^T H_0 g_{k+1}}
    {\left(\gamma_k\SRI - \alpha_k\PCG\right) g_k^T H_0 g_k + g_{k+1}^T H_0 g_{k+1}}
    \\ = \frac{\gamma_k\SRI g_k^T H_0 g_k
    \left[\left(\gamma_k\SRI - \alpha_k\PCG\right) g_k^T H_0 g_k + g_{k+1}^T H_0 g_{k+1}\right]
    - \alpha_k\PCG g_k^T H_0 g_k\, g_{k+1}^T H_0 g_{k+1}}
    {\left(\gamma_k\SRI - \alpha_k\PCG\right) g_k^T H_0 g_k + g_{k+1}^T H_0 g_{k+1}}
    \\ = \frac{\gamma_k\SRI\left(\gamma_k\SRI - \alpha_k\PCG\right) \left(g_k^T H_0 g_k\right)^2
    + \left(\gamma_k\SRI - \alpha_k\PCG\right) g_k^T H_0 g_k\, g_{k+1}^T H_0 g_{k+1}}
    {\left(\gamma_k\SRI - \alpha_k\PCG\right) g_k^T H_0 g_k + g_{k+1}^T H_0 g_{k+1}}
    \\ = \frac{\left(\gamma_k\SRI - \alpha_k\PCG\right) g_k^T H_0 g_k
    \left(\gamma_k\SRI g_k^T H_0 g_k + g_{k+1}^T H_0 g_{k+1}\right)}
    {\left(\gamma_k\SRI - \alpha_k\PCG\right) g_k^T H_0 g_k + g_{k+1}^T H_0 g_{k+1}}.
  \end{multline*}
The result follows by dividing the above by $\gamma_k\SRI g_k^T H_0 g_k + g_{k+1}^T H_0 g_{k+1}$.
\end{proof}

The next result concerns methods of the \emph{convex} Broyden class, i.e., those for which \(0 \leq \phi_k \leq 1\).

\begin{theorem}%
  \label{prop:optimal-step-lenghts}
  Let \Cref{asm:broyden} be satisfied.
  Consider two methods of the Broyden class, one with parameter $\phi_k^{(1)}$ and the other with parameter $\phi_k^{(2)}$ such that $0 \leq \phi_k^{(1)} \leq \phi_k^{(2)} \leq 1$ for all $k$.
  Let $\gamma_k^{(1)}$and $\gamma_k^{(2)}$ be the coefficients defined by~\eqref{EQUATION} for each method.
  Then
  \begin{equation}\label{Gamma}
    0 < \gamma_k^{(1)} \leq \gamma_k^{(2)} \leq 1.
  \end{equation}
  Let $\alpha_k^{(1)}$ and $\alpha_k^{(2)}$ be the optimal step lengths associated with each method at iteration \(k\). We have
  \begin{subequations}\label{61}
    \begin{align}
      \alpha_k^{(2)} \leq \alpha_k^{(1)} ~~&\text{if } s_k^T A s_k > 0,\\
      \alpha_k^{(1)} \leq \alpha_k^{(2)} ~~&\text{if } s_k^T A s_k < 0.
    \end{align}
  \end{subequations}
  Finally, let $\alpha_k\PCG$ the optimal step length associated to PCG with preconditioner $H_0$ and let $\alpha_k^{\phi_k}$ be the optimal step length of any given  method in the convex Broyden class. We have
  \begin{subequations}\label{62}
    \begin{align}
      0 < \alpha_k\PCG \leq \alpha_k^{\phi_k} ~~&\text{if } s_k^T A s_k > 0,\\
      \alpha_k^{\phi_k} \leq \alpha_k\PCG < 0 ~~&\text{if } s_k^T A s_k < 0.
    \end{align}
  \end{subequations}
\end{theorem}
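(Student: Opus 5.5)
The plan is to work entirely with the recurrence~\eqref{EQUATION} of \Cref{theorem-1}, writing it as $\gamma_{k+1} = F(\gamma_k,\phi_k)$ with
\[
  F(\gamma,\phi) = \frac{\gamma a_k + \phi b_k}{\gamma a_k + b_k},
  \qquad a_k := g_k^T H_0 g_k, \quad b_k := g_{k+1}^T H_0 g_{k+1}.
\]
Since $H_0 \succ 0$ and the gradients are nonzero while the iteration proceeds, $a_k > 0$ and $b_k > 0$ regardless of the sign of $A$.

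\textbf{Step 1: the bounds on $\gamma$.} I would prove~\eqref{Gamma} by induction on $k$, starting from $\gamma_0^{(1)} = \gamma_0^{(2)} = 1$, because $d_0 = -H_0 g_0 = d_0\PCG$ for every method. I need two elementary facts about $F$ on the region $\gamma > 0$, $0 \le \phi \le 1$, where its denominator is positive.

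First, $F(\gamma,\phi) \in (0,1]$. The numerator is positive because $\gamma a_k > 0$ and $\phi b_k \ge 0$. The numerator is at most the denominator because $\phi \le 1$.

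Second, $F$ is nondecreasing in each argument. Monotonicity in $\phi$ is immediate. In $\gamma$, a direct computation gives
\[
  \partial_\gamma F = \frac{a_k b_k (1-\phi)}{(\gamma a_k + b_k)^2} \ge 0 .
\]

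Given $0 < \gamma_k^{(1)} \le \gamma_k^{(2)} \le 1$, these facts yield the chain
\[
  \gamma_{k+1}^{(1)} = F(\gamma_k^{(1)},\phi_k^{(1)})
  \le F(\gamma_k^{(2)},\phi_k^{(1)})
  \le F(\gamma_k^{(2)},\phi_k^{(2)})
  = \gamma_{k+1}^{(2)} \le 1 ,
\]
and positivity follows from the first fact.

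\textbf{Step 2: the hypotheses of \Cref{theorem-1} hold along the way.} This is the step I expect to be the main obstacle, since $A$ is not assumed PD, so \Cref{corollary-10} does not apply directly. I would handle it inside the same induction. Given $\gamma_k > 0$, \Cref{lemma-9} gives $\phi_k^c = -\gamma_k a_k / b_k < 0 \le \phi_k$, so $\phi_k \ne \phi_k^c$. Also,~\eqref{ykHkyk} gives $y_k^T H_k y_k = \gamma_k a_k + b_k > 0$, so~\eqref{cond2} holds. Condition~\eqref{cond1} is inherited from the standing assumption that the PCG iterations are well defined, i.e.\ $d_k^T A d_k \ne 0$. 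Hence \Cref{theorem-1} applies at step $k$ and the induction closes.

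\textbf{Step 3: the step lengths.} Both methods and PCG produce the same step:
\[
  s_k = \alpha_k^{(i)} d_k^{(i)} = \alpha_k^{(i)} \gamma_k^{(i)} d_k\PCG = \alpha_k\PCG d_k\PCG .
\]
Therefore $\alpha_k^{(i)} = \alpha_k\PCG / \gamma_k^{(i)}$, and similarly $\alpha_k^{\phi_k} = \alpha_k\PCG / \gamma_k^{\phi_{k-1}}$.

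From \Cref{Algo:PCG}, $\alpha_k\PCG = a_k / (d_k\PCG)^T A d_k\PCG$. Its sign is the sign of $s_k^T A s_k = (\alpha_k\PCG)^2 (d_k\PCG)^T A d_k\PCG$.

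If $s_k^T A s_k > 0$, then $\alpha_k\PCG > 0$. In that case $0 < \gamma^{(1)} \le \gamma^{(2)} \le 1$ gives $\alpha_k\PCG \le \alpha_k\PCG/\gamma^{(2)} \le \alpha_k\PCG/\gamma^{(1)}$. This yields the first inequality in~\eqref{61} and the first case of~\eqref{62}, using $\gamma_k^{\phi_{k-1}} \in (0,1]$ from Step 1.

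If $s_k^T A s_k < 0$, then $\alpha_k\PCG < 0$ and dividing by the $\gamma$'s reverses both inequalities. This gives the remaining cases of~\eqref{61} and~\eqref{62}.
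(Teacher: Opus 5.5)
Your proof is correct and follows essentially the paper's argument: induction on the recurrence~\eqref{EQUATION} from $\gamma_0 = 1$, positivity via $\phi_k^c < 0 \le \phi_k$ (\Cref{lemma-9}), monotonicity of the update in both $\gamma_k$ and $\phi_k$, and finally $\gamma_k = \alpha_k\PCG/\alpha_k^{\phi_k}$ with $\alpha_k\PCG$ carrying the sign of $s_k^T A s_k$; the paper gets the monotonicity by rewriting the update as $1 - b_k\,|\phi_k - 1|/(\gamma_k a_k + b_k)$ instead of differentiating. Your Step~2 is more careful than the paper: it checks the hypotheses of \Cref{theorem-1} directly through \Cref{lemma-9} and~\eqref{ykHkyk}, whereas the paper appeals to \Cref{theorem-2}, which tacitly requires $A \succ 0$ even though the statement covers $s_k^T A s_k < 0$.
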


\begin{proof}
  By \Cref{theorem-2} and \Cref{lemma-9}, $\gamma_{k+1}^{\phi_k} > 0$ if $\phi_k > \phi_k^c$ and $\gamma_k^{\phi_{k-1}} > 0$.
  Because $\phi_k^c < 0$ when $\gamma_k^{\phi_{k-1}} \geq 0$, it is equivalent to $\phi_k \geq 0$ and $\gamma_k^{\phi_{k-1}} > 0$.
  In addition $\gamma_0^{\phi_k} = 1 > 0$ so $\gamma_k^{\phi_{k-1}} > 0$ for all $k$.
  By noting that
  \begin{equation}\label{alpha_CG-over-alpha_phi}
    \gamma_k^{\phi_{k-1}} = \alpha_k\PCG / \alpha_k^{\phi_k},
  \end{equation}
  it follows that $\alpha_k^{\phi_k}$ has the same sign as $\alpha_k\PCG$, and by transitivity $\alpha_k^{(1)}$ and $\alpha_k^{(2)}$ have the same sign as well.
  Let $\gamma_{k+1}^{(1)}$ and $\gamma_{k+1}^{(2)}$ obtained by replacing $\phi_k$ by $\phi_k^{(1)}$ and $\phi_k^{(2)}$ in~\eqref{EQUATION}.
  Showing by induction that for all $k$, $\gamma_k^{(1)} \leq \gamma_k^{(2)}$ if $0 \leq \phi_k^{(1)} \leq \phi_k^{(2)} \leq 1$.
  To begin, we have $\gamma_0^{(1)} = \gamma_0^{(2)} = 1$ as $d_0 = -H_0 g_0$.
  Assuming that $\gamma_k^{(1)} \leq \gamma_k^{(2)}$ until one $k \geq 0$ and showing that it holds true at the rank $k+1$.
  We have for $i=1,2$
  \begin{align}
    \gamma_{k+1}^{(i)} &= \frac{\gamma_k^{(i)} g_k^T H_0 g_k + \phi_k^{(i)} g_{k+1}^T H_0  g_{k+1}}{\gamma_k^{(i)} g_k^ T H_0 g_k + g_{k+1}^T H_0 g_{k+1}} = 1 + \frac{g_{k+1}^T H_0 g_{k+1}\left(\phi_k^{(i)}-1\right)}{\gamma_k^{(i)} g_k^T H_0 g_k + g_{k+1}^T H_0 g_{k+1}}\notag\\
    &= 1 - \frac{g_{k+1}^T H_0 g_{k+1}\left|\phi_k^{(i)} - 1\right|}{\gamma_k^{(i)} g_k^T H_0 g_k + g_{k+1}^T H_0 g_{k+1}}.\notag
  \end{align}
  The last relation is due to the fact that $\phi_k^{(1)} \in [0,1]$.
  Because $0 \leq \phi_k^{(1)} \leq \phi_k^{(2)} \leq 1$,
  \begin{equation}
    \left| \phi_k^{(1)} -1 \right| \geq \left| \phi_k^{(2)} -1 \right|.
  \end{equation}
  As by induction hypothesis $0 < \gamma_k^{(1)} \leq \gamma_k^{(2)}$, then
  \begin{equation}
    \gamma_k^{(1)} g_k^T H_0 g_k + g_{k+1}^T H_0 g_{k+1} \leq \gamma_k^{(2)} g_k^T H_0 g_k + g_{k+1}^T H_0 g_{k+1}.
  \end{equation}
  It follows that
  \begin{equation}
    \gamma_{k+1}^{(1)} \leq 1 - \frac{g_{k+1}^T H_0 g_{k+1}\left|\phi_k^{(2)}-1\right|}{\gamma_k^{(2)} g_k^T H_0 g_k + g_{k+1}^T H_0 g_{k+1}} = \gamma_{k+1}^{(2)}.
  \end{equation}
  In addition, it is clear that $\gamma_k^{\phi_{k-1}} \leq 1$ for all $0 \leq \phi_k \leq 1$.
  By induction, we have shown~\eqref{Gamma}.
  Furthermore, by~\eqref{alpha_CG-over-alpha_phi} we have
  \begin{equation}
    \left|\alpha_k^{(2)}\right| \leq \left|\alpha_k^{(1)}\right|.
  \end{equation}
  We obtain~\eqref{61} by noting that $\alpha_k^{(1)}$ and $\alpha_k^{(2)}$ have the same sign as $\alpha_k\PCG$, which in turn has the same sign as $s_k^T A s_k$ by~\eqref{exact-linesearch} and~\eqref{CG}.
  To finish, we establish~\eqref{62} by noting with~\eqref{dkBFGS} that $\alpha_k\PCG = \alpha_k\BFGS$ and by replacing $\alpha_k^{(2)}$ by $\alpha_k\PCG$ with $\phi_k^{(2)} = 1$.
\end{proof}

The step length behavior of \Cref{prop:optimal-step-lenghts} is illustrated in \Cref{alphaFig}.

\begin{figure}[ht]
  \begin{center}
    \includegraphics[scale = 0.60]{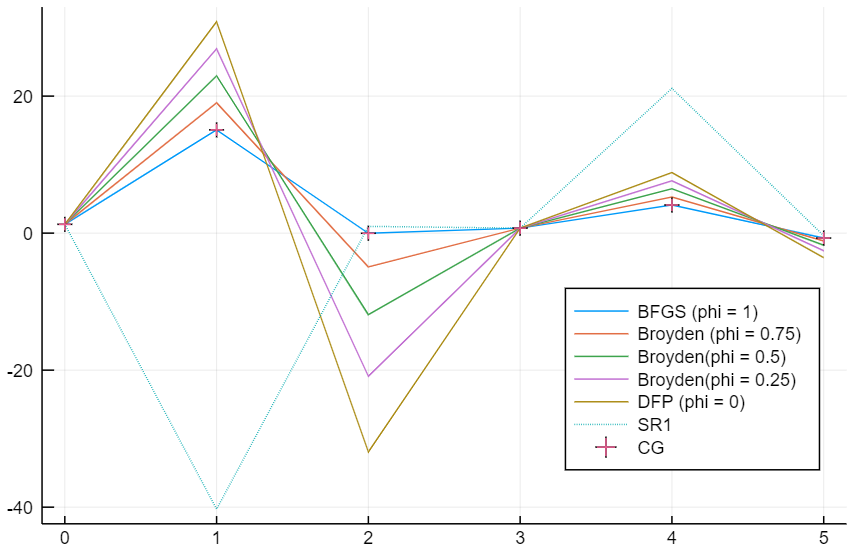}
    \caption{\label{alphaFig} Optimal step lengths $\alpha_k$ as a function of the iteration number $k$. The optimal step lengths of the convex Broyden class methods are ordered whereas the optimal step lengths of SR1 show an erratic behaviour.}
  \end{center}
\end{figure}

\section{The Limited-Memory BFGS Method}

Results of the previous section suggest that methods of the Broyden class could be used in place of PCG wherever it is typically used.
One such usage is in the computation of steps for the minimization of a nonlinear, potentially nonconvex function.
However, the usual disadvantage of Broyden methods is that they tend to generate dense approximations \(H_k\), and are thus impractical for large-scale problems.
In this section, we examine the relationship between the most widely-used limited-memory Broyden method and PCG.

The LBFGS($m$) method is a limited-memory quasi-Newton method.
Instead of storing a matrix $H_k$ of size $n \times n$, we store a limited amount $m$ of couples $(s_i, y_i)$ to reconstruct an approximation of the product $H_k g_k$.

Let
\begin{equation}
  U_k = I - \rho_k y_k s_k^T, \quad \text{and} \quad \rho_k = 1/y_k^T s_k.
\end{equation}
In~\citep{Liu-Nocedal-1989}, it is shown for $m = k$ and $H_k^0 = H_0$, that the matrix
\begin{align}\label{LBFGS}
  H_k^m = &\left(U_{k-1}^T...U_{k-m}^T\right)H_k^0\left(U_{k-m}...U_{k-1}\right)\\
  &+ \rho_{k-m} \left(U_{k-1}^T...U_{k-m+1}^T\right)s_{k-m} s_{k-m}^T \left(U_{k-m+1}...U_{k-1}\right)\notag\\
  &+ \rho_{k-m+1} \left(U_{k-1}^T...U_{k-m+2}^T\right)s_{k-m+1} s_{k-m+1}^T \left(U_{k-m+2}...U_{k-1}\right)\notag\\
  &+ \cdots + \rho_{k-1} s_{k-1} s_{k-1}^T\notag
\end{align}
is exactly the one calculated by the BFGS method~\eqref{BFGS}.
If we store a small number of the most recent couples $(s_i, y_i)$, we lose some information necessary to reconstruct $H_k\BFGS$ but we hope to have kept relevant information to determine a good descent direction $d_k^m = - H_k^m g_k$.


As it turns out, the LBFGS($m$) method possesses the same good properties as BFGS in relation with PCG.

\begin{theorem}\label{theorem-12}
  Assume that \(A\) is not necessarily PD.
  Let $x_0$ be any starting point and let $H_0$ be any SPD matrix.
  Consider, on the one hand, PCG applied to~\eqref{quadratic} from \(x_0\) with preconditioner \(H_0\).
  Consider, on the other hand, the LBFGS($m$) method with \(m \geq 1\) and exact line search to minimize~\eqref{quadratic} with \(H_k^0 = H_0\) for all \(k\).
  Assume that iterations \(k = 0, \ldots, j\) of both methods are well defined.
  If $s_k^T y_k \neq 0$ for all $k = 0, \ldots, j$, then, as long as $g_k \neq 0$, the search directions \(d_k^m\), step lengths \(\alpha_k^m\), and iterates $s_k$ generated by LBFGS($m$) are identical to those generated by PCG.
  Consequently, if \(j = n\), quadratic termination occurs.
\end{theorem}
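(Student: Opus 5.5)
We argue by induction on \(k\), following the pattern of the proof of \Cref{theorem-1} specialized to \(\phi_k = 1\). The key claim is that, whenever the first \(k\) iterates of LBFGS(\(m\)) coincide with those of PCG, we have
\begin{equation*}
  H_k^m g_k = H_0 g_k - \frac{g_k^T H_0 g_k}{g_{k-1}^T H_0 g_{k-1}}\, s_{k-1}\,\rho_{k-1}\, y_{k-1}^T H_0 g_{k-1}\cdot(\text{sign fix}),
\end{equation*}
or, stated more cleanly, that \(d_k^m = -H_k^m g_k = d_k\PCG\). The base case \(k=0\) is immediate because \(d_0^m = -H_0 g_0 = d_0\PCG\). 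The exact line searches then agree as long as \((d_k\PCG)^T A d_k\PCG \neq 0\), which is equivalent to \(s_k^T y_k \neq 0\). Hence \(\alpha_k^m = \alpha_k\PCG\) and \(s_k\) coincide.

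For the inductive step, assume \(s_0,\dots,s_k\) coincide with the PCG steps and \(g_{k+1}\neq 0\). We evaluate \(H_{k+1}^m g_{k+1}\) with the two-loop structure of~\eqref{LBFGS}, applying the factors \(U_i = I - \rho_i y_i s_i^T\) to \(g_{k+1}\) from the right for \(i = k, k-1, \dots, k-m+1\).

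\begin{enumerate}
  \item By~\eqref{direcorhto}, \(s_i^T g_{k+1} = 0\) for all \(i \le k\). Therefore \(U_k g_{k+1} = g_{k+1}\), and inductively \(U_{k-m+1}\cdots U_k\, g_{k+1} = g_{k+1}\).
  \item The same orthogonality kills every rank-one term \(\rho_i (\cdots) s_i s_i^T (\cdots) g_{k+1}\), so these terms vanish.
  \item What remains is \(U_k^T\cdots U_{k-m+1}^T H_0 g_{k+1}\).
  \item By~\eqref{gradortho}, \(y_i^T H_0 g_{k+1} = (g_{i+1}-g_i)^T H_0 g_{k+1} = 0\) for \(i \le k-1\). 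The transposed factors \(U_i^T = I - \rho_i s_i y_i^T\) with \(i<k\) therefore act trivially on \(H_0 g_{k+1}\). Only \(U_k^T\) acts nontrivially, and it is applied last in the product \(U_k^T\cdots\).
  \item Careful ordering is needed here. \(U_k^T\) is the outermost factor, so the inner factors \(U_{k-1}^T,\dots\) act first on \(H_0 g_{k+1}\), leave it unchanged, and then \(U_k^T\) acts.
\end{enumerate}

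The result is
\[
  H_{k+1}^m g_{k+1} = H_0 g_{k+1} - \rho_k s_k\, y_k^T H_0 g_{k+1}.
\]
This holds for every \(m \ge 1\), which explains why the memory length is irrelevant.

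It remains to identify this vector with \(-d_{k+1}\PCG\). By~\eqref{ykgk}, \(y_k^T H_0 g_{k+1} = g_{k+1}^T H_0 g_{k+1}\). Since \(s_k = \alpha_k\PCG d_k\PCG\) and \(y_k^T d_k\PCG = g_k^T H_0 g_k\) by~\eqref{ykdk}, we get
\[
  \rho_k s_k = \frac{d_k\PCG}{g_k^T H_0 g_k}.
\]
Substituting gives
\[
  -H_{k+1}^m g_{k+1} = -H_0 g_{k+1} + \frac{g_{k+1}^T H_0 g_{k+1}}{g_k^T H_0 g_k}\, d_k\PCG = d_{k+1}\PCG
\]
by~\eqref{CG}. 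This closes the induction. Identical directions and identical exact step lengths give identical iterates. Quadratic termination for \(j=n\) follows from that of PCG, using \(A\)-conjugacy when the PCG iterations are well defined.

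The main obstacle is bookkeeping. We must verify that the orthogonality relations~\eqref{gradortho} and~\eqref{direcorhto} remain valid without \(A \succ 0\), using only that the PCG iterations \(0,\dots,j\) are well defined. We must also check that \(U_k^T\) is applied last rather than first. For the first point, I would note that the standard Hestenes–Stiefel induction for these relations uses only nonzero denominators \((d_i\PCG)^T A d_i\PCG\) and \(g_i^T H_0 g_i\), so it carries over verbatim. The second point is handled by reading the ordering directly off~\eqref{LBFGS}. When \(k+1 < m\), the memory is not yet full and the products are truncated at index \(0\); the same argument applies unchanged.
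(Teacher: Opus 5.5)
Your proposal is correct and follows essentially the same route as the paper. Both arguments use induction on $k$, with \eqref{direcorhto} making every $U_i$ and every rank-one term act trivially on $g_{k+1}$. Then \eqref{gradortho} collapses $U_k^T\cdots U_{k-m+1}^T H_0 g_{k+1}$ to $U_k^T H_0 g_{k+1}$, and \eqref{ykgk} with \eqref{ykdk} recovers $d_{k+1}\PCG$.

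Your remark that \eqref{gradortho}--\eqref{direcorhto} must be re-justified for indefinite $A$ using only nonzero denominators is a point the paper passes over silently. Separately, you should delete the garbled ``key claim'' display with the ``sign fix'', which is incorrect as written.
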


\begin{proof}
  As in \Cref{theorem-1}, we show that the directions $d_k^m$ generated by LBFGS($m$) are colinear to those generated by PCG~\eqref{CG}.
  We note that for $k=0$, $d_0^m = d_0\PCG = -H_0 g_0$.
  Assuming that we have the colinearity until a rank $k \geq 0$ and showing that it holds at the rank $k+1$ if $g_{k+1} \neq 0$.
  Under the induction hypothesis, the current points $x_1,..,x_{k+1}$ generated by LBFGS($m$) are identical to those generated by CG.
  Henceforth, by~\eqref{direcorhto}, $s_i^T g_{k+1} = 0$ for all $i=0,...,k$.
  So, we have
  \begin{equation}
    U_i g_{k+1} = \left(I - \rho_i y_i s_i^T \right) g_{k+1} = g_{k+1}, ~i=0,...,k.
  \end{equation}
  By~\eqref{gradortho}
  \begin{equation}
    U_i^T H_0 g_{k+1} = \left(I - \rho_i s_i y_i^T \right) H_0 g_{k+1} = H_0 g_{k+1}, ~i=0,...,k-1.
  \end{equation}
  By applying these two relations to~\eqref{LBFGS}, we have,
  \begin{equation}\label{HkgkLBFGS}
    H_{k+1}^m g_{k+1} = \left(U_k^T...U_{k+1-m}^T\right) H_{k+1}^0 g_{k+1}.
  \end{equation}
  With $H_{k+1}^0 = H_0$, it follows that
  \begin{align}
    -H_{k+1}^m g_{k+1} &= -U_k^T H_0 g_{k+1} = -H_0 g_{k+1} + \rho_k s_k \left(g_{k+1}^T - g_k^T\right) H_0 g_{k+1}\notag\\
    &= -H_0 g_{k+1} + \frac{g_{k+1}^T H_0 g_{k+1}}{y_k^T s_k} s_k\notag = -H_0 g_{k+1} + \frac{g_{k+1}^T H_0 g_{k+1}}{y_k^T d_k\PCG} d_k\PCG. \notag
  \end{align}
  So by~\eqref{ykdk} and~\eqref{CG},
  \begin{equation}
    d_{k+1}^m = -H_0 g_{k+1} + \frac{g_{k+1}^T H_0 g_{k+1}}{g_k^T H_0 g_k} d_k\PCG = d_{k+1}\PCG.
  \end{equation}
  So $d_{k+1}^m$ and $d_{k+1}\PCG$ are colinear because $H_0$ is non singular.
  It follows with an exact line search that the iterates $s_k$ are identical for both methods and for any $m \geq 1$.
\end{proof}

Clearly, a sufficient condition for both methods to be well defined in \Cref{theorem-12} is that \(A\) be PD.

\Cref{theorem-12} can be interpreted as follow.
In the BFGS update~\eqref{dkBFGS}, only the last couple $(y_k, s_k)$ is useful to calculate the descent direction $d_k^m$ in the quadratic case.
However, we will see in numerical experiments that the other couples in storage are important for the robustness and accuracy of LBFGS($m$).

\section{The Limited-Memory Full Orthogonalization Method}

In this section, we turn to another class of methods that are known, or can be shown, to coincide with PCG when applied to~\eqref{quadratic}: Krylov-subspace methods based on the Arnoldi process. We first present the general framework and then specialize to the case where $A$ is SPD.

\subsection{Arnoldi Process}
\phantomsection  

Given an initial vector $v_1$, the $k$th Krylov subspace is
\(\mathcal{K}_k(A, v_1) := \mathrm{span}\{v_1, Av_1, \ldots, A^{k-1} v_1\}\), \(k \geq 1\).
The Arnoldi process builds an orthonormal basis of $\mathcal{K}_k(A, v_1)$ via \Cref{Algo:Arnoldi}.

\begin{algorithm}[ht]
  \caption{%
    \label{Algo:Arnoldi}
    Arnoldi Process.
  }
  \begin{algorithmic}[1]
    \Require{} $v_1$ such that $\|v_1\|_2 = 1$
    \For{\(k = 1, 2, \dots\)}
    \State \(w = A v_k\)
    \For{\(i = 1, \dots, k\)}
    \State set \(t_{i, k} = v_i^T w\) and \(w \leftarrow w - t_{i,k} v_i\)
    \EndFor
    \State \(t_{k+1,k} v_{k+1} = w\)
    \Comment{\(t_{k+1,k} > 0\) such that \(\|v_{k+1}\| = 1\)}
    \EndFor
  \end{algorithmic}
\end{algorithm}

By construction, \(k\) iterations of \Cref{Algo:Arnoldi} in exact arithmetic generate $V_k = \begin{bmatrix} v_1 & \ldots & v_k \end{bmatrix} \in \R^{n\times k}$ with orthonormal columns and a $k \times k$ upper Hessenberg matrix \(T_k\) whose nonzero entries are the $t_{i,j}$ such that
\begin{subequations}
\begin{align}
  \label{eq:arnoldi_process}
  A V_k & = V_k T_k + t_{k+1, k} v_{k+1} e_k^T,
  \\
  \label{eq:hesseberg_matrix}
  V_k^T A V_k & = T_k.
\end{align}
\end{subequations}
In floating-point arithmetic,~\eqref{eq:arnoldi_process} holds to machine precision, but~\eqref{eq:hesseberg_matrix} does not hold.
We slightly depart from standard notation in the literature here and denote the Hessenberg matrix \(T_k\).
The reason for this notation is that, in the present context, we apply \Cref{Algo:Arnoldi} to symmetric \(A\), in which case \(T_k\) becomes tridiagonal in exact arithmetic---but not in floating-point arithmetic.

\subsection{The Full orthogonalization method (FOM)}

Arnoldi-based methods are appropriate to solve a linear system \(Ax = b\) where \(A\) is square, and not necessarily symmetric.
Given an initial guess \(x_0\), we let \(r_0 = b - A x_0\) be the initial residual.
It is common to define \(\beta = \|r_0\|_2\).

All such methods seek an approximate solution of the form
\begin{equation}\label{eq:x_l}
  x_k = x_0 + V_k w_k,
\end{equation}
where $V_k$ is generated by \Cref{Algo:Arnoldi} with $v_1 = r_0 / \beta$, and $w_k \in \R^k$ is chosen to enforce a suitable condition.

We can express the residual $r_k = b - A x_k$ in terms of $w_k$ as
\begin{equation}\label{eq:r_l_sym}
  r_k = b - A x_k
  = b - A (x_0 + V_k w_k)
  = r_0 - A V_k w_k
  = \beta v_1 - A V_k w_k.
\end{equation}

The full orthogonalization method (FOM) is one such method, and computes an approximate solution~\eqref{eq:x_l} by imposing the Galerkin condition on the residual, i.e., $r_k \perp \mathcal{K}_k(A,r_0)$ for \(k \geq 1\).
In exact arithmetic, $w_k$ is obtained by solving the projected system
\begin{equation}\label{eq:orthogonality_property}
  V_k^T r_k = V_k^T ( \beta v_0 - A V_k w_k) = 0
  \iff
  V_k^T A V_k w_k =  \beta e_1,
\end{equation}
where $e_1 = [1,0,\ldots,0]\in \R^{k}$.
\Cref{alg:FOM} summarizes the procedure.

\begin{algorithm}[ht]
  \caption{%
    \label{alg:FOM}
    Full Orthogonalization Method (FOM).
  }
  \begin{algorithmic}[1]
    \State Choose \(x_0 \in \R^n\), compute \(r_0 = b - A x_0\), and set \(k \geq 1\).
    \State Obtain \(V_k\) and \(T_k\) from \Cref{Algo:Arnoldi} initialized with \(v_1 = r_0 / \beta\).
    \State%
    \label{stp:fom-system}%
    Solve $T_k w_k = \beta e_1$ and set $x_k\FOM = x_0 + V_k w_k$
  \end{algorithmic}
\end{algorithm}

Our first result is well known in linear algebra circles, but perhaps not so well known in the optimization community.

\begin{proposition}%
  \label{prop:fom=cg}
  Let \(A = A^T\), \(b \in \R^n\) and \(x_0 \in \R^n\).
  Suppose \Cref{alg:FOM} is well defined for values \(k = 1, \ldots, j\) in the sense that each \(T_k\) is nonsingular.
  Then, each \(x_k\FOM = x_k\PCG\) with \(H_0 = I\) and the same initial \(x_0\).
\end{proposition}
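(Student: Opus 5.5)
The plan is to characterize both sequences of iterates by the same Galerkin conditions on the same Krylov subspaces, and then conclude by uniqueness.

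First, the FOM side. With \(v_1 = r_0/\beta\) and \(g_0 = -r_0\), the Arnoldi basis spans \(\mathcal{K}_k(A, r_0) = \mathcal{K}_k(A, g_0)\). Because \(A\) is symmetric, \eqref{eq:hesseberg_matrix} shows that \(T_k = V_k^T A V_k\) is symmetric tridiagonal. Since \(T_k\) is assumed nonsingular, \(w_k\) is uniquely determined. Moreover, \(x_k\FOM\) is the unique vector satisfying both \(x_k\FOM \in x_0 + \mathcal{K}_k(A, r_0)\) and \(r_k \perp \mathcal{K}_k(A, r_0)\). For uniqueness, suppose \(x_0 + V_k w\) satisfies the orthogonality condition. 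Then \(V_k^T A V_k w = \beta e_1\), so \(w = w_k\).

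Second, the PCG side with \(H_0 = I\). By induction on \(k\) I will show two facts: \(x_k\PCG - x_0 \in \operatorname{span}\{d_0\PCG, \ldots, d_{k-1}\PCG\}\), and this span together with \(\operatorname{span}\{g_0, \ldots, g_{k-1}\}\) both equal \(\mathcal{K}_k(A, g_0)\). The induction uses the recurrences \(g_{k+1} = g_k + \alpha_k A d_k\PCG\) and \(d_{k+1}\PCG = -g_{k+1} + \beta_k d_k\PCG\) from \eqref{CG}. Then \eqref{gradortho} gives \(g_k \perp g_i\) for \(i < k\), so \(r_k\PCG = -g_k \perp \mathcal{K}_k(A, r_0)\). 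Thus \(x_k\PCG\) satisfies exactly the Galerkin characterization of the FOM iterate, and uniqueness yields \(x_k\FOM = x_k\PCG\) for \(k = 1, \ldots, j\).

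The main obstacle is that the statement does not assume \(A \succ 0\), whereas the orthogonality results \eqref{gradortho}--\eqref{direcorhto} were stated under strict convexity. I would handle this by re-proving the orthogonality directly under the weaker hypothesis that the PCG iterations are well defined, meaning \((d_i\PCG)^T A d_i\PCG \neq 0\) and \(g_i^T g_i \neq 0\). The classical Hestenes--Stiefel induction proving \(g_k^T g_i = 0\) and \((d_k\PCG)^T A d_i\PCG = 0\) uses only these nonzero denominators and the symmetry of \(A\), never positivity, so it goes through unchanged.

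It then remains to check that nonsingularity of \(T_k\) corresponds to PCG being well defined. The PCG recurrence is equivalent to an \(LDL^T\) factorization of \(T_k\) without pivoting, with pivots proportional to \((d_i\PCG)^T A d_i\PCG\). I would therefore argue that, as long as the Arnoldi process has not broken down, nonsingularity of \(T_1, \ldots, T_j\) is equivalent to all leading pivots being nonzero, which is exactly PCG being well defined through iteration \(j\).

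If Arnoldi breaks down early (\(t_{k+1,k} = 0\)), the Krylov space is invariant under \(A\). Then \(x_k\FOM\) is the exact solution and PCG reaches \(g_k = 0\) at the same step, so the iterates still agree.
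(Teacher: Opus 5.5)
Your argument is correct, but it takes a different route from the paper's.

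\textbf{The paper's route.} The paper observes that symmetry of $A$ makes $T_k = V_k^TAV_k$ symmetric tridiagonal, so \Cref{Algo:Arnoldi} reduces to the Lanczos process. It then cites \citet{paige-saunders-1975} for the standard Lanczos--CG correspondence: solving $T_kw_k=\beta e_1$ through a factorization of $T_k$ reproduces the CG recurrences.

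\textbf{Your route.} You identify the two iterates through the Galerkin characterization. Both lie in $x_0+\mathcal{K}_k(A,r_0)$ with residual orthogonal to $\mathcal{K}_k(A,r_0)$, and nonsingularity of $T_k$ makes that point unique. The factorization is needed only to match the hypotheses.

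\textbf{What your version gains.} It is self-contained, and it makes explicit two points the paper leaves inside the citation.
\begin{itemize}
  \item The orthogonality relations~\eqref{gradortho}, which the paper states only for strictly convex $q$, still hold for indefinite $A$. The Hestenes--Stiefel induction uses only the nonzero denominators $(d_i\PCG)^TAd_i\PCG$ and $g_i^Tg_i$.
  \item Nonsingularity of $T_1,\dots,T_j$ is exactly what keeps PCG well defined through iteration $j$.
\end{itemize}

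\textbf{Making the pivot step concrete.} Write $g_i=\sigma_i v_{i+1}$ with $|\sigma_i|=\|g_i\|$, which follows from $g_i\in\mathcal{K}_{i+1}$ and $g_i\perp\mathcal{K}_i$. Combine this with $g_i=-d_i\PCG+\beta_{i-1}d_{i-1}\PCG$ and conjugacy. This gives $T_k=\Sigma_k^{-1}B_k^T\,\mathrm{diag}((d_0\PCG)^TAd_0\PCG,\dots,(d_{k-1}\PCG)^TAd_{k-1}\PCG)\,B_k\Sigma_k^{-1}$, where $B_k$ is unit upper bidiagonal and $\Sigma_k=\mathrm{diag}(\sigma_0,\dots,\sigma_{k-1})$. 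Hence $\det T_k=\prod_{i=0}^{k-1}(d_i\PCG)^TAd_i\PCG/\|g_i\|^2$, which closes your induction.

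\textbf{What the paper's version gains.} The factorization viewpoint is shorter. It also directly produces the pivot identities $u_{k+1,k+1}=1/\alpha_k\PCG$ used later in \Cref{alpha_k} for curvature detection in DIOM($m$). Your determinant identity recovers these as well, as ratios of consecutive leading minors.
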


\begin{proof}
  When \(A = A^T\),~\eqref{eq:hesseberg_matrix} shows that \(T_k = T_k^T\), and hence, is tridiagonal, at least in exact arithmetic.
  Indeed, in that case, \Cref{Algo:Arnoldi} coincides with the \citet{lanczos-1950} process for symmetric matrices.
  Moreover, Line~\ref{stp:fom-system} of \Cref{alg:FOM} yields precisely \(x_k\PCG\) with \(H_0 = I\) \citep{paige-saunders-1975}.
\end{proof}

A sufficient condition for \Cref{prop:fom=cg} to hold is that \(A\) be PD.
Indeed, in that case,~\eqref{eq:hesseberg_matrix} shows that \(T_k\) is also PD, and hence nonsingular.
In the remainder of this section and the next, any reference to \Cref{Algo:PCG} implicitly assumes that \(H_0 = I\) and that both methods are initialized from the same \(x_0\).

\begin{proposition}\label{eq:arnoldi_res}
  The residual in \Cref{alg:FOM} satisfies
  \begin{equation*}
    r_0 = \beta v_1, \qquad
    r_k = - t_{k+1, k} (e_k^T w_k) v_{k+1} \quad (k \geq 1).
  \end{equation*}
  where $e_k$ denotes the $k$th canonical basis vector of $\R^k$.
\end{proposition}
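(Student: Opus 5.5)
The identity \(r_0 = \beta v_1\) holds by construction, since \Cref{alg:FOM} initializes the Arnoldi process with \(v_1 = r_0/\beta\) and \(\beta = \|r_0\|_2\). For \(k \geq 1\), the plan is to start from the residual expression~\eqref{eq:r_l_sym}, namely \(r_k = \beta v_1 - A V_k w_k\). We then substitute the Arnoldi relation~\eqref{eq:arnoldi_process} for \(A V_k\). This gives
\begin{equation*}
  r_k = \beta v_1 - V_k T_k w_k - t_{k+1,k} v_{k+1} e_k^T w_k .
\end{equation*}

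Next we use the fact that \(w_k\) solves the projected system on Line~\ref{stp:fom-system} of \Cref{alg:FOM}, that is, \(T_k w_k = \beta e_1\). It follows that \(V_k T_k w_k = \beta V_k e_1 = \beta v_1\). The first two terms therefore cancel, leaving
\begin{equation*}
  r_k = -t_{k+1,k} (e_k^T w_k) v_{k+1},
\end{equation*}
which is the claimed formula.

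The only point requiring care is which relation we rely on. We deliberately use~\eqref{eq:arnoldi_process} rather than~\eqref{eq:hesseberg_matrix}, because the former is the identity that holds even in floating-point arithmetic; the result is thus robust in that sense. We also assume \(T_k\) is nonsingular, so that \(w_k\) is well defined, as in \Cref{prop:fom=cg}. Beyond that, the argument is a direct substitution and no real obstacle is expected.
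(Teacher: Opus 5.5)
Your proposal is correct and follows the paper's proof essentially verbatim: substitute the Arnoldi relation~\eqref{eq:arnoldi_process} into $r_k = \beta v_1 - A V_k w_k$, then use $T_k w_k = \beta e_1$ and $V_k e_1 = v_1$ to cancel the first two terms. Your remark that $T_k$ must be nonsingular for $w_k$ to be well defined is a sensible explicit hypothesis that the paper leaves implicit.
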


\begin{proof}
  From~\eqref{eq:r_l_sym},~\eqref{eq:arnoldi_process} and $T_k w_k = \beta e_1$,
  \begin{equation*}
    r_k
    = \beta v_1 - \bigl(V_k T_k + t_{k+1, k} v_{k+1} e_k^T\bigr)w_k
    = \beta v_1 - V_k \beta e_1 - t_{k+1, k} v_{k+1} e_k^T w_k.
  \end{equation*}
  Since $V_k e_1 = v_1$, the first two terms cancel, which establishes the result.
\end{proof}

Consider the LU factorization $T_k = L_k U_k$, where $L_k$, with entries $\ell_{i,j}$, is unit lower bidiagonal, and $U_k$, with entries $u_{i,j}$, is upper triangular.
To solve \(T_k w_k = \beta e_1\), we first solve \(L_k z_k = \beta e_1\) followed by \(U_k w_k = z_k\).
It will be useful to express $r_k$ in terms of \(L_k\), \(U_k\), and \(z_k\).

\begin{corollary}\label{prop:diom_res}
  For \(k \geq 1\),
  \begin{equation*}
    r_k = -t_{k+1, k}
    \frac{
      \zeta_k
    }{
      u_{k,k}
    }
    v_{k+1}
    = -t_{k+1, k}
    \frac{
      \beta
    }{
      u_{k,k}
    }
    \Big(\prod_{i=2}^{k}(-\ell_{i,i-1})\Big) v_{k+1},
  \end{equation*}
  where \(\zeta_k\) is the last component of \(z_k\).
\end{corollary}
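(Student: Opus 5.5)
The plan is to start from \Cref{eq:arnoldi_res}, which gives \(r_k = -t_{k+1,k}(e_k^T w_k)\,v_{k+1}\), and compute the last component \(e_k^T w_k\) of the solution of \(T_k w_k = \beta e_1\) through the factorization \(T_k = L_k U_k\). Since \(U_k\) is upper triangular, back substitution on \(U_k w_k = z_k\) starts with the last row, which involves only the diagonal entry: \(u_{k,k}\,(e_k^T w_k) = \zeta_k\). Hence \(e_k^T w_k = \zeta_k / u_{k,k}\). This division is legitimate because \(T_k\) is assumed nonsingular, as in \Cref{prop:fom=cg}, and the LU factorization without pivoting is assumed to exist, so that \(u_{k,k} \neq 0\). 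Substituting into \Cref{eq:arnoldi_res} gives the first equality.

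For the second equality, I will compute \(\zeta_k\) from \(L_k z_k = \beta e_1\). Here \(L_k\) is unit lower bidiagonal; this holds because \(T_k\) is tridiagonal in the symmetric case, or more generally upper Hessenberg, so the \(L\) factor has only one subdiagonal. Forward substitution then gives \(\zeta_1 = \beta\) and \(\zeta_i + \ell_{i,i-1}\zeta_{i-1} = 0\), i.e. \(\zeta_i = -\ell_{i,i-1}\zeta_{i-1}\) for \(i = 2,\dots,k\). A trivial induction yields \(\zeta_k = \beta \prod_{i=2}^{k}(-\ell_{i,i-1})\), with the convention that the empty product equals \(1\) when \(k=1\). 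Substituting this into the first expression finishes the proof.

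The only point needing care is justifying that \(L_k\) is bidiagonal, so that the recurrence involves a single previous term. This follows from the fact that the LU factors of a Hessenberg matrix inherit lower bandwidth one. Beyond that, and the existence of the factorization, the argument is routine.
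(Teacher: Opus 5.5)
Your proposal is correct and follows the paper's proof: back substitution in $U_k w_k = z_k$ gives $e_k^T w_k = \zeta_k / u_{k,k}$, you substitute this into the residual formula of the preceding proposition, and forward substitution in $L_k z_k = \beta e_1$ gives the product formula for $\zeta_k$. Your remarks on why $L_k$ is bidiagonal and why $u_{k,k} \neq 0$ only make explicit assumptions that the paper leaves implicit.
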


\begin{proof}
  Forward substitution in $L_k z_k = \beta e_1$ yields $\zeta_k = \beta \prod_{i=2}^{k}(-\ell_{i,i-1})$.
  By \Cref{eq:arnoldi_res}, we must compute $\omega_k := e_k^T w_k$, i.e., the \(k\)-th component of \(w_k\).
  Back substitution into $U_k w_k = z_k$ gives \(\omega_k = \zeta_k / u_{k,k}\).
\end{proof}

\Cref{alg:FOM} is akin to quasi-Newton methods in the sense that it typically generates a dense upper Hessenberg matrix \(T_k\), and storage requirements grow quadratically with the number of iterations.
Although we may expect that extra off-diagonals in the upper triangle of \(T_k\) that appear from loss of orthogonality between the basis vectors \(v_k\) in floating-point arithmetic would improve robustness compared to PCG, which simply ignores them, such memory requirements may be prohibitive in practice.
In the next section, we review a limited-memory variant of FOM, which is then akin to limited-memory quasi-Newton methods.

In order to establish a connection with PCG, we now introduce the matrix \(P_k\).
Under the assumption that \(T_k\) is nonsingular, the update of \(x_k\FOM\) can be written
\begin{equation}%
  \label{eq:xk-fom-update}
  x_k\FOM = x_0 + V_k w_k = x_0 + V_k U_k^{-1} L_k^{-1} (\beta e_1) = x_0 + P_k z_k = x_{k-1}\FOM + \zeta_k p_k,
\end{equation}
where
\begin{equation}%
  \label{eq:def-P-z}
  P_k := V_k U_k^{-1} = \begin{bmatrix} p_1\DIOM \cdots p_k\DIOM \end{bmatrix},
  \quad \text{and} \quad
  z_k := L_k^{-1} (\beta e_1) = (\zeta_1, \ldots, \zeta_k).
\end{equation}
Because \(U_k\) is upper triangular, the columns of \(P_k\) can be determined from the linear system with multiple right-hand sides
\begin{equation}%
  \label{eq:determine-P}
  U_k^T P_k^T = V_k^T
\end{equation}
as the iterations progress.

\subsection{The direct incomplete orthogonalization method (DIOM)}\label{DIOM}

Instead of orthogonalizing each new \(v_k\) against all previous ones, DIOM($m$) uses a sliding window of size $m$, reducing storage and computational costs while retaining the most recent information of the Krylov-subspace structure.
That is similar to how limited-memory quasi-Newton methods only retain the most recent curvature information.
Such strategy is quite different from \emph{restarted} FOM, which is sometimes called FOM(\(m\)), and which simply discards all the \(v_k\) vectors when it reaches iteration \(m\), only to restart the process from \(x_m\FOM\) as new initial guess.
By \Cref{prop:fom=cg}, only the first \(m\) iterations of FOM(\(m\)), if they are well defined, coincide with those of CG.
Afterwards, the two methods differ.
We do not investigate restarted methods further.

In DIOM(\(m\)), instead of being upper Hessenberg, \(T_k\) becomes a banded upper Hessenberg matrix with upper semi-bandwidth \(m\), and $x_k$ can be updated using an LU factorization that can be updated cheaply from one iteration to the next.

We summarize the resulting iteration as \Cref{Algo:DIOM}.

\begin{algorithm}[ht]
  \caption{DIOM(\(m\))}
  \label{Algo:DIOM}
  \begin{algorithmic}[1]
    \State Choose $x_0$, set $r_0 = b-A x_0$, $v_1 \gets r_0 / \beta$, \(\zeta_1 = \beta\), and \(m \geq 1\).
    \For{$k = 1, 2,\ldots$}
    \State \(m_k = \max(1, \, k - m + 1)\)
    \State%
    \label{stp:begin-arnoldi}%
    \(w = A v_k\) \Comment{incomplete orthogonalization process}
    \For{\(i = m_k, \dots, k\)}
    \State set \(t_{i,k} = v_i^T w\) and \(w \leftarrow w - t_{ik} v_i\)
    \EndFor
    \State%
    \label{stp:end-arnoldi}
    \(t_{k+1,k} v_{k+1} = w\)
    \Comment{\(t_{k+1,k} > 0\) such that \(\|v_{k+1}\| = 1\)}
    \For{$i = m_k,\ldots,k$}%
    \label{stp:begin-lu}
    \Comment{update LU factorization of \(T_k\)}
    \If{$i = 1$}
    $u_{1,k} = t_{1,k}$  \Comment{update last column of $U_k$}
    \Else \
    $u_{i,k} = t_{i,k} - \ell_{i,i-1} \, u_{i-1,k}$ \label{line:updateU}
    \EndIf%
    \label{stp:end-lu}
    \EndFor%
    \label{stp:ukk}%
    \If{$u_{k,k}=0$} \textbf{stop} \Comment{abort: \(A\) is singular}
    \EndIf
    \State%
    \label{stp:pk-diom}%
    $p_k\DIOM = \Big(v_k - \sum_{i = m_k}^{k-1} u_{i,k} p_i\DIOM\Big) / u_{k,k}$
    \Comment{last equation of~\eqref{eq:determine-P}}
    \State $x_k\DIOM = x_{k-1}\DIOM + \zeta_k p_k\DIOM$ \Comment{update iterate}
    \State%
    \label{stp:lkp1k}%
    $\ell_{k+1,k} = t_{k+1,k} / u_{k,k}$ \Comment{update the new subdiagonal of $L_k$}
    \State%
    \label{stp:zeta-k}%
    $\zeta_{k+1} = -\ell_{k+1,k} \, \zeta_k$ \Comment{last component of \(z_{k+1}\)}
    \EndFor
  \end{algorithmic}
\end{algorithm}

Similarly to \Cref{prop:fom=cg}, we have the following equivalence.

\begin{proposition}%
  \label{prop:diom=cg}
  Let \(A = A^T\), \(b \in \R^n\) and \(x_0 \in \R^n\).
  Suppose \Cref{Algo:DIOM} is well defined for values \(k = 1, \ldots, j\) in the sense that each \(T_k\) is nonsingular.
  Then, each \(x_k\DIOM = x_k\PCG\) with \(H_0 = I\) and the same initial \(x_0\) for any \(m \geq 1\).
\end{proposition}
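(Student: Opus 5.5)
The plan is to reduce DIOM($m$) to FOM and then invoke \Cref{prop:fom=cg}. The key observation is that, in exact arithmetic with \(A = A^T\), the full Arnoldi process already produces a tridiagonal \(T_k\). Truncating the orthogonalization to a window of width \(m \geq 1\) therefore discards nothing. I would proceed by induction on \(k\).

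The first step is to show that the vectors \(v_k\) and coefficients \(t_{i,k}\) produced by lines~\ref{stp:begin-arnoldi}--\ref{stp:end-arnoldi} of \Cref{Algo:DIOM} coincide with those of \Cref{Algo:Arnoldi}.

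\begin{itemize}
  \item Assume \(v_1,\dots,v_k\) are the Lanczos vectors, hence orthonormal. By symmetry, \(v_i^T A v_k = v_k^T A v_i\), and \(A v_i \in \mathrm{span}\{v_1,\dots,v_{i+1}\}\).
  \item Hence \(t_{i,k} = 0\) for \(i < k-1\). The full Gram--Schmidt loop therefore only subtracts components along \(v_{k-1}\) and \(v_k\).
  \item For \(m \geq 2\) the window \(\{m_k,\dots,k\}\) contains \(k-1\) and \(k\), so \(w\) is identical.
  \item For \(m = 1\) the window only contains \(k\). Here I would use the Lanczos three-term recurrence: in exact arithmetic \(v_{k-1}^T(Av_k - t_{k,k}v_k) = t_{k,k-1} \neq 0\). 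So DIOM(1) would \emph{not} remove the \(v_{k-1}\) component.
\end{itemize}

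This shows the claim cannot literally rest on identical Arnoldi vectors when \(m = 1\). I would therefore treat \(m \geq 2\) by the argument above, and handle \(m = 1\) by interpreting the window as including the symmetric Lanczos term, i.e., the tridiagonal recurrence. I expect pinning down exactly what \(m_k\) must cover to be the main obstacle. Concretely, I would verify that \(m \geq 2\) suffices, or read \(m\) as the upper semi-bandwidth, which is \(1\) for a tridiagonal \(T_k\).

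Once the \(v_k\) and the tridiagonal (banded) \(T_k\) agree with the full process, the LU updates in lines~\ref{stp:begin-lu}--\ref{stp:end-lu} are exactly Gaussian elimination without pivoting on \(T_k\). These updates are well defined since each leading block \(T_k\) is nonsingular, so \(u_{k,k} \neq 0\). Because \(T_k\) is tridiagonal, \(U_k\) is upper bidiagonal, and \(u_{i,k} = 0\) for \(i < k-1\). Consequently the recurrence in line~\ref{stp:pk-diom} uses only \(p_{k-1}\), and it yields precisely the \(k\)th column of \(P_k = V_k U_k^{-1}\) from~\eqref{eq:determine-P}.

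Lines~\ref{stp:lkp1k}--\ref{stp:zeta-k} produce the entries of \(z_k = L_k^{-1}(\beta e_1)\), matching \Cref{prop:diom_res}. Hence the DIOM update \(x_k\DIOM = x_{k-1}\DIOM + \zeta_k p_k\DIOM\) is exactly~\eqref{eq:xk-fom-update}, so \(x_k\DIOM = x_k\FOM\). \Cref{prop:fom=cg} then gives \(x_k\FOM = x_k\PCG\) with \(H_0 = I\). This holds for every \(k \leq j\) and every admissible \(m\).
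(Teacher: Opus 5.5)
Your argument is the paper's own. The full Arnoldi matrix is tridiagonal when \(A = A^T\), so a window containing \(v_{k-1}\) and \(v_k\) discards nothing, DIOM reduces to FOM, and \Cref{prop:fom=cg} gives CG. Matching the LU, \(p_k\DIOM\) and \(\zeta_k\) recurrences to~\eqref{eq:xk-fom-update} only makes explicit what the paper's two-line proof leaves implicit.

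Your worry about \(m=1\) is justified, and it is a defect of the statement and of the paper's proof, not of your argument. The paper's proof asserts that \(T_k\) has upper semi-bandwidth \(m\). With \(m_k = \max(1, k-m+1)\), however, the window holds \(m\) vectors, so the semi-bandwidth is \(m-1\). For \(m=1\), at \(k=2\) the entry \(t_{1,2} = t_{2,1} \neq 0\) is never computed, so \(T_2\) is lower bidiagonal. The conclusion itself, not just your proof technique, already fails at \(k=2\):
\begin{itemize}
  \item Take \(A = \mathrm{diag}(1,2,3)\), \(b = (1,1,1)^T\) and \(x_0 = 0\).
  \item DIOM(1) gives \(T_2 = \bigl[\begin{smallmatrix} 2 & 0 \\ \sqrt{2/3} & 2 \end{smallmatrix}\bigr]\), which is nonsingular with \(u_{1,1} = u_{2,2} = 2\), and \(x_2 = (3/4, 1/2, 1/4)^T\).
  \item CG gives \(x_2 = (9/10, 3/5, 3/10)^T\).
\end{itemize}

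So do not try to rescue \(m=1\) by reinterpreting the window. State what you have actually proved: the result holds for \(m \ge 2\) with \Cref{Algo:DIOM} as written. Equivalently, it holds for any upper semi-bandwidth \(\ge 1\), which is what the paper's proof implicitly assumes. The paper's Discussion remark that DIOM coincides with CG at \(m=1\) carries the same off-by-one.
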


\begin{proof}
  The proof is similar to that of \Cref{prop:fom=cg} by noting that \(T_k\) is banded upper Hessenberg with upper semi-bandwidth \(m\).
  Hence, in exact arithmetic, \(T_k = T_k^T\) is tridiagonal, and the equivalence with PCG follows the same arguments.
\end{proof}

Because DIOM(\(m\)) orthogonalizes the most recent basis vector \(v_k\) against the \(m\) previous vectors, it can be interpreted as a form of reorthogonalization for PCG, albeit one that follows naturally from \Cref{Algo:Arnoldi}.

If \(T_k\), \(k = 1, \ldots, j\) are nonsingular, each \(u_{k,k} \neq 0\), and \(U_k = D_k \hat{U}_k\), where \(D_k\) is diagonal and \(\hat{U}_k\) is unit upper triangular.
The elements on the diagonal of \(D_k\) are \(u_{1,1}, \ldots, u_{k,k}\).
By~\eqref{eq:hesseberg_matrix} and~\eqref{eq:def-P-z}, \(P_k^T A P_k = U_k^{-T} L_k = \hat{U}_k^{-T} D_k^{-1} L_k\).
If \(A = A^T\), it must be that \(\hat{U}_k = L_k^T\), and hence \(P_k^T A P_k = D_k^{-1}\) is diagonal, which shows that the directions \(p_1\DIOM, \ldots, p_k\DIOM\) are conjugate.
In particular, if each \(T_k\), \(k = 1, \ldots, j\) were PD, as would occur, among other possible situations, if \(A\) were itself PD, \(D_k\) would also be PD, i.e., each \(u_{k, k} > 0\).
In the next section, we state properties of \Cref{Algo:DIOM} that hold when such positive curvature is observed.

\subsection{\texorpdfstring{DIOM(\(m\))}{DIOM(m)} in the presence of positive curvature}\label{subsec:DIOM_TR}

Under the assumption that positive curvature is observed over iterations \(k = 1, \ldots, j\), we show that $p_k\DIOM$ is not necessarily a descent direction for~\eqref{quadratic}, and relate it to $d_k\PCG$.

\begin{lemma}%
  \label{lem:zetak}
  For $k=1, 2, \ldots, j$, let \(P_k^T A P_k\) be SPD.
  Then, $\zeta_k = (-1)^{k-1} |\zeta_k|$, \(k = 1, \ldots, j\).
\end{lemma}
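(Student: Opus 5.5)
The plan is to work directly from the recurrence in \Cref{Algo:DIOM}: $\zeta_1 = \beta > 0$ and $\zeta_{k+1} = -\ell_{k+1,k}\zeta_k$ with $\ell_{k+1,k} = t_{k+1,k}/u_{k,k}$. Equivalently, forward substitution in $L_k z_k = \beta e_1$ gives the closed form already recorded in the proof of \Cref{prop:diom_res}:
\[
  \zeta_k = \beta \prod_{i=2}^{k}(-\ell_{i,i-1}).
\]
The claim then reduces to showing that every multiplier $\ell_{i,i-1}$, $i = 2,\ldots,j$, is strictly positive. If so, each factor $-\ell_{i,i-1}$ is negative, the product has sign $(-1)^{k-1}$, and $\zeta_k \neq 0$, so $\zeta_k = (-1)^{k-1}|\zeta_k|$. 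The base case $k=1$ is immediate because $\beta = \|r_0\|_2 > 0$, where we tacitly assume $r_0 \neq 0$ as elsewhere.

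Positivity of $\ell_{i,i-1} = t_{i,i-1}/u_{i-1,i-1}$ splits into two facts. First, $t_{i,i-1} > 0$ holds by construction in the Arnoldi step (Line~\ref{stp:end-arnoldi}) whenever the process has not broken down. Breakdown would give $r_{i-1} = 0$ by \Cref{eq:arnoldi_res}, after which there is nothing to prove. Second, $u_{i-1,i-1} > 0$. For this I use the discussion preceding the lemma. Since the $T_k$ are nonsingular, $U_k = D_k \hat U_k$. With $A$ symmetric, the identity $P_k^T A P_k = \hat U_k^{-T} D_k^{-1} L_k$ forces $\hat U_k = L_k^T$, so $P_k^T A P_k = D_k^{-1} = \operatorname{diag}(1/u_{1,1},\ldots,1/u_{k,k})$. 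The hypothesis that $P_k^T A P_k$ is SPD therefore gives $u_{i,i} > 0$ for all $i \le k$. Applying this with $k = j$ covers every index needed.

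The one point needing care is justifying $\hat U_k = L_k^T$. The hypothesis is stated for $P_k^T A P_k$, so we need $P_k$ to be well defined, meaning each $u_{k,k} \neq 0$. That is built into the SPD assumption, since it requires $P_k$ to exist. From there I would argue as follows. The matrix $\hat U_k^{-T} D_k^{-1} L_k$ equals the symmetric matrix $P_k^T A P_k$. It is also lower triangular with diagonal $D_k^{-1}$. A symmetric lower-triangular matrix is diagonal, so $P_k^T A P_k = D_k^{-1}$.

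This last route avoids invoking uniqueness of the LDU factorization, and it gives the diagonal entries directly. I expect this step to be the only real obstacle; the sign bookkeeping afterwards is routine.
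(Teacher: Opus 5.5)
Your proof is correct and follows the paper's argument. The paper also runs the sign alternation through $\zeta_{k+1} = -\ell_{k+1,k}\zeta_k$ (by induction rather than your equivalent closed-form product), using that $P_k^T A P_k$ SPD forces $u_{k,k} > 0$ and hence $\ell_{k+1,k} = t_{k+1,k}/u_{k,k} \ge 0$; your derivation of $P_k^T A P_k = D_k^{-1}$ (a symmetric lower-triangular matrix is diagonal) just spells out more explicitly what the paper takes from the preceding discussion, and strict positivity of $t_{i,i-1}$ is not needed, since $\ell_{i,i-1} \ge 0$ already yields $\zeta_k = (-1)^{k-1}|\zeta_k|$ even if $\zeta_k = 0$.
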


\begin{proof}
  For $k = 1$, $\zeta_1 = \beta = (-1)^0 \, \beta$.
  Assume that the statement holds for $k-1$, i.e., $\zeta_{k-1} = (-1)^{k-2} |\zeta_{k-1}|$.
  Then,
  \[
    \zeta_k = -\ell_{k+1,k} \, \zeta_{k-1}
    = -\ell_{k+1,k} \, (-1)^{k-2} |\zeta_{k-1}|
    = (-1)^{k-1} \, \ell_{k+1,k} |\zeta_{k-1}|. 
  \] 
  Because \(P_k^T A P_k\) is PD, each \(u_{k,k} > 0\), and hence, each $\ell_{k+1,k}\ge 0$.
  Thus, $\zeta_k = (-1)^{k-1} |\zeta_k|$.
\end{proof}

In \Cref{Algo:DIOM}, each \(p_k\DIOM \in \mathcal{K}_k(A, v_1)\), and hence, by the Galerkin condition,
\begin{equation}%
  \label{eq:p-orthogonal}
  r_j^T p_k\DIOM = 0 \quad j \geq 1, \quad k = 1, \ldots, j.
\end{equation}

\begin{theorem}\label{rkpk}
  For $k=1, 2, \ldots, j$, let \(P_k^T A P_k\) be SPD.
  Then,
  \[
    g_0^T p_1\DIOM = -\beta / u_{1,1},
    \mathhfill \text{and} \mathhfill
    g_k^T p_{k+1}\DIOM = 
    g_k^T p_{k+1}\DIOM = (-1)^{k-1} \dfrac{ t_{k+1, k} \, |\zeta_k| }{ u_{k,k} \, u_{k+1,k+1} } \quad (k \geq 1).
  \]
\end{theorem}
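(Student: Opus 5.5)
The plan is to use the closed form of the residual from \Cref{prop:diom_res}, together with the defining recurrence for \(p_{k+1}\DIOM\) in Line~\ref{stp:pk-diom} of \Cref{Algo:DIOM} and the Galerkin orthogonality~\eqref{eq:p-orthogonal}. Throughout, \(g_k = A x_k - b = -r_k\). By \Cref{prop:diom=cg} the DIOM residuals coincide with the FOM residuals in exact arithmetic, so \Cref{eq:arnoldi_res} and \Cref{prop:diom_res} apply to DIOM as well. In particular, \(v_{k+1}\) is orthogonal to \(v_1, \ldots, v_k\).

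For \(k = 0\) we have \(g_0 = -r_0 = -\beta v_1\). Line~\ref{stp:pk-diom} with \(k = 1\) gives \(p_1\DIOM = v_1 / u_{1,1}\). Hence \(g_0^T p_1\DIOM = -\beta / u_{1,1}\), using \(\|v_1\| = 1\).

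For \(k \geq 1\), write
\[
  p_{k+1}\DIOM = \Big(v_{k+1} - \sum_{i=m_{k+1}}^{k} u_{i,k+1} p_i\DIOM\Big) / u_{k+1,k+1}.
\]
Take the inner product with \(g_k = -r_k\). Each \(p_i\DIOM\) with \(i \leq k\) satisfies \(r_k^T p_i\DIOM = 0\) by~\eqref{eq:p-orthogonal}, so every term of the sum vanishes. This leaves
\[
  g_k^T p_{k+1}\DIOM = -\frac{r_k^T v_{k+1}}{u_{k+1,k+1}}.
\]
By \Cref{prop:diom_res}, \(r_k = -t_{k+1,k} (\zeta_k / u_{k,k}) v_{k+1}\), and \(\|v_{k+1}\| = 1\). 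Therefore
\[
  g_k^T p_{k+1}\DIOM = \frac{t_{k+1,k}\, \zeta_k}{u_{k,k}\, u_{k+1,k+1}}.
\]
Finally, \Cref{lem:zetak} gives \(\zeta_k = (-1)^{k-1} |\zeta_k|\), which yields the stated formula. The SPD hypothesis is used only to invoke \Cref{lem:zetak} and to guarantee \(u_{k,k}, u_{k+1,k+1} \neq 0\).

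The main obstacle is justifying \(r_k \perp p_i\DIOM\) for \(i \leq k\) in the DIOM setting, and the \(p_i\) belong to the incomplete recurrence. I would argue as follows. Each \(p_i\DIOM\) is a combination of \(v_1, \ldots, v_i\), which follows by induction from Line~\ref{stp:pk-diom}. In exact arithmetic with symmetric \(A\), DIOM produces the same Lanczos vectors as FOM, and \(r_k\) is parallel to \(v_{k+1}\). Since \(v_{k+1}\) is orthogonal to \(v_1, \ldots, v_k\), the orthogonality holds.

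One further point must be checked: the hypothesis on \(P_{k+1}^T A P_{k+1}\) must be available at index \(k+1\), because \(u_{k+1,k+1}\) appears in the formula. I would read the statement's range \(k = 1, \ldots, j\) as covering \(k + 1 \leq j\).
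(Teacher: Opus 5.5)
Your proposal is correct and follows the paper's proof essentially step for step. The case $k=0$ comes from $p_1\DIOM = v_1/u_{1,1}$ and $r_0=\beta v_1$; for $k\ge 1$ you eliminate the sum in Line~\ref{stp:pk-diom} via~\eqref{eq:p-orthogonal}, substitute the residual formula of \Cref{prop:diom_res}, and fix the sign with \Cref{lem:zetak}. Your intermediate identity $g_k^T p_{k+1}\DIOM = -r_k^T v_{k+1}/u_{k+1,k+1}$ has the correct denominator (the paper's display writes $u_{k,k}$ there, a typo), and your added justification of the orthogonality and your remark that the hypothesis must extend to index $k+1$ are reasonable refinements rather than a different route.
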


\begin{proof}
  Recall that $g_k = \nabla q(x_k) = A x_k - b = -r_k$ for all \(k \geq 0\).
  For $k=0$, $p_1\DIOM = v_1 / u_{1,1} = r_0 / (\beta u_{1,1}) = -g_0 / (\beta u_{1,1})$, and therefore $g_0^T p_1\DIOM = -\beta / u_{1,1}$.

  Consider now $k\ge 1$.
  As in the proof of~\Cref{prop:diom_res},
  \[
    r_k = -t_{k+1, k}\,\frac{ \zeta_k }{ u_{k,k} } \, v_{k+1}.
  \]
  In view of \Cref{lem:zetak} and~\eqref{eq:p-orthogonal},
  \begin{align*}
    g_k^T p_{k+1}\DIOM
    = - \frac{ r_k^T v_{k+1} }{ u_{k,k} }
    = t_{k+1, k} \, \frac{ \zeta_k }{ u_{k,k} \, u_{k+1, k+1} }
    = t_{k+1, k}\,\frac{ (-1)^{k-1} |\zeta_k| }{ u_{k,k} \, u_{k+1, k+1}}.
  \end{align*}
\end{proof}

\Cref{rkpk} states that $g_k^T p_{k+1}\DIOM$ is not always positive, which means that $p_{k+1}\DIOM$ is not a descent direction for every $k$.
The following theorem recovers the CG direction from DIOM($m$) vectors and coefficients.

\begin{theorem}\label{alpha_k}
  For $k=1, 2, \ldots, j$, let \(P_k^T A P_k\) be SPD.
  Then,
  \begin{equation}%
    \label{eq:star}
    d_k\PCG = \zeta_{k+1} \, u_{k+1,k+1} \, p_{k+1}\DIOM
    \quad \text{and}\quad \alpha_k\PCG = 1 / u_{k+1,k+1},
    \quad k = 0, \ldots, j-1.
  \end{equation}
\end{theorem}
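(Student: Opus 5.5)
The plan is to compare the two update formulas for the same iterate and then fix the proportionality constant from the exact-line-search identity \(g_k^T d_k\PCG = -g_k^T g_k\), which holds for PCG with \(H_0 = I\).

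First, I obtain colinearity of \(d_k\PCG\) and \(p_{k+1}\DIOM\). By \Cref{prop:diom=cg}, \(x_{k+1}\DIOM = x_{k+1}\PCG\) and \(x_k\DIOM = x_k\PCG\). Hence
\[
  \alpha_k\PCG d_k\PCG = x_{k+1} - x_k = \zeta_{k+1}\, p_{k+1}\DIOM .
\]
As long as \(g_k \neq 0\) and curvature is positive, \(\alpha_k\PCG \neq 0\). So there is a scalar \(c_k\) with \(d_k\PCG = c_k\, p_{k+1}\DIOM\) and \(\alpha_k\PCG c_k = \zeta_{k+1}\). It remains to show \(c_k = \zeta_{k+1} u_{k+1,k+1}\); then \(\alpha_k\PCG = 1/u_{k+1,k+1}\) follows at once from \(\alpha_k\PCG c_k = \zeta_{k+1}\), provided \(\zeta_{k+1}\neq 0\).

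Second, I rewrite the residual and the slope in terms of \(\zeta_{k+1}\). Lines~\ref{stp:lkp1k}--\ref{stp:zeta-k} of \Cref{Algo:DIOM} give \(\zeta_{k+1} = -t_{k+1,k}\zeta_k/u_{k,k}\). Substituting into \Cref{prop:diom_res} yields \(r_k = \zeta_{k+1} v_{k+1}\) for \(k \ge 1\), and \(r_0 = \beta v_1 = \zeta_1 v_1\) covers \(k = 0\). Since \(\|v_{k+1}\| = 1\), this gives \(\|g_k\|^2 = \zeta_{k+1}^2\); in particular \(\zeta_{k+1} \neq 0\) while \(g_k \neq 0\). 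Likewise, \Cref{rkpk} (in its signed form \(t_{k+1,k}\zeta_k/(u_{k,k}u_{k+1,k+1})\)) can be restated uniformly as
\[
  g_k^T p_{k+1}\DIOM = -\zeta_{k+1}/u_{k+1,k+1}, \qquad k \ge 0,
\]
where the case \(k=0\) is \(-\beta/u_{1,1}\).

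Third, I pin down \(c_k\). From \eqref{CG} with \(H_0 = I\) and \eqref{direcorhto}, \(g_k^T d_k\PCG = -g_k^T g_k\). Substituting \(d_k\PCG = c_k p_{k+1}\DIOM\) and using the two identities from the second step gives \(-c_k\zeta_{k+1}/u_{k+1,k+1} = -\zeta_{k+1}^2\). Dividing by \(\zeta_{k+1}\neq 0\) yields \(c_k = \zeta_{k+1}u_{k+1,k+1}\), which is the first claim of \eqref{eq:star}. As a consistency check, \(P_k^TAP_k = D_k^{-1}\) gives \((p_{k+1}\DIOM)^T A p_{k+1}\DIOM = 1/u_{k+1,k+1}\), and \eqref{exact-linesearch} then reproduces \(\alpha_k\PCG = 1/u_{k+1,k+1}\).

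The main obstacle is the index bookkeeping. The PCG direction \(d_k\) is indexed from \(0\), while the DIOM direction \(p_{k+1}\) and the entries \(u_{k+1,k+1}\), \(\zeta_{k+1}\) are indexed from \(1\), and the signs in \Cref{rkpk} involve \(\zeta_k\) rather than \(\zeta_{k+1}\). Converting everything to \(\zeta_{k+1}\) via line~\ref{stp:zeta-k} of \Cref{Algo:DIOM}, and handling \(k=0\) separately, is where errors are most likely.
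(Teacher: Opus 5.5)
Your proof is correct, but it fixes the proportionality constant differently from the paper. The paper takes the classical expression of the Lanczos tridiagonal \(T_{k+1}\) in terms of the CG scalars from Saad (\S6.7). It then matches entries of its LU factorization to read off \(\ell_{i+1,i} = \sqrt{\beta_{i-1}}\), \(u_{i,i+1} = \sqrt{\beta_{i-1}}/\alpha_{i-1}\) and \(u_{i+1,i+1} = 1/\alpha_i\). Only after that does it equate the updates \(\alpha_k\PCG d_k\PCG = \zeta_{k+1} p_{k+1}\DIOM\) to get the direction identity.

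You use the same update equation, but first, to obtain colinearity. You then determine the scalar from facts already established in the paper: \(r_k = \zeta_{k+1} v_{k+1}\), the slope \(g_k^T p_{k+1}\DIOM = -\zeta_{k+1}/u_{k+1,k+1}\), and \(g_k^T d_k\PCG = -\|g_k\|^2\). The slope identity also follows in one line from \(r_k = \zeta_{k+1} v_{k+1}\), Line~\ref{stp:pk-diom} and~\eqref{eq:p-orthogonal}. In your route, \(\alpha_k\PCG = 1/u_{k+1,k+1}\) is a consequence rather than the starting point.

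What your route buys is that it is self-contained and carries signs automatically through \(\zeta_{k+1}\). The paper's route implicitly relies on the convention \(v_{i+1} = (-1)^i r_i/\|r_i\|\) behind Saad's formula. It also needs \(\alpha_i\PCG > 0\) so that the off-diagonal entries \(\sqrt{\beta_{i-1}}/\alpha_{i-1}\) agree with the normalization \(t_{k+1,k} > 0\) in \Cref{Algo:DIOM}. The signed identities you use in fact require only nonsingularity of the \(T_k\), not positivity of the \(u_{k,k}\).

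What the paper's route buys is the entire LU factorization of \(T_{k+1}\) expressed in CG coefficients. That is more structural information than the theorem states.
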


\begin{proof}
  We know that $V_{k+1}^T A V_{k+1} = T_{k+1} = T_{k+1}^T \in \R^{(k+1)\times (k+1)}$ is tridiagonal.
  The elements of $T_{k+1}$ can be expressed in terms of the CG coefficients~\cite[\S\(6.7\)]{saad2003} as
  \[
    T_{k+1} \;=\;
    \begin{pmatrix}
      \frac{1}{\alpha_0} & \frac{\sqrt{\beta_0}}{\alpha_0} & & \\
      \frac{\sqrt{\beta_0}}{\alpha_0} & \frac{1}{\alpha_1}+\frac{\beta_0}{\alpha_0} & \ddots & \\
      & \ddots & \ddots & \frac{\sqrt{\beta_{k-1}}}{\alpha_{k-1}} \\
      & & \frac{\sqrt{\beta_{k-1}}}{\alpha_{k-1}} & \frac{1}{\alpha_k}+\frac{\beta_{k-1}}{\alpha_{k-1}}
    \end{pmatrix},
    \label{eq:Talphabeta}
  \]
  from which we dropped the superscripts \({}\PCG\) for readability.

  The LU factorization $T_{k+1} = L_{k+1} U_{k+1}$ with \emph{unit} lower bidiagonal $L_{k+1}$ and upper bidiagonal $U_{k+1}$.
  Matching the sub/super-diagonal entries yields
  \begin{equation}\label{eq:u_jl_j}
    \ell_{i+1,i}=\sqrt{\beta_{i-1}}, \qquad
    u_{i,i+1} = \sqrt{\beta_{i-1}} / \alpha_{i-1}, \qquad
    i = 1, \ldots, k.
  \end{equation}
  The diagonal entries satisfy, $u_{1,1} = 1 / \alpha_0$ and
  \[
    u_{j+1,j+1} + \ell_{j+1,j} \, u_{j,j+1}
    = \frac{1}{\alpha_{j}}+\frac{\beta_{j-1}}{\alpha_{j-1}}, \qquad
    j = 1, \ldots, k.
  \]
  Using~\eqref{eq:u_jl_j}, we obtain \(u_{j+1,j+1} = 1 / \alpha_{j}\) for \(j=1, \ldots, k\).

  The second part of the proof results from the above by equating the iterate updates in \Cref{Algo:PCG} and \Cref{Algo:DIOM} by virtue of \Cref{prop:diom=cg}.
  Specifically, for \(k = 0, \ldots, j-1\),
  \begin{align*}
    x_{k+1}\PCG = x_k\PCG + \alpha_k\PCG d_k\PCG = x_k\DIOM + \zeta_{k+1} p_{k+1}\DIOM.
  \end{align*}
\end{proof}

In CG, negative curvature can be detected by examining the sign of $\alpha_k\PCG$.
\Cref{alpha_k} thus implies that negative curvature can be detected in DIOM(\(m\)) by examining the sign of \(u_{k+1,k+1}\).
A strategy similar to that of the truncated conjugate gradient of \citep{steihaug-1983} can then be used with LBFGS(\(m\)) and DIOM(\(m\)) to compute trust-region steps.
We examine that strategy in the next section.

\section{Truncated methods for nonconvex optimization}

We return to the nonlinear, and possibly nonconvex problem~\eqref{P}, and focus on the use of LBFGS(\(m\)) and DIOM($m$) within a trust-region framework.
We wish to determine empirically whether they have advantages over PCG in floating-point arithmetic.

At iteration \(j\) of a typical Newton trust-region method for~\eqref{P}, we constuct a quadratic model~\eqref{quadratic} of the objective about the current iterate $z_j$ where \(c = f(z_j)\), \(b = -\nabla f (z_j)\), and \(A = \nabla^2 f (z_j)\), or possibly an approximation to the Hessian is the latter is not available to too costly to obtain.
The quadratic model is then approximately minimized while ensuring that \(x\) satisfies the trust-region constraint \(\|x\|_2 \leq \Delta_j\) where \(\Delta_j > 0\) is the trust-region radius.
For reference, we state the subproblem as
\begin{equation}\label{eq:tr-subproblem}
  \minimize{x \in \R^n} \ f(z_j) + \nabla f(z_j)^T x + \tfrac{1}{2} x^T A_j x
  \quad \st \quad
  \|x\|_2 \leq \Delta_j,
\end{equation}
where \(A_j = A_j^T \approx \nabla^2 f(z_j)\).
Once a step \(x_j\) has been obtained, the decrease in~\eqref{quadratic} is compared the actual decrease in \(f\) to accept or reject the step, and \(\Delta_j\) is updated accordingly.
\Cref{Algo:TR} summarizes the main features of the procedure.
We refer the interested reader to the comprehensive treatment \citep{conn-gould-toint-2000} for complete details.

\begin{algorithm}[ht]
  \caption{Newton trust-region method}
  \label{Algo:TR}
  \begin{algorithmic}[1]
    \State Choose $z_0 \in \R^n$, \(\Delta_0 > 0\), \(\varepsilon > 0\), \(0 < \eta_1 < \eta_2 < 1\).
    \State $j = 0$
    \While{$\|\nabla f(z_j)\| > \varepsilon$}
    \State%
    \label{stp:compute-tr-step}%
    Compute \(x_j\) as an approximate minimizer of~\eqref{eq:tr-subproblem}.
    \State $\rho = \dfrac{ f(z_j) - f(z_j + x_j) }{ q(0) - q(x_j) }$ 
    \If{$\rho < \eta_1$}
    $\Delta \gets \Delta/4$ \Comment{poor adequacy: reduce the radius}
    \ElsIf{$\rho \ge \eta_2$}
    $\Delta \gets 2\Delta$ \Comment{very good adequacy: increase the radius}
    \EndIf
    \If{$\rho \ge \eta_1$}
    $z_{j+1} = z_j + x_j$ \Comment{accept step if adequacy is sufficient}
    \Else \
    \(z_{j+1} = z_j\) \Comment{reject step if adequacy is insufficient}
    \EndIf
    \State $j \gets j+1$
    \EndWhile
    \State \Return $z_j$
  \end{algorithmic}
\end{algorithm}

Importantly, on Line~\ref{stp:compute-tr-step} of \Cref{Algo:TR}, \(x_j\) need not be an exact solution of~\eqref{eq:tr-subproblem}, but merely induce \emph{sufficient decrease} in the model while remaining in the trust region---see \citep{conn-gould-toint-2000} for what sufficient decrease means.
For our purposes, suffice it to say that the truncated conjugate gradient method of \citet{steihaug-1983} achieves sufficient decrease at the very first iteration, and is one of the most widely-used methods to compute a step in trust-region methods when the trust region is defined in the Euclidean, or in an elliptic, norm.

Because we seek to compute a step from \(z_j\), which lies at the center of the trust-region, it is common to initialize an iterative method from \(x_0 = 0\).
\citet{steihaug-1983} establishes that, as long as positive curvature directions are generated, i.e., \((d_k\PCG)^T A_j d_k\PCG > 0\), the model decreases monotonically along the path connecting the iterates $\{x_k\}_{k \geq 0}$, and that the iterates move further away the center of the trust region at each iteration.
More precisely, $q_j(x_{k+1}) \leq q(x_k + \tau d_k\PCG)$ for all $\tau \in [0,1]$ and $\|x_{k+1}\|_2 \geq \|x_k\|_2$ provided that $x_0 = 0$.
Thus as long as positive curvature directions are generated, either we find a minimum of \(q\) inside the trust region in at most \(n\) iterations, or there is an iteration \(k\) such that \(\|x_k\|_2 < \Delta_j \leq \|x_{k+1}\|_2\).
Because of the monotonicity of \(q\), we can simply stop at the point where the segment connecting \(x_k\) to \(x_{k+1}\) meets the trust-region boundary.
If nonpositive curvature should be detected at iteration \(k\), i.e., \((d_k\PCG)^T A_j d_k\PCG \leq 0\), we can see that \(q\) continues to decrease along \(d_k\PCG\), and we can safely follow that direction until we hit the trust-region boundary.

By the results of the previous sections, those properties continue to hold for all methods of the Broyden class, to LBFGS(\(m\)), and to DIOM(\(m\)) applied to~\eqref{eq:tr-subproblem}.
In PCG and LBFGS(\(m\)), nonpositive curvature is detected by checking the sign of \(\alpha_k\), and in DIOM(\(m\)), that of \(u_{k+1,k+1}\).
\Cref{Algo:LS-TRNC} provides the procedure for LBFGS($m$), and \Cref{Algo:TR-DIOM} for DIOM($m$).


\begin{algorithm}[ht]
  \caption{LBFGS($m$) for trust-region subproblems}
  \label{Algo:LS-TRNC}
  \begin{algorithmic}[1]
    \State Set $A \gets \nabla^2 f(z_j)$, $b \gets -\nabla f(z_j)$, $\Delta_j > 0$, $H_0 \gets I$, $x_0 \gets 0$, $g_0 \gets -b$
    \For{$k = 0,1,\ldots$}
    \State $d_k \gets -H_k g_k$
    \State Compute $\tau > 0$ such that $\|x_k +\tau s_k\|_2 = \Delta_j$ \Comment{step size to the boundary}
    \State $\alpha_k = - g_k^T d_k / (d_k^T A d_k)$
    \If{$d_k^T A d_k \le 0$ or $\alpha_k > \tau$} \Return $x_k + \tau d_k$ \Comment{step to  boundary}
    \EndIf
    \State $s_k \gets \alpha_k d_k$
    \State $x_{k+1} = x_k + s_k$
    \State $y_k \gets \alpha_k A d_k$ \Comment{$y_k = g_{k+1} - g_k = A s_k$}
    \State $g_{k+1} = g_k + y_k$
    \State $H_{k+1} \gets \mathrm{update}(s_k, y_k, H_k)$
    \EndFor
  \end{algorithmic}
\end{algorithm}

\begin{algorithm}[ht]
  \caption{DIOM(\(m\)) for trust-region subproblems}
  \label{Algo:TR-DIOM}
  \begin{algorithmic}[1]
    \State Set $A \gets \nabla^2 f(z_j)$, $b \gets -\nabla f(z_j)$, $\Delta_j > 0$, $x_0\DIOM \gets 0$, $v_1 \gets b / \|b\|$
    \For{$k = 1, 2, \ldots$}
    \State \(m_k = \max(1, \, k - m + 1)\)
    \State Perform Lines~\ref{stp:begin-arnoldi}--\ref{stp:ukk} of \Cref{Algo:DIOM}
    \State $d_k\DIOM = \zeta_k \Big(v_k - \sum_{i = m_k}^{k-1} u_{i,k} p_i\DIOM\Big)$ \Comment{avoid risking a division by zero}
    \State Compute $\tau > 0$ such that $\|x_{k-1}\DIOM +\tau d_k\DIOM\|_2 = \Delta_j$.
    \If{$u_{k,k} \le 0$ or $1 / u_{k,k} > \tau$} \Return $x_{k-1}\DIOM + \tau d_k\DIOM$ \Comment{step to boundary}
    \EndIf
    \State $x_{k}\DIOM = x_{k-1}\DIOM + d_k\DIOM / u_{k,k}$
    \State $\ell_{k+1,k} = t_{k+1,k} / u_{k,k}$ 
    \State $\zeta_{k+1} = -\ell_{k+1,k} \, \zeta_k$ 
    \EndFor
  \end{algorithmic}
\end{algorithm}


We summarize the per-iteration computational requirements of the three methods in \Cref{tab:complexity-cg-diom-lbfgs}.
PCG stores $x_k$, $g_k$, and $d_k\PCG$.
LBFGS($m$) stores $x_k$, $g_k$, and the $m$ most recent pairs $\{s_i, y_i\}$. For LSR1($m$), the update formula is
\[
  H_{k+1}^m = H_k^0 + \sum_{i = k-m+1}^{k} \frac{ z_i z_i^T }{ s_i^T z_i }
  \quad \text{with} \quad z_i = s_i - H_i^m y_i,
\]
so that LSR1($m$) stores $x_k$ and $g_k$, together with the $m$ most recent vectors $z_i$.
DIOM($m$) stores $\{x_k\}$, $m$ recent Krylov basis vectors $v_i$, and $m$ update vectors $p_i\DIOM$.
We do not count storage for scalars.
As expected, PCG has the smallest memory footprint because conjugacy is maintained through short recurrences, at least in exact arithmetic.
Each method performs one operator-vector product with \(A\) per iteration.
For LBFGS($m$), the additional work arises from Strang's two-loop recursion used to apply the inverse-Hessian approximation to a vector, assuming $H_0 = I$ \citep{nocedal-1980}, and from computing $s_k^T y_k$. 
For LSR1($m$), each vector $z_i$ requires an application of the previous limited-memory operator $H_i^m$ to $y_i$.
This product is computed at the time of insertion of \(z_i\) into memory.
Thus, at iteration $k$, only the new product $H_k^m y_k$ is required to form \(z_k\).
This product requires applying the $m$ stored $z_i$ to $y_k$, at a cost of $2mn$ scalar multiplications, and forming $s_k^T z_k$, which requires $n$ scalar multiplications and additions.
Finally, applying $H_{k+1}^m$ to any vector \(v\) requires $2mn$ scalar multiplications, including those needed to compute $z_i^T v$ and $(z_i^T v) z_i$.
The additional work per iteration is therefore $4mn + n$ scalar multiplications.
For DIOM($m$), they arise from the incomplete orthogonalization process on Lines~\ref{stp:begin-arnoldi}--\ref{stp:end-arnoldi}, the LU factorization update on Lines~\ref{stp:begin-lu}--\ref{stp:end-lu} and~\ref{stp:lkp1k}--\ref{stp:zeta-k}, and the update of $p_k\DIOM$ on Line~\ref{stp:pk-diom} of \Cref{Algo:DIOM}.

\begin{table}[ht]
  \centering
  \footnotesize
  \caption{Per-iteration complexity comparison for CG, LBFGS($m$), and DIOM($m$).}
  \label{tab:complexity-cg-diom-lbfgs}
  \begin{tabular}{lcc}
    \hline
    Method & Stored vectors & Additional work per iteration\\
    \hline
    CG & $3$ &  \\
    LBFGS($m$) & $2 + 2 m$ & $4mn + n$ scalar multiplications\\
    LSR1($m$) & $2 + \phantom{2} m$ & $4mn + n$ scalar multiplications\\
    DIOM($m$) & $1 + 2m$ & $3mn + m$ scalar multiplications\\
    \hline
  \end{tabular}
\end{table}

\section{Numerical experiments}

In this section, we first illustrate the practical behavior of PCG, LBFGS($m$), LSR1(\(m\)) and DIOM($m$) on positive-definite linear systems to assess their convergence behavior in finite precision.
The PCG and DIOM($m$) implementations are those of \citep{montoison-orban-2023}, while the LBFGS($m$) and LSR1(\(m\)) implementations are available at \URL{https://github.com/oihanc/Krylov.jl/tree/lsr1}; the corresponding LBFGS and LSR1 operators are implemented in \citep{orban2019linearoperators}.
The DIOM(\(m\)) implementation was modified to include the detection and treatment of nonpositive curvature and trust-region constraint. 


Next, we report results on two nonlinear problems: data assimilation and classification, using the numbers of objective evaluations, gradient evaluations, and Hessian-vector products as performance measures.
We use the trust-region framework implemented in the \texttt{trunk} solver of \citep{migot2026jsosolvers}.

\subsection{Behavior on Positive-Definite Systems}
\phantomsection  

In this section, we examine the behavior of CG, LBFGS, and DIOM for solving linear systems with positive-definite matrices with \(m = 50\) and \(m = n\).
\Cref{tab:pd-matrices} summarizes the test matrices used. In all experiments, we set $x_0 = 0$ and $b = 100[1,\ldots,1]^T$.
\Cref{fig:pd-matrices} shows plots of the relative residual and the quadratic objective value.
We use \(64\)-, \(128\)-, and \(256\)-bit floating-point arithmetic.

\begin{table}[ht]
  \centering
  \footnotesize
  \caption{Positive-definite test matrices used in the experiments.}
  \label{tab:pd-matrices}
  \begin{tabular}{lccc}
    \hline
    & gr\_30\_30 & nos5 & 494\_bus \\
    \hline
    Dimension $n$ & 900 & 468 & 494 \\
    Conditioning $\kappa_2(A)$ & 1.945e+02 & 1.100e+04 & 2.415e+06 \\
    \hline
  \end{tabular}
\end{table}

\begin{figure}[hp]
  \includegraphics[width=\textwidth]{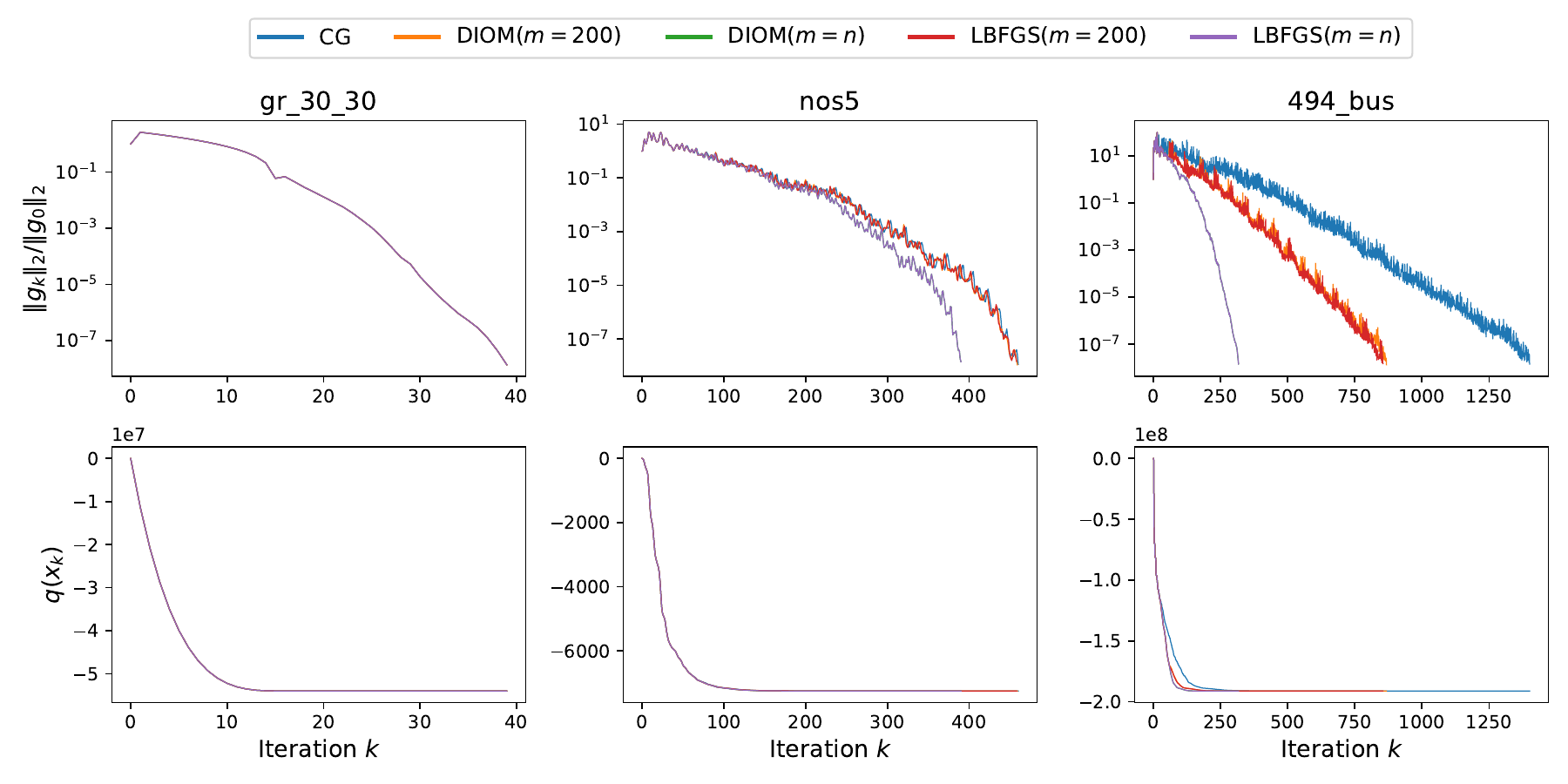}
  \\
  \includegraphics[width=\textwidth]{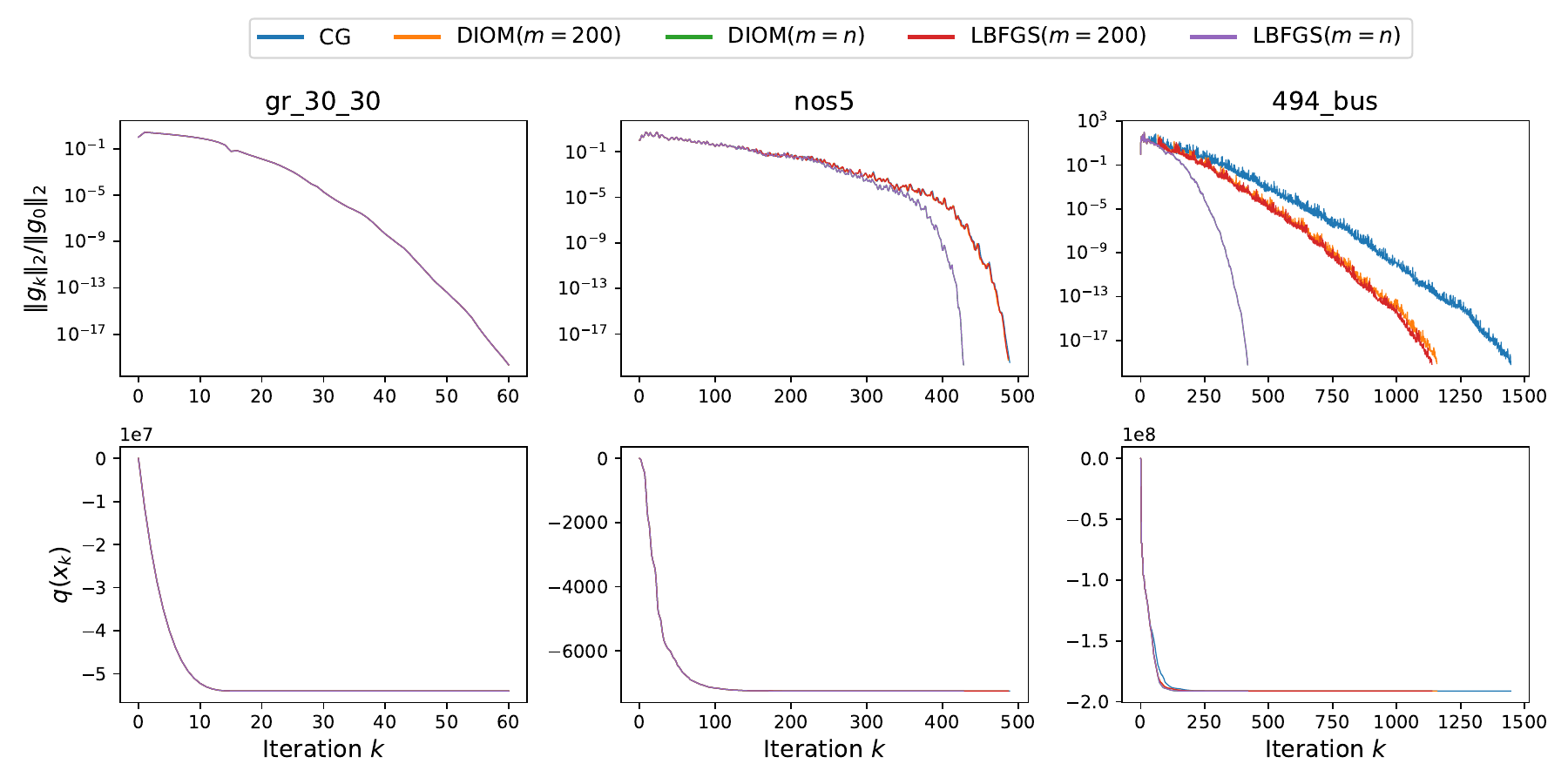}
  \\
  \includegraphics[width=\textwidth]{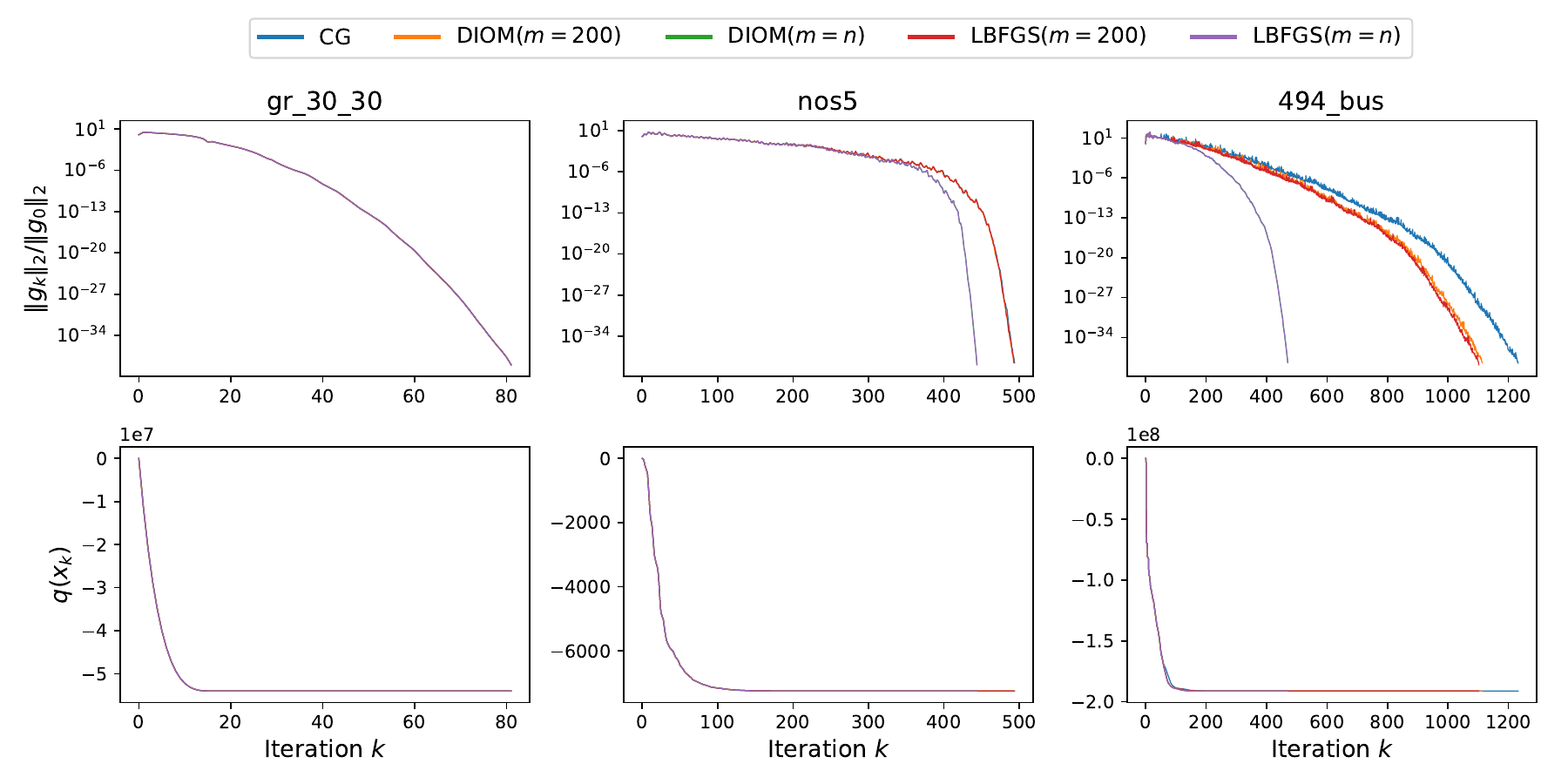}
  \caption{%
    \label{fig:pd-matrices}
    Convergence curves on the test systems of \Cref{tab:pd-matrices} in \(64\)-bit (top), \(128\)-bit (middle), and \(256\)-bit (bottom) floating-point arithmetic.
  }
\end{figure}

These experiments highlight differences in the numerical behavior of the methods in finite precision. In particular, LBFGS and DIOM tend to exhibit more robust convergence behavior than CG on ill-conditioned systems provided \(m\) is sufficiently large.
In the plots, LBFGS(\(m\)) and and DIOM(\(m\)) are visually superposed and perform nearly identically.
Only on the well-conditioned system \texttt{gr\_30\_30} are all the curves superposed.
On the intermediate system \texttt{nos5}, we only see an advantage with DIOM and LBFGS for \(m = n\).
However, even as floating-point accuracy increases, PCG trails behind on the worst-conditioned system \texttt{494\_bus}, even though its condition number can be considered moderate.
This suggests that memory can be an effective mechanism for mitigating the effects of loss of orthogonality on ill-conditioned problems.

The results in~\Cref{fig:pd-matrices-lsr1}, where we plot the same quantities as in~\Cref{fig:pd-matrices} for \(64\)-bit floating-point arithmetic, indicate that full-memory LSR1$(m=n)$ performs well across the different test matrices and is comparable to LBFGS and DIOM with $m=n$.
However, for $m=25, 50, 100, 200$, LSR1 deteriorates once the iteration count exceeds \(m\): even on the well-conditioned system \texttt{gr\_30\_30}, with \(m = 25\), a clear discrepancy between CG and LSR1 appears after about \(25\) iterations.
The same phenomenon occurs for other memory sizes.
This suggests that LSR1 may not generate directions parallel to those of CG, a point that deserves further theoretical investigation.

\begin{figure}[ht]
  \includegraphics[width=\textwidth]{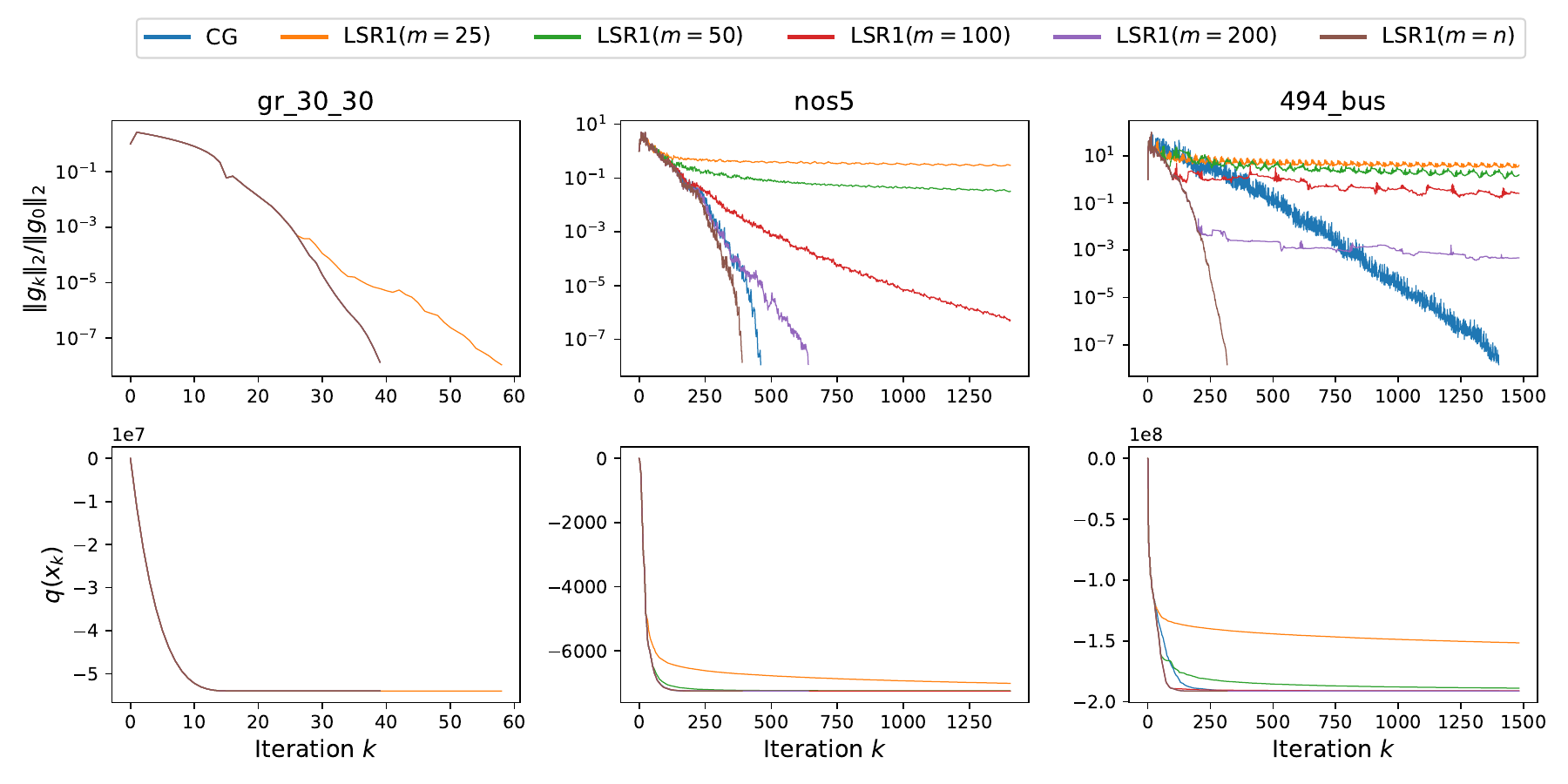}
  \caption{%
    \label{fig:pd-matrices-lsr1}
    Convergence curves for LSR1(\(m\)) and CG on the test systems of \Cref{tab:pd-matrices} in \(64\)-bit floating-point arithmetic.
  }
\end{figure}



\subsection{A data-assimilation problem}%
\label{sec:assimilation}

We evaluate the trust-region framework with CG, LBFGS, and DIOM on a nonlinear weighted least-squares problem arising in data assimilation:
\begin{equation}%
  \label{Prob:nonlinear}
  \minimize{z_0 \in \R^n} f(z_0) = \tfrac{1}{2} \|z_0 - z_b\|_{B^{-1}}^2 + \tfrac{1}{2} \sum_{i=1}^{N_t} \|y_i - \mathcal{H}_i(\mathcal{M}_{t_0, t_i}(z_0))\|_{R_i^{-1}}^2.
\end{equation}
Here, $z_0 = z(t_0)$ denotes the state at the initial time $t_0$, $z_b \in \R^n$ is the background (prior) estimate, and $y_i \in \R^{m_i}$ denotes the observation vector at time $t_i$, for $i = 1, \ldots, N_t$. The nonlinear model $\mathcal{M}_{t_0,t_i}$ propagates $z_0$ from $t_0$ to $t_i$, while $\mathcal{H}_i$ maps the propagated state into the observation space. The SPD covariance matrices $B \in \R^{n \times n}$ and $R_i \in \R^{m_i \times m_i}$ are associated with the background and observation errors, respectively.

In our numerical experiments, we use the Lorenz-96~\citep{lorenz1996predictability} model as the physical dynamical system $\mathcal{M}_{t_0, t_i}$---a common reference model in data assimilation. The observation operator $\mathcal{H}$ is a uniform selection operator, i.e, $\mathcal{H}(x)$ extracts a subset of $x$ that is uniformly selected. We consider
$B = \sigma_b I_n$ with $\sigma_b= 0.8$, $R_1 = R_{2} = \sigma_r^2 I_m$ with $\sigma_r = 0.2$.
We choose $n = 10,000$, $N_t = 2$, and $m_1 = m_2 = 500$. \Cref{tab:assimilation_res} summarizes the number of objective, gradient, and Hessian-vector evaluations, reported as obj\_eval, grad\_eval, and hprod\_eval, respectively.

\begin{table}[ht]
  \centering
  \footnotesize
  \caption{%
    Statistics for the data assimilation problem~\eqref{Prob:nonlinear}.
    Best values are boldfaced.
  }
  \label{tab:assimilation_res}
  \begin{tabular}{l@{\hspace{2pt}}rrrrr}
    \hline
          & $m$   & obj\_eval & grad\_eval & hprod\_eval & time (s) \\
    \hline
    CG    &       & 333 & \textbf{61}   & 1083  & 151.8    \\
    LBFGS & $50$  & \textbf{293} & 63   & \textbf{900}   & \textbf{116.7}    \\
    DIOM  & $50$  & 334 & 68   & 1033  & 128.3    \\
    LBFGS & $100$ & \textbf{293} & \textbf{61}   & 951   & 128.7    \\
    DIOM  & $100$ & 337 & 64   & 1005  & 132.9   \\
    \hline
  \end{tabular}
\end{table}

The results indicate that LBFGS is the most effective subsolver for~\eqref{Prob:nonlinear}, and $m=50$ performs best, requiring the fewest objective evaluations and Hessian-vector products while also achieving the shortest runtime. DIOM also reduces the number of Hessian-vector products relative to CG and yields a shorter runtime.

\subsection{A binary classification problem}
\label{sec:classification}

We benchmark the LBFGS and DIOM subsolvers within the trust-region framework on a binary classification problem derived from the MNIST dataset. The goal is to classify images of the handwritten digits $1$ and $7$. Let $A \in \R^{n \times N}$ denote the data matrix, whose columns are vectorized images, and $b \in \{-1,1\}^N$ denote the corresponding label vector, where $b_i = 1$ for digit $1$ and $b_i = -1$ for digit $7$. Here, $n$ is the number of pixels per image and $N$ is the number of samples. The problem is formulated as
\begin{equation}%
  \label{Prob:classification}
  \minimize{z \in \R^n} \; f(z) = \tfrac{1}{2} \| \mathbf{1} - \tanh(b \odot (A^T z)) \|^2,
\end{equation}
where $\odot$ is the elementwise product, and \(\mathbf{1}\) is the vector of ones. This formulation encourages $b_i (A_i^T z)$ to be positive when sample $i$ is correctly classified. We have $N = 13,007$, and the images have resolution $28 \times 28$, so $n = 784$. \Cref{tab:classification_res} reports our results. Overall, DIOM($m=50$) is the fastest, while requiring the fewest Hessian-vector products. The number of objective and gradient evaluations are identical across all methods, suggesting that DIOM solves the quadratic subproblems more efficiently.

\begin{table}[ht]
  \centering
  \footnotesize
  \caption{%
    Statistics for the data assimilation problem~\eqref{Prob:classification}.
    Best values are boldfaced.
  }
  \label{tab:classification_res}
  \begin{tabular}{l@{\hspace{2pt}}rrrrr}
    \hline
    & $m$        & obj\_eval & grad\_eval & hprod\_eval & time (s) \\
    \hline
    CG    &         & 48 & 26 & 386 & 9.69687 \\
    LBFGS & $50$  & 48 & 26 & 357 & 8.46662 \\
    DIOM  & $50$  & 48 & 26 & \textbf{336} & \textbf{8.46505} \\
    LBFGS & $100$ & 48 & 26 & 357 & 9.36065 \\
    DIOM  & $100$ & 48 & 26 & \textbf{336} & 9.47864 \\
    \hline
  \end{tabular}
\end{table}

\section{Discussion}

While it is well known that partial reorthogonalization in PCG improves its behavior on ill-conditioned systems \citep{horst-1984}, it is interesting that methods that are much closer to optimization, such as LBFGS, also coincide with PCG in exact arithmetic and provide a form of stabilization by way of increased memory.
DIOM can be viewed as indiscriminate partial reorthogonalization, in the sense that it always occurs, and follows naturally from the \citeauthor{arnoldi-1951} process.
It coincides with CG at one end of the spectrum, \(m = 1\), and with FOM at the other, \(m = \infty\).
In exact arithmetic, LBFGS and DIOM coincide with CG for any value of \(m \geq 1\).
The present research and the work of \citep{ek-forsgren-2021} suggest investigating other limited-memory methods of the Broyden class to solve quadratic problems, e.g., in trust-region and line search methods.

\citet{dahito-orban-2019} show that the conjugate residuals method \citep{hestenes-stiefel-1952}, and hence MINRES \citep{paige-saunders-1975}, are appropriate to solve trust-region subproblems, can detect nonpositive curvature, and sometimes outperform CG.
MINRES is based on the \citeauthor{lanczos-1950} process, and has a counterpart based on the \citeauthor{arnoldi-1951} process: GMRES \citep{saad-schultz-1986}.
GMRES admits a limited-memory variant DQGMRES \citep{saad-wu-1996}.
We may expect that DQGMRES will provide added robustness by way of memory and reorthogonalization, and we intend to study its properties in future research on ill-conditioned nonlinear problems. 

The numerical behavior of LSR1 is more delicate. In our experiments, full-memory LSR1 is competitive with the other full-memory methods, but the loss of old curvature pairs appears to have a structural effect: once information older than the memory window is discarded, the method may no longer produce directions parallel to those of CG, even on well-conditioned problems.
Understanding whether this deterioration can be explained by the indefiniteness of the LSR1 inverse approximations, by a vanishing LSR1 denominator, or by a loss of conjugacy is an interesting question.

\small
\bibliographystyle{abbrvnat}
\bibliography{abbrv,bfgs-tr}
\normalsize

\vfill

\tableofcontents

\end{document}